\newcommand{\cls}[1]{{\color{red} #1}} 
\newtheorem{thm}{Theorem}[section]
\newtheorem{propn}[thm]{Proposition}
\newtheorem{lemma}[thm]{Lemma}
\theoremstyle{remark}
\newtheorem{remark}[thm]{Remark}
\newtheorem{corlly}[thm]{Corollary}
\newtheorem{example}{Example}
\theoremstyle{definition}
\newtheorem{ass}[thm]{Assumption}
\DeclareMathOperator{\sigmaf}{\mathscr{F}}
\DeclareMathOperator{\prob}{\mathbb{P}}
\DeclareMathOperator{\exptn}{\mathbb{E}}
\DeclareMathOperator*{\argmax}{arg\,max}
\newcommand{\bs}[1]{{\boldsymbol #1 }}
\newcommand{\asarrow}{\:\to_{\rm a.s.}\:}
\title{A stochastic optimization algorithm for revenue maximization in a service system with balking customers}
\author{S. A. Bodas, H. Honnappa, M. Mandjes, L. Ravner}
\date{\today}
\numberwithin{equation}{section}
\begin{document}
\maketitle

\begin{abstract}
\noindent This paper analyzes a service system modeled as a single-server queue, in which the service provider aims to dynamically maximize the expected revenue per unit of time. This is achieved by constructing a stochastic gradient descent algorithm that dynamically adjusts the price. A key feature of our modeling framework is that customers may choose to {\it balk} — that is, decide not to join — when facing high congestion. A notable strength of our approach is that the revenue-maximizing algorithm relies solely on information about {\it effective} arrivals, meaning that only the behavior of customers who choose not to balk is observable and used in decision-making. This results in an elaborate interplay between the pricing policy and the effective arrival process, yielding a non-standard state dependent queueing process. An important contribution of our work concerns a novel Infinitesimal Perturbation Analysis (IPA) procedure that is able to consistently estimate the stationary effective arrival rate. This is further leveraged to construct an iterative algorithm that converges, under mild regularity conditions, to the optimal price with provable asymptotic guarantees.

\medskip

\noindent
{\sc Keywords.} Online revenue maximization $\circ$ Unobserved balking $\circ$ Infinitesimal Perturbation Analysis $\circ$ Regret analysis

\medskip

\noindent
SAB is with Korteweg-de Vries Institute for Mathematics, University of Amsterdam, Amsterdam, The Netherlands.
HH is with {Edwardson} School of Industrial Engineering, Purdue University, West Lafayette IN, United States. 
MM is with the Mathematical Institute, Leiden University,
The Netherlands; MM is also affiliated with (a)~Korteweg-de Vries Institute for Mathematics, University of Amsterdam, Amsterdam, The Netherlands, b)~E{\sc urandom}, Eindhoven University of Technology, Eindhoven, The Netherlands, (c)~Amsterdam Business School, Faculty of Economics and Business, University of Amsterdam, Amsterdam, The Netherlands.
LR is with Department of Statistics, University of Haifa,
Israel.

\medskip

\noindent SAB's and MM's  research has been funded by the European Union’s Horizon 2020 research and innovation programme under the Marie Sklodowska-Curie grant agreement no.\ 945045 \includegraphics[height=1em]{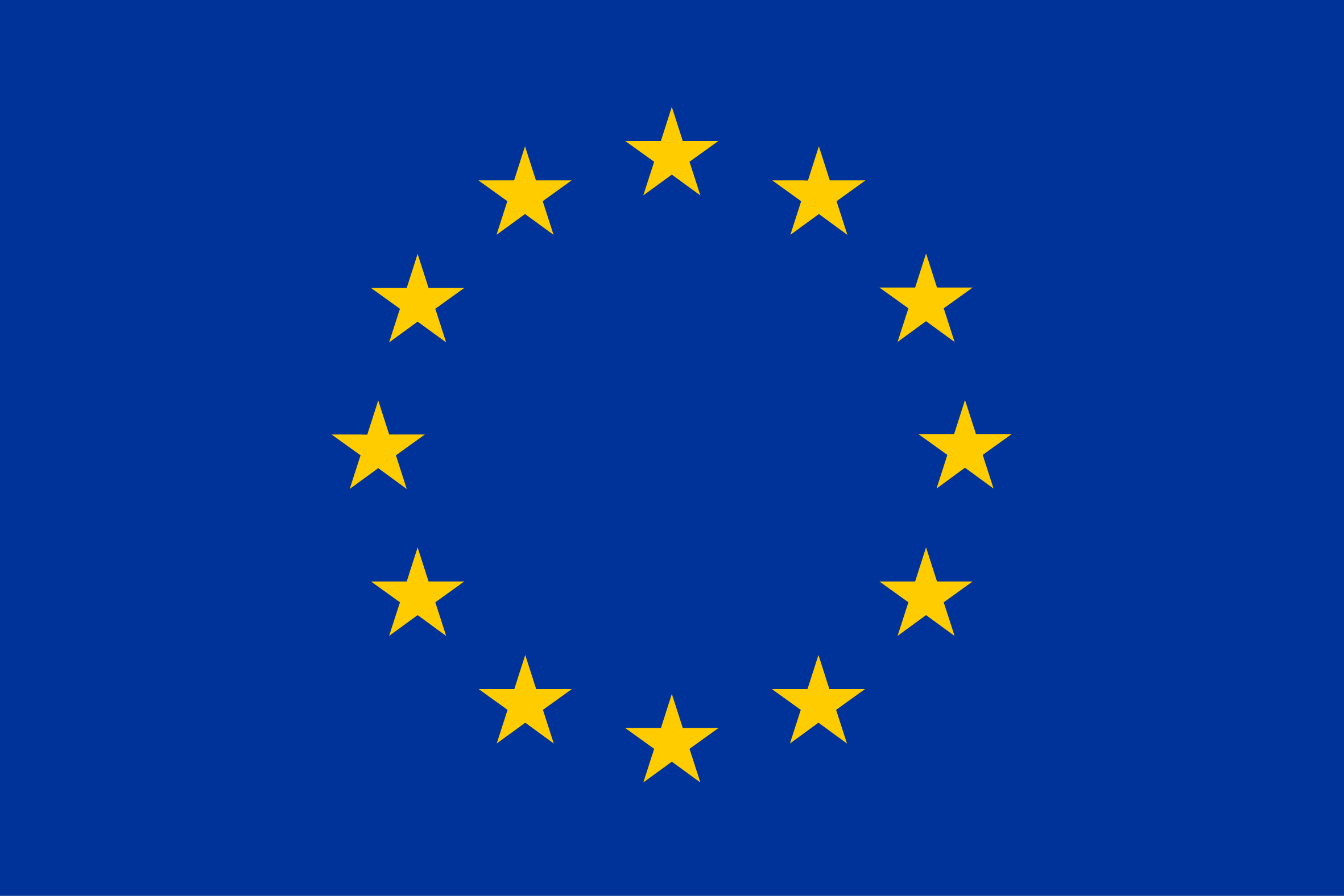}, and by the NWO Gravitation project {\sc Networks} under grant agreement no.\ 024.002.003.  LR's research was supported by the Israel Science Foundation (ISF), grant no.\ 1361/23. This work was partly conducted while HH was visiting Institute for Advanced Study, University of Amsterdam, the Netherlands. 
\end{abstract}

\newpage

\section{Introduction}

In this paper, we examine a service facility modeled as a single-server queue, where customers arrive in response to the price set by the provider. The service provider’s objective is to determine the optimal price, i.e., the price that maximizes the expected revenue per unit of time. Achieving this goal is particularly challenging, as it requires understanding the intricate relationship between pricing, customer demand, system congestion, and the resulting impact on revenue performance.

\medskip

{\it Offline and online revenue maximization~---} To put our work into perspective, we first provide a high-level categorization of the existing literature on revenue optimization. Broadly speaking, two dominant approaches can be identified, that one could refer to as the {\it offline} and {\it online} paradigms.
\begin{itemize}
    \item[$\circ$]
Offline revenue optimization is typically studied using a demand curve $d(p)$ that depends on the price $p$. The standard static optimization problem takes the form $\max_{p \geq 0} p\, d(p)$.
However, in the context of service systems, an additional layer of complexity arises: congestion can also reduce demand. 
Specifically, users may decide to balk if the (expected) delay exceeds their individual utility thresholds.
In stylized Markovian models, one can explicitly derive the revenue-maximizing price. A canonical example is Naor’s model \cite{Naor1969}, which features an observable queue and linear waiting costs. 
In such settings, both the {\it effective demand} and the underlying {\it effective arrival rate} $\lambda(p)$ depend on the price and the stationary queue-length distribution.
The problem of monopolistic pricing in congested systems has been studied extensively under various modeling assumptions; we refer the reader to \cite{HassinHaviv2003,ChenFrank2004,Hassin2016} for comprehensive reviews.
In studies where congestion is modeled through a queueing mechanism, the objective function often includes a deduction for queueing-related costs — see, for example, \cite{LeeWard2014, LeeWard2019}, in which a term proportional to the mean stationary queue length is subtracted. These analyses typically assume that the primitives of the underlying queueing model (in particular the service-time distributions) are known.
\item[$\circ$]
In the dynamic pricing literature, the price is iteratively adjusted until, with some level of confidence, it can be said that the revenue-maximizing price has been found.
This line of work typically assumes uncertainty in the demand curve, which must be learned, and in addition that the queueing primitives are not, or partly, known.
Such a setting naturally calls for online algorithms that learn and optimize simultaneously; we refer to, e.g., \cite{vdBoer2015} for a survey of this strand of the literature. 
An added layer of complexity arises when customers are delay-sensitive, for instance, when some decide to balk upon observing excessive congestion.
\end{itemize}

Dynamic revenue maximization problems come in various flavors. First, they may differ in the components included in the revenue function. 
While the core component reflects income from service, many formulations also include a penalty term for performance degradation — commonly modeled as a function of mean queueing delay.
Second, there are differences in the definition of the {\it decision space}. In some studies, the objective is solely to determine the optimal price, while others also optimize over additional parameters such as the service rate or the number of servers.
Third, there are several variants based on the type of information available to the service provider when making pricing decisions (e.g., the customers' arrival times, queueing delays, etc.).

Both offline and online approaches have distinct strengths and limitations. We begin by outlining key considerations related to the offline setting.
(1)~In this setting, it is typically assumed that the revenue can be expressed as a function of price. While evaluating this objective is tractable in simple queueing models, often relying on exponential inter-arrival and service times, it becomes considerably more challenging when incorporating more realistic features, such as general service-time distributions or realistic customer behaviors like balking upon encountering excessive workloads at arrival.
(2)~The offline approach also requires knowledge of the queueing model’s primitives and the demand curve. While certain customer parameters can often be estimated from historical data (e.g., the service-time distribution), the demand curve is more difficult to infer, as it requires observations across a broad range of pricing scenarios. An important but often overlooked issue is {\it model uncertainty}: the effects of estimation errors are typically ignored.
(3)~When revenue cannot be expressed explicitly, the offline approach frequently resorts to {\it heavy-traffic approximations}, which involve scaling up arrival rates and service capacity. This can make the optimal price sensitive to system size, entailing that in contexts where such scaling is inappropriate, the accuracy of these approximations degrades.

We now discuss the advantages and challenges of online pricing approaches.
(1)~The goal is to develop algorithms with low {\it regret}, defined as the cumulative revenue loss relative to the optimal fixed price in hindsight. A key strength of such algorithms is that they operate without requiring an explicit revenue function or prior parameter estimation.
Instead, prices are adjusted dynamically based on observed system behavior.
(2)~The central challenge lies in devising a price update mechanism. A natural strategy is to define {\it cycles} within which the price remains fixed, and based on the observed behavior adapt the price.
This setting can be framed as a multi-armed bandit problem, wherein each arm corresponds to a different price, and the objective is to identify the price that maximizes expected revenue. However, unlike classical bandits with independent rewards, queueing systems exhibit {\it memory effects}: customers carried over from previous cycles influence the current system state and, consequently, the observed revenue. Therefore, any effective algorithm must account for these transient dynamics when evaluating performance at different price levels.
(3)~Another challenge concerns how prices are updated. A common {\it gradient-based} method adjusts the price iteratively using an estimate of the objective’s gradient. The main difficulty is estimating this gradient accurately from data while ensuring fast convergence {of the price updating algorithm}.

\medskip

{\it Our setup~---} As mentioned, the class of revenue maximization problems we consider requires a sound modeling of the interaction between price, demand, and congestion. Existing approaches generally decompose the objective into a demand-based revenue component and a congestion penalty, with the relative influence of each determined by ad hoc weighting parameters. In contrast, our framework {\it integrates} congestion effects directly into a {\it single} demand-related revenue term, thereby eliminating the need to assign weights to intrinsically incomparable quantities such as monetary revenue and mean queueing delay. Moreover, our model embodies an inherent decentralized decision-making structure, whereby customers independently decide whether to join or balk after observing the congestion level, and the central planner exerts only indirect influence via the admission price.

Our base model is a single-server system with Poisson arrivals and a general service-time distribution. 
Upon arrival, customers are informed of the current price $p$ set by the administrator and the current workload level $v$, and decide whether to join or balk. Let $H_\theta(p, v)$ denote the probability that a customer joins {conditional on} observing workload $v$ and price $p$, where $\theta$ is a known ({possibly} multidimensional) parameter characterizing the customer population.
This setup is flexible in the sense that it can represent a heterogeneous population of strategic customers with some  parametric distribution of customer types that differ in their utility function, or it can simply reflect a behavioral (e.g., empirical) joining rule. The objective of the administrator is to maximize long-term revenue, i.e., they aim to find \( p^* \) that maximizes
\begin{equation}\label{eq:obj}
p\,\mathbb{E}[H_\theta(p,V)],
\end{equation}
where \( V\equiv V(p) \) is the {stationary workload} at price \( p \). Thus, the \textit{effective arrival rate} in this dynamic pricing problem is
\[
\lambda(p):=\mathbb{E}[H_\theta(p,V)].
\]
It is noted that for known queueing primitives, this quantity is generally intractable; even in the fully Markovian case with exponentially distributed service times, one has to resort to numerical evaluation.

This type of optimization problem naturally lends itself to a stochastic gradient descent (SGD) approach. However, implementing such an approach presents significant challenges: (1) the gradient of \( \lambda(p) \) must first be estimated, and (2) the {bias and variance} of the resulting estimator must be understood to establish meaningful asymptotic guarantees for the algorithm’s performance. Throughout this work, we assume that \( \theta \) is known, as it can be efficiently estimated in general settings with balking customers, as discussed in \cite{BodasMandjesRavner2024, InoueRavnerMandjes2023}.

\medskip

{\it Online learning for control of queueing systems ---}
Stochastic approximation (SA) algorithms, the class of iterative methods for finding the roots or extrema of functions when only noisy observations are available, constitute a flexible and robust approach for solving stochastic optimization problems \cite{Borkar2008}); a specific subclass is SGD, which relies on noisy gradient estimates. 
In the context of queueing systems there is a difficulty that stationary expectations and their gradients are often intractable, which makes direct implementation of standard SA algorithms infeasible. 
A useful method to overcome this difficulty is infinitesimal perturbation analysis (IPA); see \cite{Glasserman1990} for an extensive introduction to this method and \cite{LEcuyerGlynn1994} for an example of its use in a queueing context. 
The regenerative properties of such systems can often be used in order to obtain unbiased estimators of the objective function and to verify convergence to the true solution \cite{Fu1990}. 
In \cite{RavnerSnitkovsky2024} an SA algorithm based on busy periods is used to evaluate the Nash equilibrium for queues with balking customers, and more general strategic customer behavior. 
Reinforcement learning is another related approach for steady-state optimization of queueing networks \cite{DaiGluzman2022}, particularly well-suited to settings with a complex or high-dimensional policy space. In the context of dynamic pricing of service systems, the analysis is often limited to stylized, tractable models. For example, \cite{AA2013} presents a Bayesian method for learning the optimal admission price in a Markovian system with an unknown proportion of patient customers. The method relies on posterior updating of this proportion based on observed customer decisions.

Our work follows in the tradition of \cite{ChenLiuHong2024}, in that it also develops an SGD-based online learning dynamic pricing algorithm aimed at identifying the revenue-maximizing price  in a setting that includes congestion effects. In \cite{ChenLiuHong2024}, these congestion effects are incorporated into the objective function through a term proportional to the mean stationary queue length, while the arrival rate depends solely on the current price level.  It is assumed that the function $\lambda(p)$ is known, while the customers' service-time characteristics are not. 
This approach necessitates assuming that the moment-generating and cumulant-generating functions of service and inter-arrival times satisfy certain Cramér-type conditions.
The analysis relies on a specific construction for the inter-arrival and service time distributions to enable the use of conventional IPA. 
Specifically, their setup involves a sequence of independent and identically distributed random variables \( (U_n)_{n \in \mathbb{N}} \), where the \( n \)-th inter-arrival time is modeled as \( U_n / \lambda(p) \) given that the price is $p$; a similar construction is applied to the service times.
{Notably, this definition implies that the effective arrival process is not congestion-dependent. In contrast, our approach avoids imposing such a structure, in that we assume that the inter-arrival distribution depends on both the price and the workload.}

\medskip

{\it Contributions~---} 
We now outline the main contributions of our work, distinguishing between three aspects: (1)~the problem formulation; (2)~the results; and (3)~the underlying techniques.
\begin{itemize}
    \item[(1)] Regarding the problem formulation, we explicitly model the behavior of customers who choose not to join the system when faced with high congestion, resulting in lost revenue. Incorporating such balking customers is particularly challenging, as in our framework we observe only the {\it effective} arrivals — that is, data related to the customers who do not balk and enter the system. The dependence of each customer’s joining decision on the congestion level introduces several evident technical hurdles. A useful insight is that the arrival rate at a given price $p$ can be written as the reciprocal of the mean effective inter-arrival time at the same price. 

    \noindent
   Our objective function \eqref{eq:obj} consists of a single term that naturally captures the effect of congestion. This formulation is conceptually more transparent than the approach in \cite{ChenLiuHong2024}, which combines a revenue term with a separate congestion penalty and thus requires specifying the relative weight of each component. Moreover, while the congestion penalty in \cite{ChenLiuHong2024} reflects the performance degradation experienced by customers, it does not account for the reduced willingness of customers to join the system under high congestion.

    \item[(2)] We develop an {online} algorithm that provably converges to the optimal price $p^*$. The {adaptive queue control schemes in previous works, such as \cite{ChenLiuHong2024,Glasserman1990,LEcuyerGlynn1994}, typically rely on observations of customer delays}, where in the proofs it plays a central role that the queue process is geometrically ergodic \cite[Lemma 2]{ChenLiuHong2024},  {and assumes, as noted before, that the service and inter-arrival times satisfy Cramér-type conditions.} Importantly, our price update scheme avoids such assumptions. The process at the center of our analysis is the sequence of inter-arrival times, which is well-behaved (light tails and quick mixing) regardless of the workload process. We assume Poisson arrivals, which is a mild assumption in the context of a large pool of potential customers independently requesting service.

    As the algorithm adapts to the distribution of the underlying process, the resulting observations become non-stationary, introducing bias into the gradient estimator. Nevertheless, this estimator remains consistent, with its variance uniformly bounded with respect to the price. Furthermore, the bias decays at a rate of $\mathcal{O}\big(\eta_{k-1} ({T^*_{k-1}})^{-2}\big)$, where $\eta_k$ denotes the learning rate of the SGD algorithm and $T^*_k$ represents an increasing sampling window size at iteration $k \geq 1$. By setting $\eta_k = \eta k^{-\alpha}$, the algorithm achieves a regret bound of $\mathcal{O}\big(\sum_{k=1}^{L} T^*_k k^{-\alpha/2}\big)$, where $L$ is the total number of iterations.

    \item[(3)] To establish convergence to the optimal price \( p^* \), it is necessary at various stages to control the transient behavior of the underlying queueing system. 
    In each cycle of the price-adjustment algorithm, the price is kept fixed at a value $p$. The system then operates as an M/G/1 queue with balking customers, in which the probability that an arriving customer joins is given by $H_\theta(p, v)$, conditional on the observed congestion level $v$.
Analyzing such queues is notoriously difficult, even in terms of their stationary behavior. We succeed in developing sufficiently sharp bounds, {based on a natural coupling idea,} that enable proving the convergence to the revenue maximizing price.  
These enable us to identify explicit bounds on the bias and `variability' of the gradient of the revenue (i.e., the derivative of the revenue with respect to the price, in each step of the algorithm). 

 The IPA estimator required the development of a new recursive formulation of the sample-path gradients pertaining to the effective arrival process. It is noted that the standard analysis in this area covers processes that are considerably more elementary, typically of random-walk type. Providing performance guarantees for the specific form of the resulting gradient estimator required novel coupling arguments. As mentioned, the arrival rate is  the reciprocal of the mean effective inter-arrival time, which is a non-standard object in the context of IPA analysis.

     \item[(4)]
    \noindent 
    Our approach paves the way for the development of more robust online control algorithms for queueing systems with congestion-dependent joining behavior. Unlike in \cite{ChenLiuHong2024}, we do not optimize over service capacity. However, a byproduct of our analysis is the derivation of workload gradients, which makes our  {algorithm} extensible to settings that include holding costs. 
    Notably, these workload gradients differ structurally from those obtained through classical IPA schemes, due to the dependency between inter-arrival times and the workload. Furthermore, our approach naturally extends to multi-server systems, enabling optimization over the number of servers as well. 
\end{itemize}

We have organized our paper as follows: Section \ref{section:model_and_objectives} introduces the model, formulates the problem that is being studied, and in short highlights the stochastic gradient descent algorithm designed to solve it. 
Section \ref{sec:queue} goes into intricate mathematical details and establishes crucial results about the model that are key to the goal of this paper.
Section \ref{sec:grad_est} is devoted to the construction of the gradient estimator used in the stochastic gradient descent algorithm.
By this point, the reader is exposed to the complete algorithm with all the details. 
Section \ref{section:convergence_analysis} then focuses on the analysis of the proposed gradient estimator. 
Upper bounds on the bias and the variability of the gradient estimator are established.
To evaluate the performance of the proposed algorithm, Section~\ref{section:regret_analysis} analyzes a widely used  metric known as \textit{regret}.
Section \ref{section:numerical_experiments} discusses a series of simulation experiments, confirming our estimation procedure's efficacy.
{Appendix \ref{appendix:proofs} contains the proofs to some results in this paper, and Appendix \ref{appendix:formulae:alternative_joining_probability} consists of additional formulae for the interested reader, which were used to obtain some of the plots in Section~\ref{section:numerical_experiments}.}

\section{Model and objectives} \label{section:model_and_objectives}

In this section we discuss the model under study, introduce our objective function, and specify our learning algorithm.

\subsection{System dynamics}

We model the service system via a FCFS (first-come-first-served) single-server queue.
Potential customers arrive according to a Poisson process with 
known
rate $\Lambda$.
Let the customers' service requirements form a sequence of independent and identically distributed (i.i.d.) random variables, distributed as the generic random variable $S$ with cumulative distribution function (CDF) $G(\cdot)$, where these service times are assumed independent of the arrival process.
To join the system, each customer must pay the admission price $p \in \mathcal{P} \equiv \big[\underline{p}, \overline{p}\big]$ quoted by the service operator (where $0 \leq \underline{p} \leq \overline{p}$).
The system operator is free to adapt this price over time.
After noting the admission price $p$ and their prospective waiting time (before service) in the system $V$, the customer joins the system with probability $H(p, V)$, where $H: \mathcal{P} \times \mathbb{R}_{+} \mapsto [0,1]$, and otherwise they balk.
These join/balk decisions are sampled independently across customers, and independently of both the arrival process and the service requirements.
Customers that join the system remain until service completion. Over a period during which the price $p$ is fixed, the model described above can be denoted — using standard Kendall notation —  as an M/G/1\,+\,$H(p, V)$ queue, to emphasize that it is an M/G/1 queue with customers who are sensitive to both price and congestion.

\begin{example}\label{example: Psi and H}
    For $\theta_1, \theta_2 > 0$, suppose $H(p, V) = \exp\left(-\theta_1 p - \theta_2 V\right)$. Then an incoming customer who must pay admission price $p$, and whose prospective waiting time is $V$ joins the system with probability $\exp\left(-\theta_1p-\theta_2V\right)$ and balks otherwise.
    \hfill $\blacklozenge$
\end{example}

For ease of discussion, throughout our exposition we maintain the probabilistic interpretation of $H$. Nonetheless, there is an alternative (utility-theoretic) interpretation of $H$ as well. Specifically, suppose customers are equipped with
\begin{enumerate}
    \item disutility function $\phi: \mathcal{P} \times \mathbb{R}_{+} \mapsto [0, \infty)$ which evaluates their disutility in joining service,
    \vspace{-0.3cm}
    \item disutility threshold $Y$ sampled from a distribution with a given CDF.
\end{enumerate}
When an incoming customer faces an admission price $p$ and a prospective waiting time $V$, they evaluate their disutility of joining the service, given by $\phi(p, V)$, and decide to enter the system only if $Y \geq \phi(p, V)$; otherwise, they balk. In Example~\ref{example: Psi and H}, we considered the case $\phi(p, V) = \theta_1 p + \theta_2 V$ and   ${\mathbb P}(Y\leq y)=1 - e^{-x}$, where the latter corresponds to the disutility threshold $Y$ being exponentially distributed with mean~1.

We impose some natural assumptions on the joining probability function $H(p, V)$. Specifically, we assume that it decays to zero as either the admission price $p$ or the prospective waiting time $V$ tends to infinity, and that it satisfies mild regularity conditions.

\begin{ass}\label{assumption:H}
    The joining probability function $H: \mathcal{P} \times \mathbb{R}_{+} \rightarrow [0,1]$ is such that
    \begin{enumerate}
        \item[(a)] $\lim_{p \rightarrow \infty} H(p, V) = 0$.
        \item[(b)] $\lim_{V \rightarrow \infty} H(p, V) = 0$.
        \item[(c)] $H(\cdot,V): \mathcal{P} \rightarrow [0,1]$ is continuously differentiable.
        \item[(d)] $H(p, \cdot): \mathbb{R}_{+} \rightarrow [0,1]$ is differentiable.
    \end{enumerate}
\end{ass}

We now proceed by further detailing the system dynamics and introducing additional notation. Since customers may balk, we define the notion of {\it effective arrivals}, referring to those customers who actually choose to join the system.
For $k \geq 1$, let $\widetilde{A}_k$ denote the $k$-th effective arrival time, and let $A_k$ be the $k$-th effective interarrival time, so that $\widetilde{A}_k = \widetilde{A}_{k-1} + A_k$, with $\widetilde{A}_0 := 0$. Importantly, the effective arrival process is {\it not} Poisson due the balking mechanism.

For any $k \geq 1$, we let $\underline{W}_{k}$ ($\overline{W}_k$) denote the {\it workload} in the system just before (after) the $k$-th effective arrival, with $\overline{W}_0 := 0$.
Let $S_k$ be the service requirement corresponding to the $k$-th effective arrival, so that $\overline{W}_k = \underline{W}_k + S_k$. A graphical illustration of the workload process is given in Figure~\ref{fig:workload_dynamics}.

The probability distribution of $A_k$ is characterized by the admission price $p$, and $\overline{W}_{k-1}$.
Let $F_p(.;w)$ be the cumulative distribution function of $A_k$, in a period in which the admission price is $p$ and $\overline{W}_{k-1}=w$. Denoting by $N(\cdot)$ the counting process of the effective arrivals, we have, in self-evident notation, 
\begin{align}
    F_p\left(\ell; w\right) = \prob\left(A_k \leq \ell \ \big\vert \:p, \overline{W}_{k-1} = w\right) &= 1 - \prob\left(A_k > \ell \ \big\vert\: p, \overline{W}_{k-1} = w\right) \notag
    \\
    &= 1 - \prob\left(N\left(\widetilde{A}_{k-1}, \widetilde{A}_{k-1}+\ell\right] = 0 \ \big\vert\: p, \overline{W}_{k-1} = w \right). \notag
\end{align}
Observe that $N(\cdot)$, conditioned on $p$ and $\overline W_{k-1}$, is a {\it thinned Poisson process}. Specifically, 

\begin{align} \label{eqn:general:F_p(l;w)}
    F_p(\ell;w) &=  1 - \text{exp}\bigl(-\Lambda J(\ell;p,w)\bigr), \quad \mbox{}\:\:
    J(\ell;p,w) := \int_{0}^{\ell} H\left(p, (w-t)^{+}\right) \,\mathrm{d}t.
\end{align}

\setcounter{example}{0}
\begin{example}\textit{(Continued) }
    In this case, an explicit formula for $F_p(\ell;w)$ is given by
    \begin{align} \label{eqn:F_p(l;w)}
        F_p(\ell;w) = 
        \begin{cases}
            1 - \exp\biggl(-\Lambda e^{-\theta_1 p -\theta_2 w} \Big\{\frac{e^{\theta_2 \ell} - 1}{\theta_2}\Big\} \biggr), \hspace{1.7cm}  \ell < w.
            \\
            1 - \exp\biggl(-\Lambda e^{-\theta_1 p} \Big\{ \frac{1 - e^{-\theta_2 w}}{\theta_2} + (\ell-w) \Big\}\biggr), \hspace{0.5cm} \ell \geq w.
        \end{cases}
    \end{align}
    A specific situation of interest is $\ell = w$; then we have
    \begin{align} \label{eqn:F_p(w;w)}
        F_p(w;w) = 1 - \exp\Biggl(-\Lambda e^{-\theta_1 p} \Bigg\{\frac{1 - e^{-\theta_2 w}}{\theta_2}\Bigg\} \Biggr).
    \end{align}
    We use this expression later in our analysis. 
    \hfill $\blacklozenge$
\end{example}

\begin{figure}
    \centering
    \includegraphics[width = 15 cm, height = 10cm]{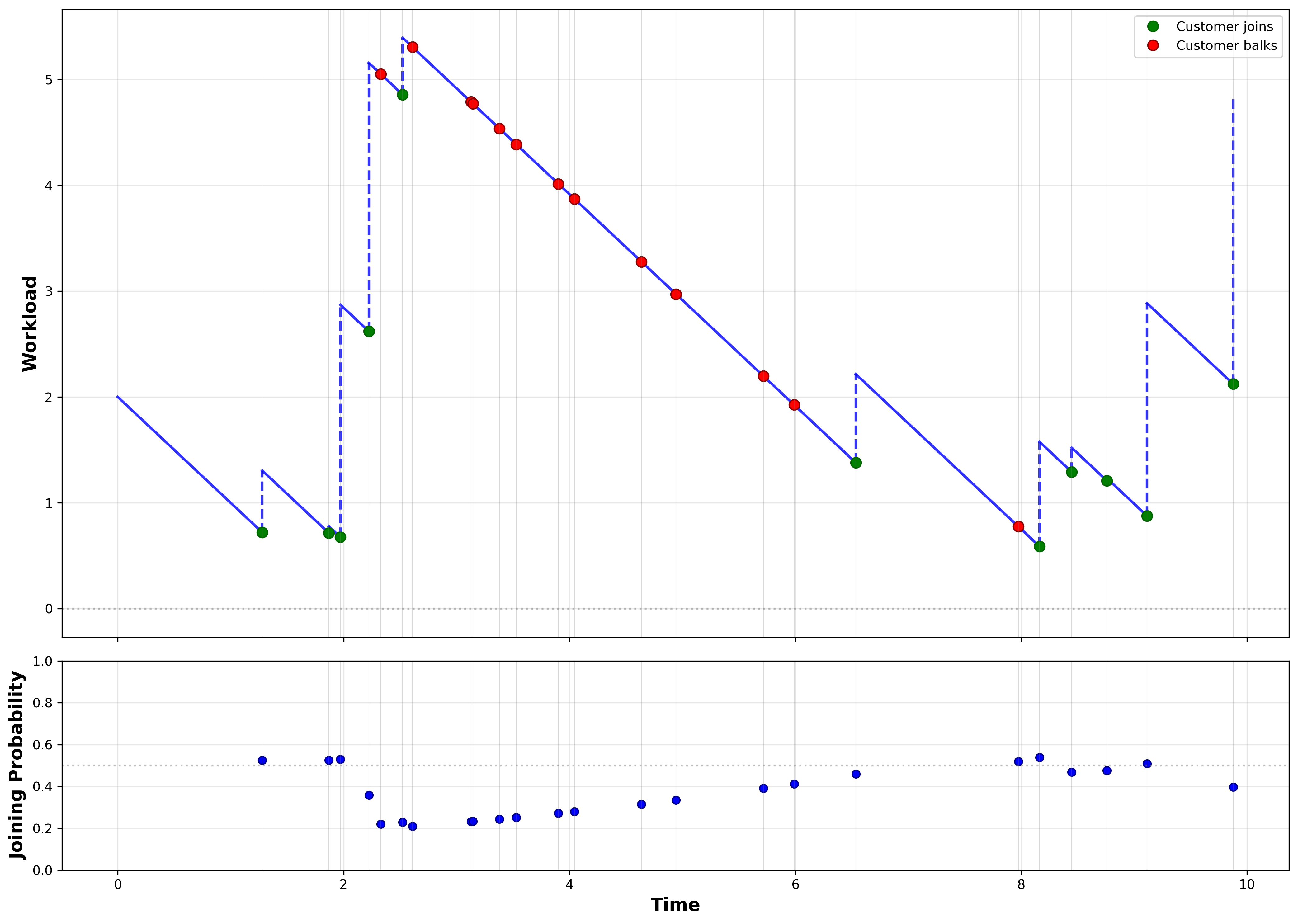}
    \caption{Workload dynamics of an M/G/1 + H queue with $\Lambda = 2$, $p = 5$, $H(p, V) = \exp(-\theta_1p -\theta_2V)$. The green dots represent those customers that join, while red dots represent those that balk.}
    \label{fig:workload_dynamics}
\end{figure}

\begin{figure}[t!]
    \centering
    \begin{subfigure}{0.48\textwidth}
        \centering
        \includegraphics[width=\linewidth]{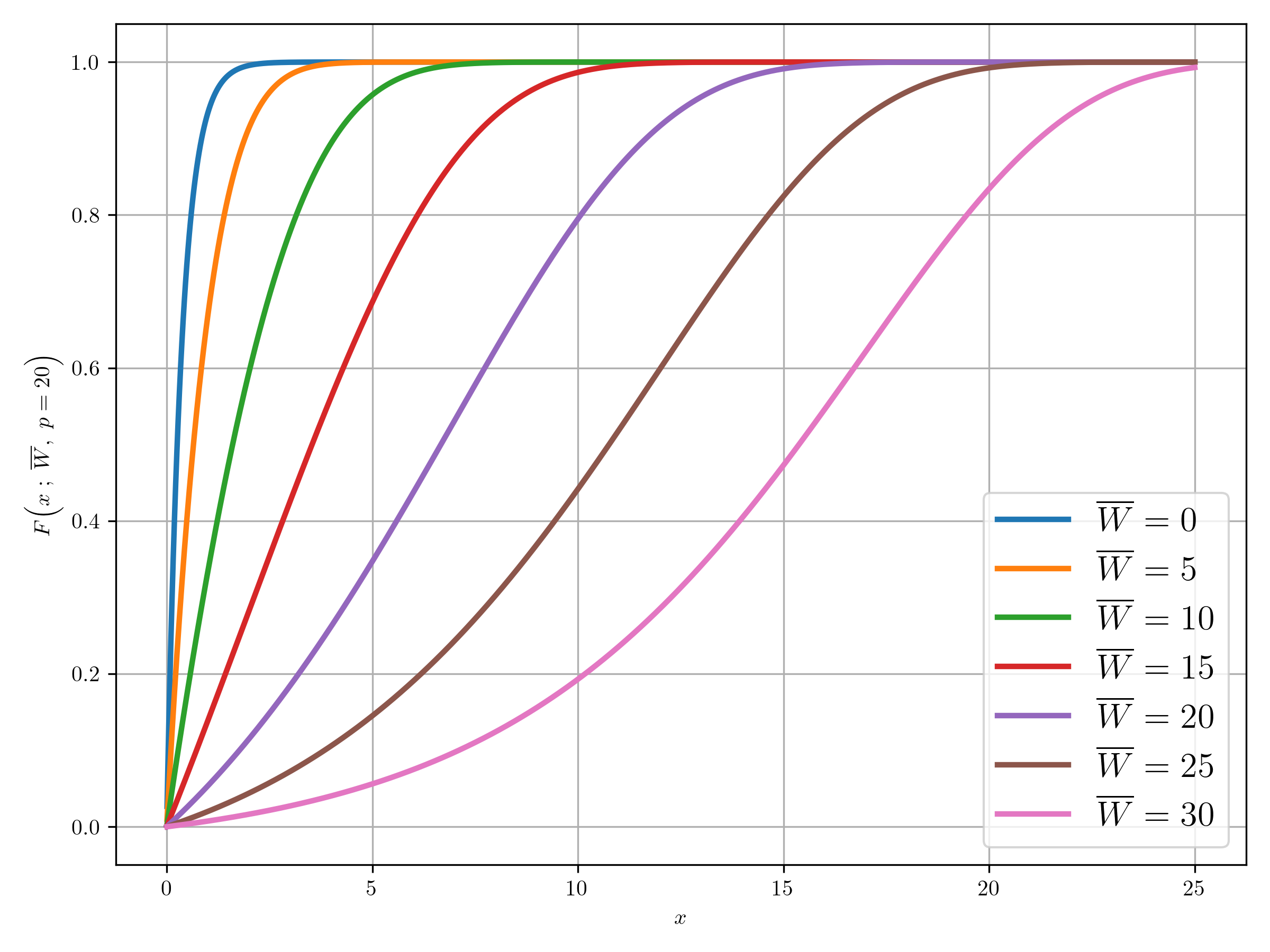}
        \caption{Plots of \( F_p(x;\overline{W}) \) for fixed price \( p = 20 \) and varying initial workloads \( \overline{W} \).}
        \label{fig:CDF:a}
    \end{subfigure}
    \hfill
    \begin{subfigure}{0.48\textwidth}
        \centering
        \includegraphics[width=\linewidth]{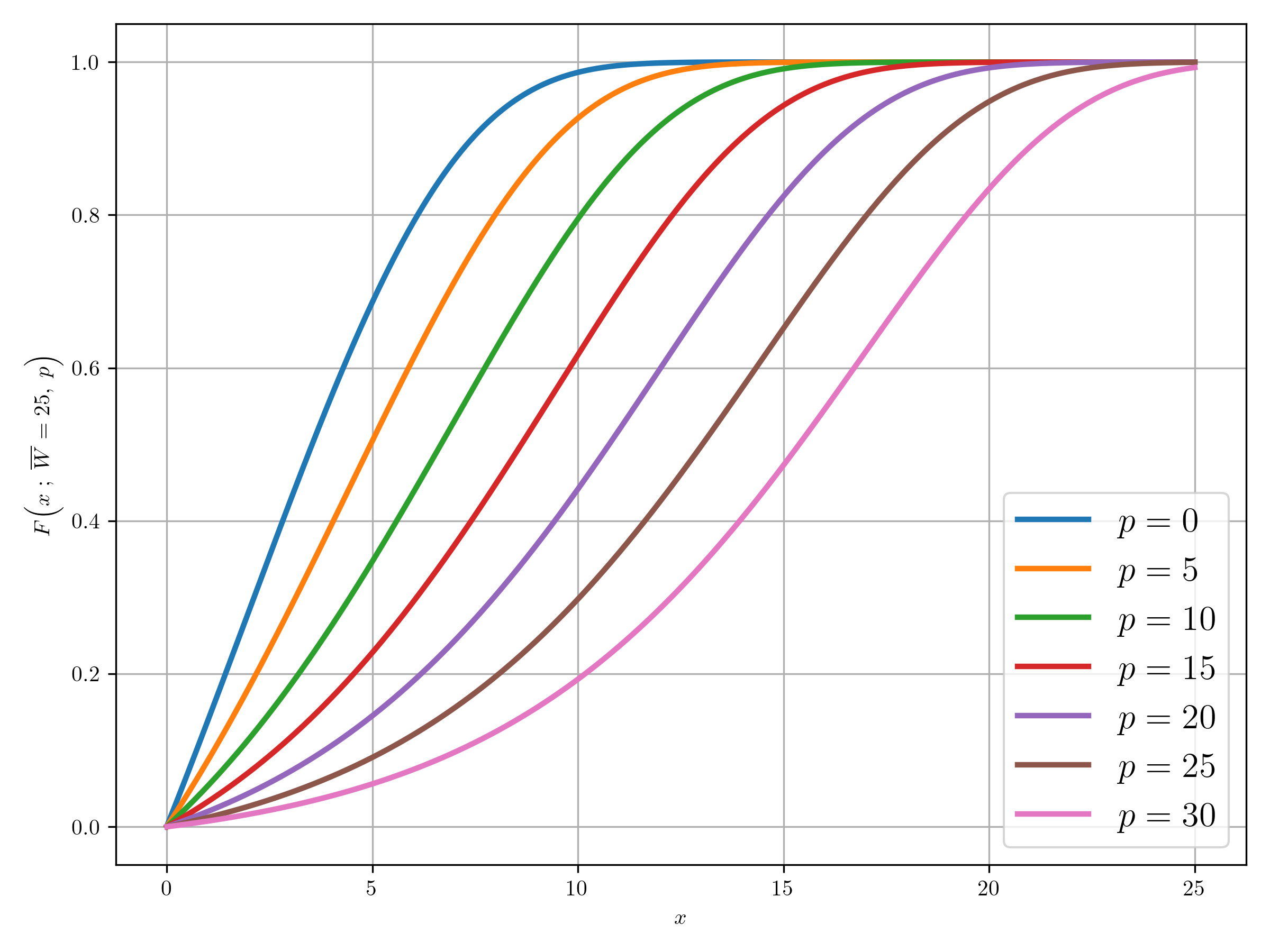}
        \caption{Plots of \( F_p(x;\overline{W}) \) for fixed initial workload \( \overline{W} = 25 \) and varying prices \( p \).}
        \label{fig:CDF:b}
    \end{subfigure}
    \caption{CDF of an effective interarrival time in the setup from Example \ref{example: Psi and H} for different combinations of $p, \overline{W}$.} \label{fig:CDF:A}
\end{figure}

As mentioned, the effective arrival process is not Poisson: the interarrival times are neither exponentially distributed nor independent. 
{Figure \ref{fig:CDF:A} plots the CDF of an effective interarrival time in Example~\ref{example: Psi and H} for $\Lambda = 20$, $(\theta_1, \theta_2) = (0.1, 0.2)$.}
In particular, Figure \ref{fig:CDF:a} shows plots for a fixed admission price $p = 20$, and different initial workloads $\overline{W} \in \{0, 5, 10, 15, 20, 25, 30 \}$ whereas Figure \ref{fig:CDF:b} shows plots for a fixed initial workload $\overline{W} = 25$, and different admission prices $p \in \{0, 5, 10, 15, 20, 25, 30\}$.

Finally we define three random variables corresponding to the steady-state of an M/G/1 + H$(p, V)$ queue:
\begin{enumerate}
    \item[(i)] $W_{\infty}(p)$ denotes the steady-state workload,
    \vspace{-0.3cm}
    \item[(ii)] $\overline{W}_{\infty}(p)$ denotes the steady-state workload just after an effective arrival, and
    \vspace{-0.3cm}
    \item[(iii)] $A_{\infty}(p)$ denotes the steady-state effective interarrival time.
\end{enumerate}

\subsection{Objective function}
 
The primary goal of this paper is to find an efficient algorithm to identify the {\it revenue maximizing price} $p^*$, defined as
\begin{align*}
    p^*=\argmax_{p \in \mathcal{P}}{\Psi(p)}.
\end{align*}
We are now ready to formally define the
steady-state revenue generated per unit time $\Psi(p)$, given that the admission price is (permanently)~$p$.
The following proposition characterizes $\Psi(p)$ in terms of the steady-state queueing performance. {Its proof is deferred to Appendix \ref{appendix:proof:propn:formula:Psi}}.
 
\begin{propn}\label{propn:formula:Psi} For any $p \in \mathcal{P}$,
    \begin{align}
        \Psi(p) &= \Lambda \exptn\big[H(p,W_{\infty}(p))\big] \, p \label{eqn:formula:Psi:1}
        \\
        &= \frac{p}{\exptn\left[A_{\infty}(p)\right]}. \label{eqn:formula:Psi:2}
    \end{align}
\end{propn}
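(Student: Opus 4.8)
The plan is to introduce the long-run \emph{effective arrival rate}
\[
\lambda(p) := \lim_{T\to\infty} \frac{N(T)}{T},
\]
where $N(T)$ counts the effective arrivals in $[0,T]$, and to show that both displayed expressions equal $\lambda(p)\,p$. The economic content of the proposition is simply that the steady-state revenue per unit time is the rate at which paying customers enter the system, multiplied by the price $p$ that each of them pays; the two formulae then arise from evaluating $\lambda(p)$ in two different ways. Before doing so I would record that the system admits a well-defined steady state: because $H(p,V)\to 0$ as $V\to\infty$ (Assumption~\ref{assumption:H}(b)), the effective input is throttled at high workload, so the embedded workload-after-arrival chain $\overline{W}_k$ is positive recurrent and the steady-state variables $W_{\infty}(p)$, $\overline{W}_{\infty}(p)$, $A_{\infty}(p)$ all exist.

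For \eqref{eqn:formula:Psi:1} I would argue via PASTA. The potential customers form a Poisson process of rate $\Lambda$ that, by construction, does not anticipate the workload process (service times and balking decisions are sampled independently of the future of the arrival stream). Hence the workload seen by a typical potential arrival, just before it acts, is distributed as the time-stationary workload $W_{\infty}(p)$. Conditioning on this workload and using that the join/balk decisions are independent Bernoulli draws with success probability $H(p,V)$, the fraction of potential arrivals that join converges almost surely to $\mathbb{E}[H(p,W_{\infty}(p))]$, so that $\lambda(p)=\Lambda\,\mathbb{E}[H(p,W_{\infty}(p))]$; multiplying by $p$ yields the first formula.

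For \eqref{eqn:formula:Psi:2} I would work with the effective interarrival times directly. Writing $\widetilde{A}_n=\sum_{k=1}^{n} A_k$, the pair $(\overline{W}_{k-1},A_k)$ is, in steady state, a stationary and ergodic sequence (being a functional of the positive recurrent chain $\overline{W}_k$), so the ergodic theorem gives $n^{-1}\widetilde{A}_n\to \mathbb{E}[A_{\infty}(p)]$ almost surely. Inverting this in the standard way, by sandwiching $T$ between $\widetilde{A}_{N(T)}$ and $\widetilde{A}_{N(T)+1}$, yields $N(T)/T\to 1/\mathbb{E}[A_{\infty}(p)]$, i.e.\ $\lambda(p)=1/\mathbb{E}[A_{\infty}(p)]$, and multiplying by $p$ gives the second formula.

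The main obstacle is not either limiting computation, both of which are routine once the setup is in place, but the ergodic foundation beneath them: one must verify that the non-standard, state-dependent effective arrival process (whose interarrival law $F_p(\cdot\,;w)$ depends on $\overline{W}_{k-1}$) indeed induces a positive recurrent embedded chain with a unique stationary law, so that $\lambda(p)$, $W_{\infty}(p)$ and $A_{\infty}(p)$ are simultaneously well-defined and the two limiting frequencies necessarily agree. This is precisely what makes the balking/thinning structure more delicate than a classical renewal argument; establishing the requisite stability, and being careful to apply PASTA to the \emph{potential} rather than the effective arrival stream, are the two points I would treat most carefully.
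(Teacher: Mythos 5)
Your proposal is correct and follows essentially the same approach as the paper: both express $\Psi(p)$ as the price times the long-run effective arrival rate $\lim_{T\to\infty} N(T)/T$, evaluate this rate as $\Lambda\,\exptn[H(p,W_{\infty}(p))]$ for \eqref{eqn:formula:Psi:1}, and apply the ergodic theorem to the sequence of effective interarrival times, followed by the standard inversion, for \eqref{eqn:formula:Psi:2}. Your version merely fills in more explicitly the justifications (PASTA applied to the potential Poisson stream, the sandwich argument $\widetilde{A}_{N(T)}\le T<\widetilde{A}_{N(T)+1}$, and the stability foundation, which the paper isolates in Proposition~\ref{propn: stability condition}) that the paper's terser proof leaves implicit.
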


Observe that Proposition \ref{propn:formula:Psi} highlights the fundamental trade-off inherent in the problem of admission pricing in a congested system. Indeed, while a higher price increases the revenue earned per customer,
\begin{enumerate}
    \item it reduces the probability of joining, as in \eqref{eqn:formula:Psi:1}; 
    \item  it also lengthens the expected time between effective arrivals, as in \eqref{eqn:formula:Psi:2}.
\end{enumerate}
The remainder of the paper is devoted to the discussion and analysis of an algorithm for learning $p^*$ that relies solely on Equation \eqref{eqn:formula:Psi:2}.
As such, understanding the structure of the effective arrival process is central to our analysis.
In particular, it allows for the estimation of both the objective function and its gradient — quantities that form the foundation of the learning algorithm presented in Subsection~\ref{sec:alg}.

We next assume strong concavity on $\Psi(\cdot)$ and a Lipschitz continuity on $\nabla\Psi(\cdot)$.
These assumptions are instrumental in establishing the convergence rate of the price iterates generated by our algorithm (Theorem~\ref{thm:price_convergence_rate}), and, in turn, in deriving a regret bound (Theorem~\ref{thm:regret:upper_bound}).

\begin{ass}\label{assumption:Psi}
    There exist $K_1 > K_0 > 0$ such that for all $p \in \mathcal{P}$, \[K_0\, (p^*-p)\le \nabla\Psi(p)\le  K_1\, (p^*-p).\]
\end{ass}

\subsection{Learning algorithm}\label{sec:alg}

In this subsection we define our {SGD-based online} algorithm to learn the optimal price $p^*$.
{Let $\{ T_k \}_{k \ge 1}$ be a sequence of random variables representing {\it window sizes}, to be defined in Section \ref{sec:grad_est}.}
For $k \geq 1$, define $\overline{T}_0:= 0$ and $\overline{T}_k = T_1 + \cdots + T_k$.
Let iteration $k$ of the algorithm correspond to time interval $\big[\overline{T}_{k-1}, \overline{T}_k\big)$.
Then, for $k \geq 1$, in iteration $k$ of the algorithm the operator sets the admission price $p_{k-1}$. In the algorithm $p_0 \in \mathcal{P}$ can be chosen freely, and $p_k$ is given by the recursion
\begin{align}\label{recur}
    p_k := \pi_{\mathcal{P}}\left[p_{k-1} + \eta_k \widehat{\nabla\Psi}(p_{k-1})\right];
\end{align}
here 
\begin{itemize}
    \item[$\circ$] $\widehat{\nabla\Psi}(p_{k-1})$ is an estimator of the gradient {at price $p_{k-1}$}; 
    \item[$\circ$] $\{\eta_k\}_{k \geq 1}$ is a suitable step-size sequence, 
    \item[$\circ$] $\pi_{\mathcal{P}}(\cdot): \mathbb{R} \mapsto \mathcal{P}$ is the standard projection operator (i.e., $\pi_{\mathcal{P}}(x):=\max(\underline{p},\min(\overline{p},x))$).
\end{itemize}

Our strategy for constructing an estimator $\widehat{\nabla\Psi}(p_{k-1})$ follows from Equation \eqref{eqn:formula:Psi:2}:
\begin{align} \label{eqn:der Psi}
    \nabla \Psi(p_{k-1}) = \frac{1}{\exptn\left[A_{\infty}(p_{k-1})\right]} - p_{k-1} \frac{\nabla_p \exptn\left[A_{\infty}(p_{k-1})\right]}{\exptn\left[A_{\infty}(p_{k-1})\right]^2} = \frac{1}{\exptn\left[A_{\infty}(p_{k-1})\right]} - p_{k-1} \frac{\exptn\left[\nabla_p A_{\infty}(p_{k-1})\right]}{\exptn\left[A_{\infty}(p_{k-1})\right]^2}.
\end{align}
The change of the order between the derivative and expectation is justified in~Proposition \ref{propn:der[exptn A] = exptn[der A]}, and is the basis for IPA \cite{Glasserman1990}.  
Based on the observations in the time interval $\big[\overline{T}_{k-1}, \overline{T}_k\big)$, the gradient estimator is then
\begin{align}\label{eqn:psi_hat}
   \widehat{\nabla\Psi}(p_{k-1}):= \frac{1}{\widehat{A_{\infty}}(p_{k-1})} - p_{k-1} \frac{\widehat{\nabla A_{\infty}}(p_{k-1})}{\widehat{A_{\infty}}(p_{k-1})^2},
\end{align}
where the estimators $\widehat{A_{\infty}}(p_{k-1})$ and $\widehat{\nabla A_{\infty}}(p_{k-1})$ are based on event counts and waiting time data. The specific construction of these estimators requires additional notation and is detailed in Section~\ref{sec:grad_est}.









\section{The effective arrival and workload processes}
\label{sec:queue}

In this section we discuss a series of queueing-theoretic results that will be used in our analysis. The first part (Section~\ref{sec:queue_grad}) constructs a recursive representation of the gradients of the interarrival and workload process at arrival instants. This is then used to establish uniform bounds on the steady-state gradients, and finally to prove the characterization of the objective function stated in Proposition~\ref{propn:formula:Psi}. Furthermore, we establish that differentiation and expectation can be interchanged in the expression for the steady-state revenue, thereby validating \eqref{eqn:der Psi}. The second part (Section~\ref{sec:queue_coupling}) introduces a coupling argument to derive a collection of useful properties, which are instrumental in the subsequent performance analysis of our algorithm.

\medskip

The first result provides a sufficient condition for stability. 

\begin{propn}\label{propn: stability condition}
    An {\em M/G/1 + $H(p,V)$} queue with joining probability $H: \mathcal{P} \times \mathbb{R}_+ \mapsto [0,1]$ satisfying Assumption \ref{assumption:H} is stable for all arrival rates $\Lambda > 0$, admission prices $p \in \mathcal{P}$, and service time distributions $G(.)$ with finite first moment.
\end{propn}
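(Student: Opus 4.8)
The plan is to analyze the workload process $\{W(t)\}_{t\ge 0}$ under a permanently fixed price $p$, which is a piecewise-deterministic Markov process on $[0,\infty)$: between effective arrivals the workload decreases at unit rate, and at the epochs of a state-dependent (thinned) Poisson process of intensity $\Lambda H(p, W(t))$ it jumps upward by an independent service requirement $S\sim G$. Stability — i.e.\ positive Harris recurrence and hence a unique stationary workload law — will be established through a Foster--Lyapunov drift argument, the crux being that the balking mechanism forces the effective arrival rate to vanish at high congestion.

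The extended generator of $\{W(t)\}$ acts on a smooth function $f$ by, for $w>0$,
\begin{align*}
    \mathcal{L}f(w) = -f'(w) + \Lambda\, H(p,w)\int_0^\infty \big(f(w+s)-f(w)\big)\,\mathrm{d}G(s),
\end{align*}
with the drift term absent at $w=0$ since the workload cannot fall below zero. Taking the norm-like Lyapunov function $f(w)=w$ and using $\exptn[S]<\infty$ (the finite-first-moment hypothesis) to guarantee the jump term is finite, one obtains
\begin{align*}
    \mathcal{L}f(w) = -1 + \Lambda\, H(p,w)\,\exptn[S], \qquad w>0.
\end{align*}
Here Assumption~\ref{assumption:H}(b) enters decisively: since $H(p,w)\to 0$ as $w\to\infty$, for the given (arbitrary) $\Lambda$ there is a threshold $w_0$ with $\Lambda H(p,w)\,\exptn[S]\le \tfrac12$, and hence $\mathcal{L}f(w)\le -\tfrac12$, for all $w\ge w_0$; on the compact set $C:=[0,w_0]$ the drift is at most $\Lambda\,\exptn[S]$ because $H\le 1$. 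Combining these,
\begin{align*}
    \mathcal{L}f(w)\le -\tfrac12 + \big(\tfrac12+\Lambda\,\exptn[S]\big)\,\mathbbm{1}_{C}(w),
\end{align*}
the standard Foster--Lyapunov drift inequality. This also explains why stability holds for \emph{every} $\Lambda>0$: a larger $\Lambda$ only enlarges $w_0$, but never removes the negative drift at high workloads.

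To promote this into positive Harris recurrence I would verify the remaining hypotheses of the drift criterion (Meyn--Tweedie, in its form for piecewise-deterministic Markov processes). Non-explosivity is immediate, as the jump intensity is bounded by $\Lambda$, so only finitely many jumps occur on any bounded interval. Irreducibility and the petiteness of $C$ follow from the observation that from any initial workload the process drains deterministically and reaches the empty state $\{0\}$ with positive probability, after which it restarts afresh, supplying a regenerative structure. Equivalently and more concretely, one may take successive visits to $\{0\}$ as regeneration epochs: the idle period is $\mathrm{Exp}(\Lambda H(p,0))$ with finite mean, while the drift bound above yields a finite expected busy period, so renewal--reward delivers the stationary distribution.

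The main obstacle I anticipate is not the (short) drift computation but the rigorous verification of the irreducibility and petiteness conditions for this state-dependent process — in particular establishing that $\{0\}$ is reached and acts as a regeneration point, and dealing with the boundary case $H(p,0)=0$ (in which the system, once empty, stays empty, so the stationary law degenerates to $\delta_0$ and stability is trivial). A lesser technical point is confirming that $f(w)=w$ belongs to the domain of the extended generator and justifying the interchange used in the jump term, both of which rest on $\exptn[S]<\infty$.
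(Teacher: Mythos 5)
Your proposal is correct and follows essentially the same route as the paper: the paper's proof is a two-line appeal to drift criteria (Propositions 7.12 and 7.14 of Bena\"im--Hurth, with details in \cite{BodasMandjesRavner2024}) using exactly the Lyapunov function $\Phi(w)=w$, with stability driven by the fact that Assumption~\ref{assumption:H}(b) forces the effective arrival rate $\Lambda H(p,w)$ to vanish at large workloads. Your write-up simply makes explicit the generator computation and the Harris-recurrence bookkeeping that the paper delegates to those references.
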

\begin{proof}
    The proof is straightforward using \cite[Propositions 7.12, 7.14]{BenaimHurth2022} by defining the Lyapunov function $\Phi(x) = x$; here $x$ denotes the system state, in our case the workload. For more details, we refer the reader to \cite[Theorem 4.1]{BodasMandjesRavner2024}.
\end{proof}

\begin{remark}\label{remark:toy example satisfies stability condition}
    Example \ref{example: Psi and H} satisfies the requirements of Proposition \ref{propn: stability condition}, so that the associated queueing system is stable. \hfill $\spadesuit$
\end{remark}

\subsection{Sample path gradient analysis}\label{sec:queue_grad}

Let $\zeta_k \sim \text{Uniform}[0,1]$ be the random seed used to generate the $k$-th effective interarrival time $A_k$, i.e., $A_k \overset{\mathrm{d}}{=} F^{-1}_p\big(\zeta_k; \overline{W}_{k-1}\big)$.
Recalling \eqref{eqn:general:F_p(l;w)}, we have
\begin{align} \label{eqn:general:inverse:F_p(zeta;w)}
    F^{-1}_p(\zeta;w) = J^{-1}\left(-\frac{1}{\Lambda} \log(1-\zeta) ; p,w\right).
\end{align}
\setcounter{example}{0}
\begin{example}\textit{(Continued) }
    In this case, we have the explicit expression
    \begin{align} \label{eqn:inverse:F_p(zeta;w)}
        F^{-1}_p(\zeta;w) =
        \begin{cases}
            \begin{array}{@{}l@{\quad}l}
                \dfrac{1}{\theta_2} \log\left(1 - \dfrac{\theta_2}{\Lambda} e^{\theta_1 p + \theta_2 w} \log\big(1 - \zeta\big) \right), & \zeta < F_p(w;w) \\[1.2ex]
                w - \left(\dfrac{1-e^{-\theta_2 w}}{\theta_2}\right) - \dfrac{1}{\Lambda} e^{\theta_1 p} \log\big(1 - \zeta\big), & \zeta \geq F_p(w;w),
            \end{array}
        \end{cases}
    \end{align} 
    with $F_p(w;w)$ given in \eqref{eqn:F_p(w;w)}. \hfill $\blacklozenge$
\end{example}

\begin{figure}[t!]
    \centering
    \begin{subfigure}{0.48\textwidth}
        \centering
        \includegraphics[width=\linewidth]{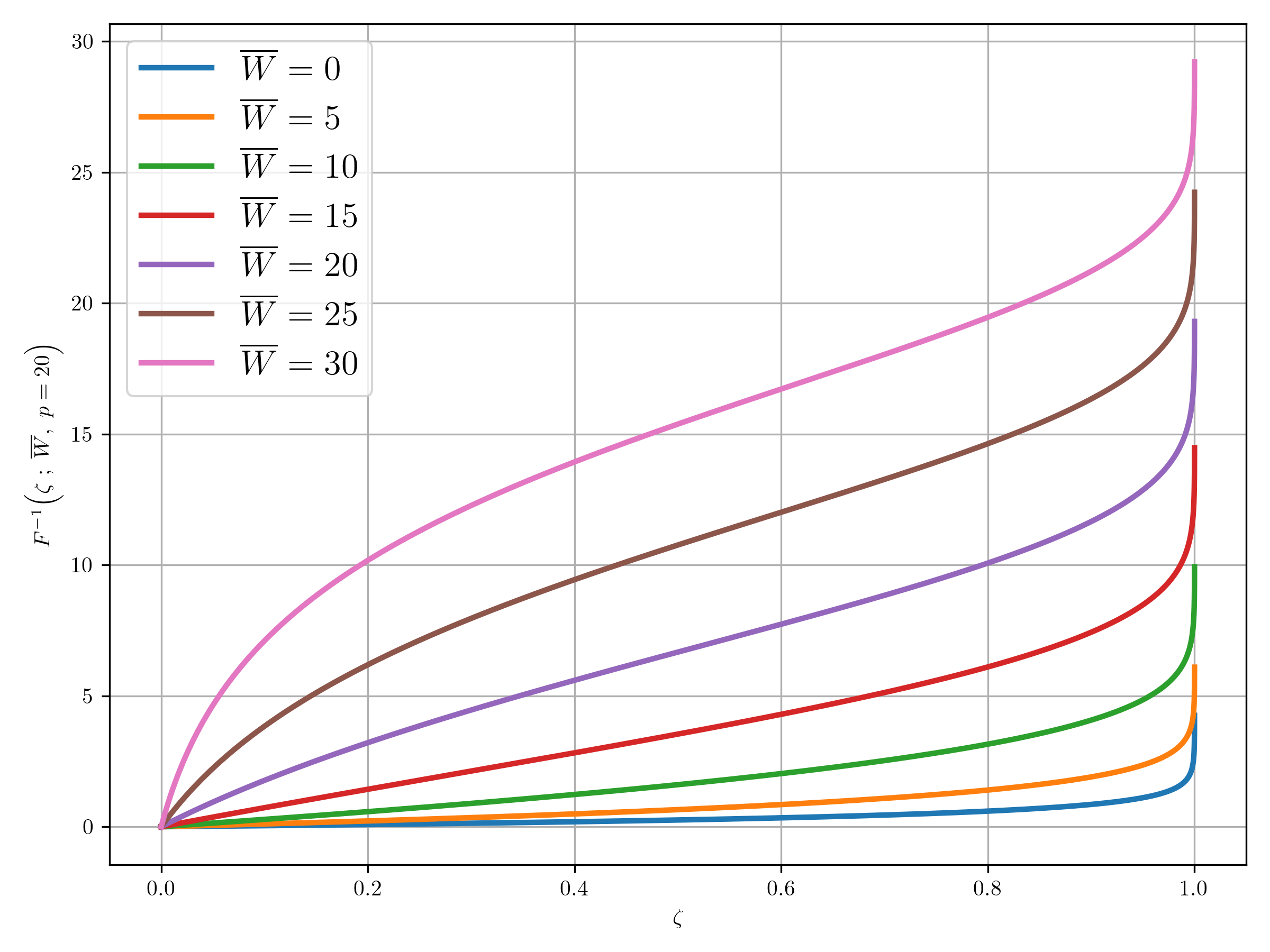}
        \caption{Plots of \( F^{-1}_p(\zeta;\overline{W}) \) for fixed price \( p = 20 \) and varying initial workloads \( \overline{W} \).}
        \label{fig:inverse CDF:a}
    \end{subfigure}
    \hfill
    \begin{subfigure}{0.48\textwidth}
        \centering
        \includegraphics[width=\linewidth]{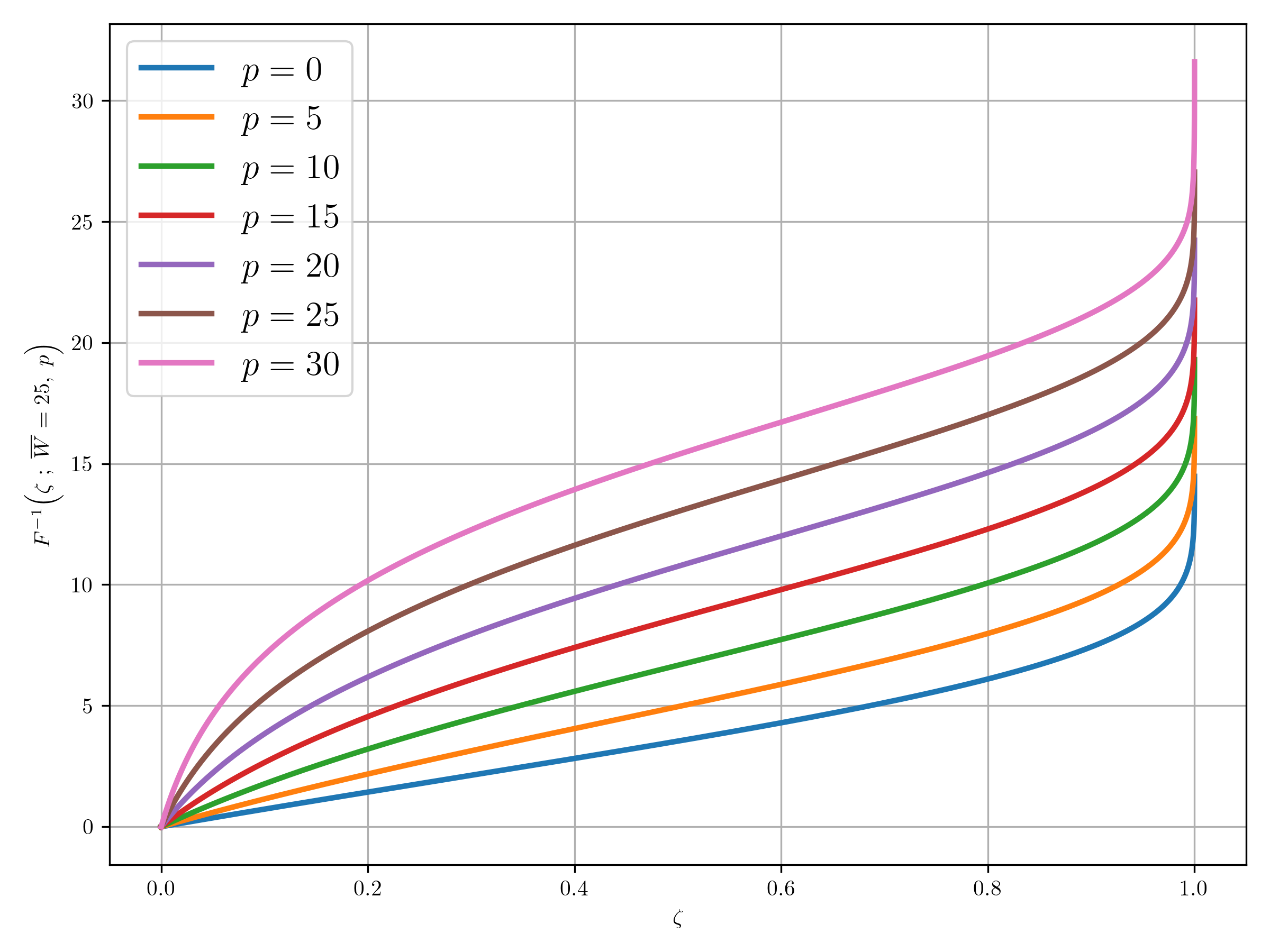}
        \caption{Plots of \( F^{-1}_p(x;\overline{W}) \) for fixed initial workload \( \overline{W} = 20 \) and varying prices \( p \).}
        \label{fig:inverse CDF:b}
    \end{subfigure}
    \caption{Inverse of the CDF of $A$ in the setup from Example \ref{example: Psi and H} for different combinations of $p, \overline{W}$}
    \label{fig:inverse:F}
\end{figure}

    Figure \ref{fig:inverse:F} shows $F^{-1}_p\big(\zeta; \overline{W}\big)$ as a function of the seed $\zeta \in (0,1)$; recall that $\Lambda = 20$ and $H(p, V) = \exp\left(-0.1p-0.2V\right)$ 
    (special case of Example~\ref{example: Psi and H}). 
    Figure \ref{fig:inverse CDF:a} shows curves for a fixed admission price $p = 20$ and different initial workloads $\overline{W} \in \{0, 5, 10, 15, 20, 25, 30 \}$, whereas Figure \ref{fig:inverse CDF:b} shows curves for a fixed initial workload $\overline{W} = 25$ and different admission prices $p \in \{0, 5, 10, 15, 20, 25, 30\}$. The figures confirm the intuition that the effective interarrival time increases when the admission price or the initial workload increases.

The price-gradients associated with the effective interarrival times, are given by  
\begin{align} \label{eqn:general:price der_inverse_F_p(zeta;w)}
    \nabla_p F^{-1}_p\left(\zeta;w\right) = \nabla_p J^{-1}\left(- \frac{1}{\Lambda} \log(1-\zeta); p, w\right),
\end{align}
play a central role in the algorithm for learning the optimal price $p^*$.

\setcounter{example}{0}
\begin{example}\textit{(Continued) }
    An explicit formula for $\nabla_p F^{-1}_p\left(\zeta; w\right)$ is given by
    \begin{align} \label{eqn:price der_inverse_F_p(zeta;w)}
        \nabla_{p} F^{-1}_p(\zeta;w) =
        \begin{cases}
            \begin{array}{@{}l@{\quad}l}
                \left(\theta_1 + \theta_2 \nabla_p w\right) 
                \left(\theta_2 - \dfrac{\Lambda}{\log(1-\zeta)} e^{-\theta_1 p - \theta_2 w} \right)^{-1}, 
                & \zeta < F_p(w;w) \\[1.2ex]
                \left(1 - e^{-\theta_2 w}\right) \nabla_p w 
                - \dfrac{\theta_1}{\Lambda} e^{\theta_1 p} \log(1-\zeta), 
                & \zeta \geq F_p(w;w)
            \end{array}
        \end{cases}
    \end{align}   with $F_p(w;w)$ as in \eqref{eqn:F_p(w;w)}.\hfill $\blacklozenge$
\end{example}

We now introduce two assumptions concerning the {sensitivity} of the interarrival time {to changes in} the initial workload, which will be crucial to our analysis. Broadly speaking, these assumptions require that the sample paths generated by the recursive equations above exhibit uniformly bounded gradients with respect to the price. We subsequently verify that these conditions are satisfied in the setting presented in Example~\ref{example: Psi and H}.

\begin{ass} \label{assumption: workload der: bounds}
    There exists a random variable $\Xi[\zeta]$ such that we have (i)~$\gamma_m := \exptn\big[(\Xi[\zeta])^m\big] < 1$ for all $m \geq 1$, and (ii)~almost surely, 
\begin{align*}
0 <  1 - \nabla_w F^{-1}_p\big(\zeta; W\big) < \Xi[\zeta].
\end{align*}
\end{ass}

\setcounter{example}{0}
\begin{example}\textit{(Continued)}
    In the case of Example \ref{example: Psi and H}, {Assumption \ref{assumption: workload der: bounds} is fulfilled. In particular,}
    \begin{align*}
    \nabla_w F^{-1}_p\left(\zeta; W\right) &=
    \begin{cases}
        \displaystyle\left(1 - \frac{\Lambda}{\theta_2} \, e^{-\theta_1 p - \theta_2 W} \, \frac{1}{\log(1 - \zeta)}\right)^{-1}, & \text{if } \zeta < F_p(W; W), \\
        \displaystyle 1 - e^{-\theta_2 W}, & \text{if } \zeta \geq F_p(W; W),
    \end{cases} \\[1.5ex]
    \Xi[\zeta] &=
    \begin{cases}
        \displaystyle 1 - \left(1 - \frac{\Lambda}{\theta_2} \, e^{-\theta_1 p} \, \frac{1}{\log(1 - \zeta)}\right)^{-1}, & \text{if } \zeta < F_p(W; W), \\
        \displaystyle 1, & \text{if } \zeta \geq F_p(W; W).
    \end{cases}
\end{align*}
        The proof for $\exptn\left[(\Xi[\zeta])^m\right] < 1$ is deferred to {Appendix \ref{appendix:proof:example:workload der:bounds}}. 
\hfill $\blacklozenge$
        \end{example}

\begin{ass}\label{assumption:gradient_bounds}
    Consider a sample path of an {M/G/1 + $H(p,V)$} queue starting with some initial workload $\overline{W}_0  < \infty$. 
    Recall that $\big\{\overline{W}_n\big\}_{n \geq 1}$ are the workloads just after effective arrivals. 
    Then, there exists $G_{\max} > 0$ such that
    \begin{align*}
        \ \nabla_p \overline{W}_n \in \left[-G_{\max}, 0\right], \ \nabla_p F^{-1}_p\big(\zeta_n;\overline{W}_n\big) \in \left[0, G_{\max}\right].
    \end{align*}
\end{ass}
     
The following lemma proves that Assumption \ref{assumption:gradient_bounds} is met in the setting of Example \ref{example: Psi and H}. Its proof is given in {Appendix \ref{appendix:proof:lemma:example:gradient_bounds}}. Figure \ref{fig:visualization:workload_gradient} visualizes this lemma.

\begin{lemma}\label{lemma:example:gradient_bounds}
    When $H(p,V) = \exp\left(-\theta_1 p -\theta_2 V\right)$, we can pick $G_{\max} = {\theta_1}/{\theta_2}$.
\end{lemma}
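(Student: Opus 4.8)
The plan is to argue by induction on the effective-arrival index $n$, using the Lindley-type recursion for the post-arrival workload together with the explicit gradient formula \eqref{eqn:price der_inverse_F_p(zeta;w)}. Writing $g := \nabla_p \overline{W}_{k-1}$, the structural fact I would exploit is that the service requirement $S_k$ is sampled independently of the price, so $\nabla_p S_k = 0$ and hence $\nabla_p \overline{W}_k = \nabla_p (\overline{W}_{k-1} - A_k)^{+}$, where $A_k = F_p^{-1}(\zeta_k; \overline{W}_{k-1})$. The seed $\zeta_k$ selects one of two regimes: when $\zeta_k < F_p(\overline{W}_{k-1}; \overline{W}_{k-1})$ the server is still busy at the $k$-th arrival ($A_k < \overline{W}_{k-1}$) and $\nabla_p \overline{W}_k = g - \nabla_p A_k$; when $\zeta_k \geq F_p(\overline{W}_{k-1}; \overline{W}_{k-1})$ the system empties ($A_k \geq \overline{W}_{k-1}$), so $\overline{W}_k = S_k$ and $\nabla_p \overline{W}_k = 0$. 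The induction hypothesis is $g \in [-\theta_1/\theta_2, 0]$, which holds at $n=0$ since $\overline{W}_0$ is a deterministic constant and therefore $p$-independent.

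The heart of the argument is the busy regime. Here the first branch of \eqref{eqn:price der_inverse_F_p(zeta;w)} gives $\nabla_p A_k = (\theta_1 + \theta_2 g)/D$ with $D := \theta_2 - \Lambda\, e^{-\theta_1 p - \theta_2 \overline{W}_{k-1}}/\log(1-\zeta_k)$. Since $\log(1-\zeta_k) < 0$, the subtracted term is strictly positive, so $D > \theta_2 > 0$; and the induction hypothesis gives $\theta_1 + \theta_2 g \in [0, \theta_1]$. Consequently $\nabla_p A_k \geq 0$, and dividing the nonnegative numerator by $D > \theta_2$ yields $\nabla_p A_k \leq (\theta_1 + \theta_2 g)/\theta_2 = \theta_1/\theta_2 + g \leq \theta_1/\theta_2$, so the interarrival gradient lies in $[0, \theta_1/\theta_2]$. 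Feeding this back, $\nabla_p \overline{W}_k = g - \nabla_p A_k \leq 0$, while the same denominator inequality gives $\nabla_p \overline{W}_k \geq g - (\theta_1 + \theta_2 g)/\theta_2 = -\theta_1/\theta_2$. This closes the induction in the busy regime.

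In the empty regime the workload bound is immediate ($\nabla_p \overline{W}_k = 0$), so it remains to control the interarrival gradient, given by the second branch of \eqref{eqn:price der_inverse_F_p(zeta;w)}, namely $\nabla_p A_k = (1 - e^{-\theta_2 \overline{W}_{k-1}})\, g - (\theta_1/\Lambda)\, e^{\theta_1 p}\log(1-\zeta_k)$. Using \eqref{eqn:F_p(w;w)} to evaluate the idle-tail term at the regime boundary $\zeta_k = F_p(w;w)$ collapses this to $(1 - e^{-\theta_2 w})(g + \theta_1/\theta_2) \geq 0$; since $-\log(1-\zeta_k)$ is increasing in $\zeta_k$, the interarrival gradient stays nonnegative throughout the regime. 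Combined with the busy-regime estimate, this establishes $\nabla_p A_k \geq 0$ unconditionally.

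The step I expect to be the main obstacle is the \emph{upper} bound on the interarrival gradient in the empty regime. Unlike the busy branch, the idle-tail contribution $-(\theta_1/\Lambda)\, e^{\theta_1 p}\log(1-\zeta_k)$ grows without bound as $\zeta_k \to 1$, so a pointwise bound by $\theta_1/\theta_2$ cannot hold for every seed. The resolution I would emphasize is that these large-gradient events occur exactly on the cycles where the system empties and the workload gradient resets to zero, so they never propagate into the recursion for $\nabla_p \overline{W}_n$; this is precisely why $G_{\max} = \theta_1/\theta_2$ governs the quantity that actually propagates, the workload gradient. For the downstream analysis only $\exptn[\nabla_p A_\infty]$ enters, and this stays finite because $\exptn|\log(1-\zeta_k)| = 1$. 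Accordingly, the interarrival gradient is best read as bounded by $\theta_1/\theta_2$ on busy cycles and integrable on idle ones, which is what the subsequent bias and variability estimates require.
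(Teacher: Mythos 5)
Your induction for the workload-gradient half is, modulo packaging, the paper's own proof: the paper treats the busy regime by rewriting the recursion as the increasing affine map $x \mapsto x\,U/(\theta_2+U)-\theta_1/(\theta_2+U)$ with $U=-\Lambda e^{-\theta_1 p-\theta_2\overline{W}_n}/\log(1-\zeta_{n+1})>0$, and resets the gradient to zero at idle transitions; your variant (first bound $\nabla_p A_k\in[0,\theta_1/\theta_2+g]$ using $D>\theta_2$, then feed back into $\nabla_p\overline{W}_k=g-\nabla_p A_k$) is the same computation. Your busy-branch bound on the interarrival gradient also coincides with the paper's display, and your boundary-value check of nonnegativity on the empty branch (evaluating the idle term at $\zeta_k=F_p(w;w)$ via \eqref{eqn:F_p(w;w)}) is a verification the paper does not perform at all.

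Where you diverge is exactly where the paper's proof is defective. The lemma asserts Assumption \ref{assumption:gradient_bounds} with $G_{\max}=\theta_1/\theta_2$, whose second half is the \emph{pathwise} bound $\nabla_p F^{-1}_p(\zeta_n;\overline{W}_n)\in[0,\theta_1/\theta_2]$; the paper verifies this using only the first (busy-branch) formula and never addresses the empty regime. Your observation that the bound cannot hold there is correct and can be made quantitative: on that branch the gradient equals $(1-e^{-\theta_2\overline{W}_n})\nabla_p\overline{W}_n+(\theta_1/\Lambda)e^{\theta_1 p}\vert\log(1-\zeta_n)\vert$, and since $\vert\log(1-\zeta_n)\vert$ is an $\mathrm{Exp}(1)$ variable it exceeds any fixed constant with positive probability at each arrival (hence, along an infinite sample path, almost surely). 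So you have identified a genuine hole in the paper's own lemma and proof, not merely an obstacle in your attempt.

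That said, your proposed repair does not rescue the statement in the form the paper actually uses it, so as a proof of the lemma as stated your proposal (by its own admission) is incomplete. It is not true that only $\exptn[\nabla_p A_\infty]$ enters downstream: the variability bound in Section \ref{subsection:variability analysis} and the $\omega_k^{\rm (II)}$ analysis in Lemma \ref{L2} invoke the almost-sure bound $\vert\widehat{\nabla A_\infty}(p_{k-1})\vert\le G_{\max}$, where $\widehat{\nabla A_\infty}$ averages exactly these per-arrival gradients, idle transitions included; and Corollary \ref{corr:bounds_on_gradients} together with the regret constant $\sigma$ in Lemma \ref{lemma:Regret:Term 1} requires $\nabla_p\exptn[A_\infty(p)]\le G_{\max}$. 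Integrability alone does not deliver the constant: starting from an empty system, $\exptn[F^{-1}_p(\zeta;0)]=e^{\theta_1 p}/\Lambda$, whose $p$-derivative $\theta_1 e^{\theta_1 p}/\Lambda$ exceeds $\theta_1/\theta_2$ whenever $\Lambda e^{-\theta_1 p}<\theta_2$, so in light traffic even the expectation bound of Corollary \ref{corr:bounds_on_gradients} fails with $G_{\max}=\theta_1/\theta_2$. A genuine fix needs either a larger, parameter-dependent constant for the interarrival part (of order $\theta_1/\theta_2+(\theta_1/\Lambda)e^{\theta_1\overline{p}}$, handled in expectation) or a reworking of the moment, bias, and regret estimates to tolerate an unbounded but light-tailed per-arrival gradient; your closing paragraph gestures at this but does not supply it.
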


Returning to the case of a general joining probability, we choose $\overline{W}_0 \sim \overline{W}_{\infty}(p)$. Then, according to the setup of Assumption \ref{assumption:gradient_bounds}, 
    \begin{align*}
     \overline{W}_i \overset{\mathrm{d}}{=} \overline{W}_{\infty}(p),\quad A_i = F^{-1}_p\left(\zeta_i; \overline{W}_{i-1}\right) \overset{\mathrm{d}}{=} A_{\infty}(p),
    \end{align*} 
    with $\nabla_p \overline{W}_i \in \left[-G_{\max}, 0\right]$, $\nabla_p A_i \in \left[0, G_{\max}\right]$. Since this holds for any sample path, we conclude the validity of the following corollary. 
\begin{corlly}\label{corr:bounds_on_gradients} 
If Assumptions~\ref{assumption: workload der: bounds} and \ref{assumption:gradient_bounds} hold, then
\begin{align*}
    \nabla_p \exptn\left[\overline{W}_{\infty}(p)\right] \in \left[-G_{\max}, 0\right]\ , \nabla_p \exptn\left[A_{\infty}(p)\right] \in \left[0, G_{\max}\right].
\end{align*} 
\end{corlly}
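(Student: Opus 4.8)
The plan is to deduce the two bracketed inclusions from the almost-sure sample-path bounds of Assumption~\ref{assumption:gradient_bounds} by first interchanging $\nabla_p$ and $\exptn$, and then invoking monotonicity of the expectation. To make the sample-path derivative of the \emph{stationary} quantities meaningful, I would set up the coupling already indicated before the statement: initialise the recursion with $\overline{W}_0 \sim \overline{W}_{\infty}(p)$ and drive the interarrival times of the systems at all nearby prices by the common seed sequence $\{\zeta_k\}$, using $A_k = F^{-1}_p(\zeta_k;\overline{W}_{k-1})$. Under this common-random-numbers construction every $\overline{W}_i$ is distributed as $\overline{W}_{\infty}(p)$ and every $A_i$ as $A_{\infty}(p)$, while the recursive gradient representation of Section~\ref{sec:queue_grad} realises $\nabla_p \overline{W}_{\infty}(p)$ and $\nabla_p A_{\infty}(p)$ as genuine pathwise derivatives; Assumption~\ref{assumption:gradient_bounds} then gives $\nabla_p \overline{W}_{\infty}(p) \in [-G_{\max},0]$ and $\nabla_p A_{\infty}(p) \in [0,G_{\max}]$ almost surely.

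The crux of the argument is the interchange $\nabla_p\exptn[\,\cdot\,]=\exptn[\nabla_p\,\cdot\,]$. I would justify it by dominated convergence on the difference quotients: along the coupled paths the maps $p\mapsto \overline{W}_{\infty}(p)$ and $p\mapsto A_{\infty}(p)$ are almost surely differentiable, so by the mean value theorem the quotients $h^{-1}\big(\overline{W}_{\infty}(p+h)-\overline{W}_{\infty}(p)\big)$ and $h^{-1}\big(A_{\infty}(p+h)-A_{\infty}(p)\big)$ are bounded in absolute value by the constant $G_{\max}$, which is trivially integrable and independent of $h$. Dominated convergence then permits passing the limit inside the expectation, yielding $\nabla_p\exptn[\overline{W}_{\infty}(p)]=\exptn[\nabla_p\overline{W}_{\infty}(p)]$ and likewise for $A_{\infty}$; the latter identity is exactly the content of Proposition~\ref{propn:der[exptn A] = exptn[der A]}, and the same reasoning applies verbatim to the workload.

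With the interchange in hand, the conclusion is immediate: applying $\exptn[\cdot]$ to the a.s.\ inclusions of Assumption~\ref{assumption:gradient_bounds} and using monotonicity gives $\exptn[\nabla_p\overline{W}_{\infty}(p)]\in[-G_{\max},0]$ and $\exptn[\nabla_p A_{\infty}(p)]\in[0,G_{\max}]$, which are precisely the claimed bounds on $\nabla_p\exptn[\overline{W}_{\infty}(p)]$ and $\nabla_p\exptn[A_{\infty}(p)]$. I expect the main obstacle to lie not in this final monotonicity step but in rigorously justifying that the stationary pathwise derivative is well-defined and that the dominating bound $G_{\max}$ is uniform in $p$: one must show that the contribution of the initial condition to the gradient recursion is forgotten as the number of effective arrivals grows, which is precisely where the contraction condition $\gamma_m=\exptn[(\Xi[\zeta])^m]<1$ of Assumption~\ref{assumption: workload der: bounds} enters, guaranteeing that perturbations propagated through the recursion decay geometrically and that the coupled stationary gradients exist and remain within $[-G_{\max},0]$ and $[0,G_{\max}]$ respectively.
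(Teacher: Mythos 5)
Your proposal is correct and takes essentially the same approach as the paper: initialize the workload recursion in stationarity so that $\overline{W}_i \overset{\mathrm{d}}{=} \overline{W}_{\infty}(p)$ and $A_i \overset{\mathrm{d}}{=} A_{\infty}(p)$, invoke the almost-sure sample-path gradient bounds of Assumption~\ref{assumption:gradient_bounds}, and pass the bounds through the expectation via an interchange of $\nabla_p$ and $\exptn$. The paper states this interchange without detail (the dominated-convergence justification you spell out is exactly the one given for $A_{\infty}$ in Proposition~\ref{propn:der[exptn A] = exptn[der A]}), so your write-up is simply a more explicit rendering of the same argument.
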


\begin{figure}[htbp]
    \centering
    \includegraphics[width=0.8\textwidth]{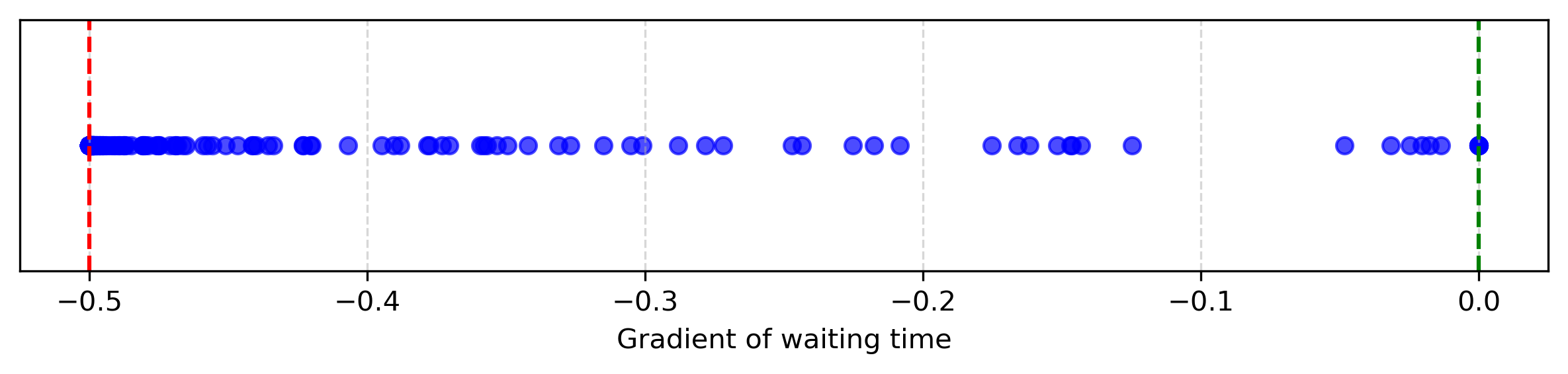}
    \caption{Visualization of Lemma \ref{lemma:example:gradient_bounds}. We simulate an M/G/1\,+\,$H(p, V)$ queue with price $p = 20$, and model parameters $\Lambda = 20$, $\theta_1 = 0.1$, and $\theta_2 = 0.2$.
For each effective arrival $i$, we record the value of the price derivative of the waiting time, denoted $\nabla_p \overline{W}_i$.
The plot shows that $\nabla_p \overline{W}_i$ consistently lies in the interval $\left[ -\frac{\theta_1}{\theta_2}, 0 \right] = [-0.5, 0]$ for all $i$.}
    \label{fig:visualization:workload_gradient}
\end{figure}

\begin{lemma} \label{lemma:DCT:prerequisite:1}
    If $p \in \mathcal{P}$, $\zeta \sim \text{\em Uniform}[0,1]$, and $\overline{W}$ is such that $\exptn\big[\overline{W}\big] < \infty$, then $\exptn \big[F^{-1}_p\big(\zeta; \overline{W}\big)\big] < \infty$.
\end{lemma}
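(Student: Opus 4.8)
The plan is to dominate $F^{-1}_p(\zeta; \overline{W})$ pathwise by a random variable of obviously finite mean and then integrate. Starting from \eqref{eqn:general:inverse:F_p(zeta;w)}, I write $F^{-1}_p(\zeta; w) = J^{-1}(Z; p, w)$ with $Z := -\tfrac{1}{\Lambda}\log(1-\zeta)$; since $\zeta \sim \text{Uniform}[0,1]$, the variable $Z$ is exponentially distributed with rate $\Lambda$, so in particular $\exptn[Z] = 1/\Lambda$.

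The crux is an upper bound on the inverse $J^{-1}(\cdot; p, w)$. Splitting the integral in \eqref{eqn:general:F_p(l;w)} at $t = w$ gives, for $\ell \geq w$, the identity $J(\ell; p, w) = \int_0^w H(p, s)\,\mathrm{d}s + (\ell - w)\,H(p, 0)$; that is, beyond $\ell = w$ the map $J(\cdot; p, w)$ increases linearly with slope $H(p, 0)$. Evaluating at $\ell = w + z/H(p, 0)$ yields $J\big(w + z/H(p, 0); p, w\big) = \int_0^w H(p, s)\,\mathrm{d}s + z \geq z$, and since $J(\cdot; p, w)$ is nondecreasing this gives the pathwise bound $J^{-1}(z; p, w) \leq w + z/H(p, 0)$ for all $z \geq 0$. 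Consequently $F^{-1}_p(\zeta; \overline{W}) \leq \overline{W} + Z/H(p, 0)$ for every realization, and taking expectations (no independence of $\zeta$ and $\overline{W}$ is needed, as the inequality holds sample-path-wise and $\exptn[Z]$ depends only on the uniform marginal of $\zeta$) yields $\exptn\big[F^{-1}_p(\zeta; \overline{W})\big] \leq \exptn[\overline{W}] + 1/(\Lambda H(p, 0)) < \infty$ by hypothesis.

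The one point requiring care — and the only genuine obstacle — is the positivity $H(p, 0) > 0$, which I use to divide by the slope. It is indispensable: were $H(p, 0) = 0$, the effective arrival rate would vanish as the workload drains to zero, $J(\cdot; p, w)$ would remain bounded, and $A$ would be infinite with positive probability, making the statement false. This positivity holds in every meaningful instance (and explicitly as $H(p, 0) = e^{-\theta_1 p} > 0$ in Example~\ref{example: Psi and H}); I would either treat it as a standing well-posedness requirement or record it alongside Assumption~\ref{assumption:H}. With that in place the remainder of the argument is routine, so I anticipate no further difficulty.
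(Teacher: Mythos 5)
Your proof is correct and arrives at exactly the paper's bound $\exptn\big[\overline{W}\big] + 1/(\Lambda H(p,0))$, but by a genuinely different mechanism. The paper argues probabilistically: it splits according to whether the effective interarrival time exceeds the initial workload, notes that on the second event the workload has drained to zero, and invokes the memoryless property of the Poisson stream to identify the overshoot $F^{-1}_p\big(\zeta;\overline{W}\big) - \overline{W}$ as an $\mathrm{Exp}\big(\Lambda H(p,0)\big)$ random variable sampled independently of $\big(\zeta,\overline{W}\big)$. You instead argue analytically on the seed representation \eqref{eqn:general:inverse:F_p(zeta;w)}: since $J(\cdot\,;p,w)$ grows linearly with slope $H(p,0)$ beyond $\ell = w$, its (generalized) inverse satisfies $J^{-1}(z;p,w) \leq w + z/H(p,0)$, giving the single pathwise domination $F^{-1}_p\big(\zeta;\overline{W}\big) \leq \overline{W} + Z/H(p,0)$ with $Z = -\Lambda^{-1}\log(1-\zeta) \sim \mathrm{Exp}(\Lambda)$. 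This buys two things: it dispenses with the case split entirely, and, as you note, it needs no independence between $\zeta$ and $\overline{W}$, whereas the paper's distributional identity for the overshoot implicitly uses that the seed remains uniform conditionally on the workload (true in the application, but not demanded by the lemma's statement). Conversely, the paper's route yields slightly more information — the exact conditional law of the overshoot rather than just a bound on its mean — which is in the spirit of the coupling arguments of Section \ref{sec:queue_coupling}. Your observation about $H(p,0) > 0$ is also well taken: the paper's own proof divides by $\Lambda H(p,0)$ without comment, and positivity does not follow from Assumption \ref{assumption:H}, so recording it as a standing well-posedness requirement would tighten both proofs.
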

\begin{proof}
    Observe the following two facts:
    \begin{enumerate}
        \item[(i)] Because the inverse of a probability distribution function is monotonically increasing, we have $\big\{\zeta \leq F_p\big(\overline{W};\overline{W}\big) \big\} = \big\{F^{-1}_p\big(\zeta;\overline{W}\big) \leq \overline{W}\big\}$. These are scenarios where the effective interarrival time is not larger than the initial workload $\overline{W}$.
        
        \item[(ii)] Similarly, $\big\{\zeta > F_p\big(\overline{W};\overline{W}\big) \big\} = \big\{F^{-1}_p\big(\zeta;\overline{W}\big) > \overline{W}\big\}$. These are scenarios where the effective interarrival time is larger than the initial workload. Therefore, the workload in the system hits 0 after time $\overline{W}$. By the memoryless property associated with the Poisson process, the remaining time $\big( F^{-1}_p\big(\overline{W}, \overline{W}\big) - \overline{W}\big)$ follows an $\text{Exp}\big(\Lambda H(p, 0)\big)$ distribution {(i.e., is exponentially distributed with mean $1/\Lambda H(p, 0)$)}.
    \end{enumerate}
    Let $\widetilde{\zeta} \sim \text{Uniform}[0,1]$ be independent of $\big(\zeta, \overline{W}\big)$. Then (i) and (ii) can mathematically be written as
    \begin{align*}
        & F^{-1}_p\big(\zeta;\overline{W}\big) \mathbbm{1}\big\{\zeta \leq F_p\big(\overline{W};\overline{W}\big) \big\} \leq_{\text{a.s.}} \overline{W},
        \\
        \overline{W} \leq_{\text{a.s.}} \ &F^{-1}_p\big(\zeta;\overline{W}\big) \mathbbm{1} \big\{\zeta > F_p\big(\overline{W};\overline{W}\big) \big\} \overset{\mathrm{d}}{=} \overline{W} + F^{-1}_p\big(\widetilde\zeta; 0\big).
    \end{align*}
    Upon combining the above, we conclude that
    \begin{align*}
        \exptn\big[F^{-1}_p\big(\zeta, \overline{W}\big)\big] \leq \exptn\big[\overline{W}\big] + \exptn\big[F^{-1}_p\big(\tilde\zeta; 0\big)\big] = \exptn\big[\overline{W}\big] + \frac{1}{\Lambda H(p,0)} < \infty.
    \end{align*}
    This proves the claim. 
\end{proof}

\begin{propn}\label{propn:der[exptn A] = exptn[der A]}
    For any $p \in \mathcal{P}$ we have
    $\nabla_p \exptn\left[A_{\infty}(p)\right] = \exptn\left[\nabla_p A_{\infty}(p)\right]$.
\end{propn}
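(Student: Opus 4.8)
The plan is to establish the interchange via the Dominated Convergence Theorem (DCT) applied to difference quotients, exploiting the pathwise (IPA) representation of $A_{\infty}(p)$ on a common probability space. First I would fix a reference probability space carrying the seeds $\{\zeta_k\}_{k \geq 1}$ and the service requirements $\{S_k\}_{k \geq 1}$, and realize the stationary quantities through the recursion described just before Corollary~\ref{corr:bounds_on_gradients}: initialize $\overline{W}_0 \sim \overline{W}_{\infty}(p)$ and set $A_k = F^{-1}_p(\zeta_k; \overline{W}_{k-1})$, $\underline{W}_k = (\overline{W}_{k-1} - A_k)^{+}$, $\overline{W}_k = \underline{W}_k + S_k$, so that $A_k \overset{\mathrm{d}}{=} A_{\infty}(p)$ and the whole sample path becomes a function of $p$. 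Integrability of the object being differentiated is guaranteed by Lemma~\ref{lemma:DCT:prerequisite:1} together with finiteness of $\exptn[\overline{W}_{\infty}(p)]$, which in turn follows from the stability established in Proposition~\ref{propn: stability condition}.

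Next I would verify almost-sure differentiability of $p \mapsto A_{\infty}(p)$ along the coupled path. The map $p \mapsto F^{-1}_p(\zeta; w)$ is differentiable by Assumption~\ref{assumption:H}(c) and the inverse representation \eqref{eqn:general:inverse:F_p(zeta;w)}; this differentiability propagates through the recursion, the only non-smooth ingredient being the reflection $(\cdot)^{+}$, which fails to be differentiable only on the event $\{\overline{W}_{k-1} = A_k\}$. Since the effective interarrival law is atomless, this event has probability zero, so the composition is differentiable almost surely, and its derivative coincides with the recursively-defined sample-path gradient $\nabla_p A_{\infty}(p)$ constructed in Section~\ref{sec:queue_grad}.

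Then I would bound the difference quotient uniformly. For $h$ small enough that $p + h \in \mathcal{P}$, the mean value theorem applied to the (a.s.\ differentiable) path gives $h^{-1}\big(A_{\infty}(p+h) - A_{\infty}(p)\big) = \nabla_p A_{\infty}(p + \theta h)$ for some random $\theta \in [0,1]$, and Assumption~\ref{assumption:gradient_bounds} yields $|\nabla_p A_{\infty}(p')| \leq G_{\max}$ for every $p' \in \mathcal{P}$, whence $\big|h^{-1}(A_{\infty}(p+h) - A_{\infty}(p))\big| \leq G_{\max}$ almost surely. The constant $G_{\max}$ is trivially integrable, so the DCT applies and delivers $\nabla \exptn[A_{\infty}(p)] = \lim_{h \to 0} \exptn\big[h^{-1}(A_{\infty}(p+h) - A_{\infty}(p))\big] = \exptn\big[\lim_{h \to 0} h^{-1}(A_{\infty}(p+h) - A_{\infty}(p))\big] = \exptn[\nabla_p A_{\infty}(p)]$.

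The main obstacle I anticipate is the coupling across the perturbation from $p$ to $p+h$: the uniform gradient bound of Assumption~\ref{assumption:gradient_bounds} is stated for a single sample path with a fixed initial workload, so care is needed to ensure that the stationary initialization $\overline{W}_0 \sim \overline{W}_{\infty}(p)$ can be held fixed while $p$ is perturbed (equivalently, that $A_{\infty}(p+h)$ is built from the same seeds and the same $\overline{W}_0$). The clean route is to argue the interchange first for the finite-horizon quantity $A_n = F^{-1}_p(\zeta_n; \overline{W}_{n-1})$, where the uniform pathwise bound holds verbatim and $n$ is fixed, and then pass to the limit $n \to \infty$ using stationarity together with the uniform-in-$p$ contraction $\gamma_m < 1$ from Assumption~\ref{assumption: workload der: bounds}, which controls the backward propagation of the workload perturbation through the recursion.
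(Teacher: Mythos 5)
Your proposal is correct and follows essentially the same route as the paper's own proof: a dominated convergence argument in which Lemma~\ref{lemma:DCT:prerequisite:1} supplies integrability, differentiability of $p \mapsto F^{-1}_p$ supplies a.s.\ differentiability of the path, and the uniform bound $G_{\max}$ from Assumption~\ref{assumption:gradient_bounds} (via the stationary coupling described before Corollary~\ref{corr:bounds_on_gradients}) serves as the dominating function for the difference quotients. If anything, your write-up is more careful than the paper's three-line proof — in particular, your handling of the non-differentiability at the reflection boundary and your finite-horizon-plus-contraction device for the fact that the stationary initialization $\overline{W}_0 \sim \overline{W}_{\infty}(p)$ cannot simply be held fixed as $p$ is perturbed are both points the paper passes over silently.
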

\begin{proof}
    The interchange of derivative and expectation is justified by using the dominated convergence theorem (see e.g.\ \cite{Billingsley1995}). With $\zeta \sim \text{Uniform}[0,1]$, recall that $A_{\infty}(p) = F^{-1}_p\big(\zeta; \overline{W}_{\infty}(p)\big)$, where by Proposition \ref{propn: stability condition} we have $\exptn\big[\overline{W}_\infty(p)\big] < \infty$. Now,
    \begin{enumerate}
        \item By Lemma \ref{lemma:DCT:prerequisite:1}, $\exptn\left[F^{-1}_p\left(\zeta; \overline{W}_{\infty}(p)\right)\right] < \infty$.
        \item By Assumption \ref{assumption:Psi}, $F^{-1}_p\left(\zeta; \overline{W}_{\infty}(p)\right)$ is continuously differentiable in $p$.
        \item By Assumption \ref{assumption:gradient_bounds}, $\nabla_p F^{-1}_p\left(\zeta; \overline{W}_{\infty}(p)\right) \leq_{\text{a.s.}} G_{\max}$.
    \end{enumerate}
    Observe that this means that the hypotheses of the dominated convergence theorem are met, and the result therefore follows.
\end{proof}

We emphasize that Proposition \ref{propn:der[exptn A] = exptn[der A]} is crucial for the construction of an estimator for the gradient $\nabla\Psi(p)$, as needed in the recursive algorithm \eqref{recur}. Suppose one aims to obtain an estimator for the gradient of the expectation of a random variable with respect to a given parameter --- something that is often highly non-trivial to compute.
Infinitesimal Perturbation Analysis (IPA) \cite{Glasserman1990} constructs a pathwise derivative whose expectation coincides with the derivative of the stationary expectation under suitable regularity conditions. In our setting, these conditions are verified in Proposition \ref{propn:der[exptn A] = exptn[der A]}.
This allows us to build the estimator by essentially `differentiating the sample path', thus yielding a pathwise gradient estimate.

\subsection{Coupling of systems with different initial workloads}\label{sec:queue_coupling}

We continue by providing some additional queueing theoretic results which will be central to the proofs in this paper.
We say that two M/G/1 + $H(p, V)$ queues are \textit{coupled} if there exists a probability space $\big(\Omega, \sigmaf, \prob\big)$ with 
\begin{itemize}
    \item[$\circ$]
random variables $\overline{W}^{1}_0, \overline{W}^{2}_0$, representing the workloads at time $0$ in system 1 and system~2, respectively, 
 \item[$\circ$] the sequence $\big\{ \zeta_n \big\}_{n \geq 1}$, representing the {\it common} interarrival time seeds, and 
  \item[$\circ$] the sequence $\big\{S_n\big\}_{n \geq 1}$, representing the {\it common} service requirements,
\end{itemize}and if the workloads just after effective arrivals $\big\{\overline{W}^{1}_n\big\}_{n \geq 1}$ and $\big\{\overline{W}^{2}_n\big\}_{n \geq 1}$ in the two systems satisfy the Lindley recursion, for $i=1,2$ and $n \geq 1$,
    \begin{align*}
        \overline{W}^{i}_n = \Big(\overline{W}^{i}_{n-1} + S_n - F^{-1}_p\big(\zeta_n; \overline{W}^{i}_{n-1}\big) \Big)^{+}.
    \end{align*}
   The following proposition provides bounds on various quantities that capture the difference between the two coupled queues, typically expressed in terms of the distinct initial workloads $\overline{W}^{1}_0$ and $\overline{W}^{2}_0$.  Let $\left\{A^{i}_k\right\}_{k \geq 1}$ for i = 1,2 be the effective interarrival times and let $\big\{\widetilde{A}^{i}_k\big\}_{k \geq 1}$ for $i=1,2$ be effective arrival times in the two queues. Also, let $N_i(\cdot)$, for $i=1,2$, be the counting process of the effective arrivals in queue $i$. The proof of the proposition is given in Appendix~\ref{appendix:proof:propn:model:theoretical_results}.

\begin{propn} \label{propn:model:theoretical_results}
    Suppose that two {\em M/G/1\,+}\,$H(p,V)$ queues are coupled with finite random initial workloads $\overline{W}^1_0 < \infty$, and $\overline{W}^2_0 < \infty$. For $m \geq 1$, let $\gamma_m$ be as defined in Assumption \ref{assumption: workload der: bounds}. Then,
    \begin{enumerate}
        \item[(a)] For any $n \geq 1$, and $m \geq 0$,
        \begin{align*}
            \exptn \big\vert \overline{W}^{1}_{n} - \overline{W}^{2}_{n} \big\vert^m \leq \gamma_m^{n}  \exptn\big\vert \overline{W}^{1}_0 - \overline{W}^{2}_0 \big\vert^m.
        \end{align*}
    
        \item[(b)]  For any $n \geq 1$,
        \begin{align*}
            \exptn\left\vert A^{1}_n - A^{2}_n\right\vert &\leq \gamma_1^n \exptn\left\vert \overline{W}^{1}_0 - \overline{W}^{2}_0 \right\vert,
            \\
            \exptn \big\vert \widetilde{A}^{1}_n - \widetilde{A}^{2}_n \big\vert &\leq \left( \frac{1-\gamma_1^{n}}{1-\gamma_1}\right) \exptn\left\vert \overline{W}^{1}_0 - \overline{W}^{2}_0 \right\vert.
        \end{align*}

        \item[(c)]  For any $T^* > 0$,
        \begin{align*}
            \exptn \big\vert N_1\left(0,T^*\right] - N_2\left(0, T^*\right]\big\vert \leq \frac{\Lambda H(\underline{p}, 0)}{1-\gamma_1} \exptn\left\vert \overline{W}^{1}_0 - \overline{W}^{2}_0 \right\vert.
        \end{align*}

        \item[(d)] Let $g(\cdot) = x^m + a_1 x^{m-1} + \cdots + a_m$ be a degree-$m$ polynomial. Let $T$ be a random variable with support $[T^*, \infty)$. Then,
        \begin{align*}
            \exptn\left[\frac{1}{g\big(N_1 \left(0, T\right]\big)}\right] \le  \mathcal{O}\left(\frac{1}{{T^*}^m}\right).
        \end{align*} 
    \end{enumerate}
\end{propn}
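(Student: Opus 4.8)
The plan is to treat the pathwise contraction in part~(a) as the workhorse and to deduce (b)--(d) from it together with the thinned-Poisson structure of the effective arrival process. Throughout I use that, under the coupling, the seeds $\{\zeta_n\}$ are i.i.d.\ and independent of the two initial workloads.

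\emph{Part (a).} I would write one step of the coupled recursion through the map $r_n(w):=w-F^{-1}_p(\zeta_n;w)$, so that $\overline W^i_n=\big(r_n(\overline W^i_{n-1})+S_n\big)^{+}$. By Assumption~\ref{assumption: workload der: bounds}, $r_n'(w)=1-\nabla_w F^{-1}_p(\zeta_n;w)\in(0,\Xi[\zeta_n])$, so $r_n$ is nondecreasing and $\Xi[\zeta_n]$-Lipschitz; composing with the $1$-Lipschitz maps $x\mapsto x+S_n$ and $x\mapsto x^{+}$ shows $w\mapsto(r_n(w)+S_n)^{+}$ is $\Xi[\zeta_n]$-Lipschitz. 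Hence $\big|\overline W^1_n-\overline W^2_n\big|\le \Xi[\zeta_n]\,\big|\overline W^1_{n-1}-\overline W^2_{n-1}\big|$ almost surely, and iterating gives, pathwise,
\[
\big|\overline W^1_n-\overline W^2_n\big|^m\le \Big(\textstyle\prod_{j=1}^n \Xi[\zeta_j]\Big)^{m}\big|\overline W^1_0-\overline W^2_0\big|^m .
\]
Taking expectations and using independence factorises the product into $\prod_{j=1}^n\exptn[\Xi[\zeta_j]^m]=\gamma_m^{\,n}$, which is the claim.

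\emph{Part (b).} Since the effective interarrival time is nondecreasing in the initial workload with slope below one, i.e.\ $0\le\nabla_w F^{-1}_p(\zeta;\cdot)<1$ (Assumption~\ref{assumption: workload der: bounds}), the map $F^{-1}_p(\zeta_n;\cdot)$ is $1$-Lipschitz, so $\big|A^1_n-A^2_n\big|\le\big|\overline W^1_{n-1}-\overline W^2_{n-1}\big|$ almost surely. Taking expectations and applying part~(a) with $m=1$ controls $\exptn|A^1_n-A^2_n|$ by the geometric factor carried by the previous workload difference. For the cumulative times I write $\widetilde A^i_n=\sum_{k=1}^n A^i_k$, bound $\big|\widetilde A^1_n-\widetilde A^2_n\big|\le\sum_{k=1}^n\big|A^1_k-A^2_k\big|$ termwise, and sum the resulting geometric series $\sum_{k=1}^n\gamma_1^{k-1}=(1-\gamma_1^{\,n})/(1-\gamma_1)$ to obtain the stated cumulative bound.

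\emph{Part (c).} For a fixed realisation assume without loss of generality $N_1(0,T^*]\ge N_2(0,T^*]$ and set $D:=N_1(0,T^*]-N_2(0,T^*]$. The $D$ arrivals of queue~$2$ indexed $N_2(0,T^*]+1,\dots,N_1(0,T^*]$ all occur after $T^*$ but no later than $\widetilde A^2_{N_1(0,T^*]}$, while the matching queue-$1$ arrival already occurred by $T^*$; hence $\widetilde A^2_{N_1(0,T^*]}-T^*\le\big|\widetilde A^1_{N_1(0,T^*]}-\widetilde A^2_{N_1(0,T^*]}\big|$. Thus $D$ is dominated by the number of effective arrivals of queue~$2$ in a window of length equal to the arrival-time discrepancy. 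Because the effective intensity $\Lambda H(p,V(t))$ never exceeds $\Lambda H(\underline p,0)$, I would compensate the counting process $N_2$ and apply optional stopping (equivalently, dominate its points by a rate-$\Lambda H(\underline p,0)$ Poisson process) to turn the random window length into $\Lambda H(\underline p,0)$ times the expected discrepancy, and then bound the discrepancy uniformly in the (random) index by $(1-\gamma_1)^{-1}\exptn|\overline W^1_0-\overline W^2_0|$ from part~(b). The delicate point here is that the window length is correlated with the arrivals inside it, which is exactly what the compensator/optional-stopping step resolves.

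\emph{Part (d).} As $g$ is monic of degree $m$ it is positive and increasing beyond some $j_0$, so $g(j)\ge c\,j^m$ for $j\ge j_0$ and $1/g$ is bounded on $\{0,\dots,j_0\}$. Since $T\ge T^*$ and $N_1(0,\cdot]$ is nondecreasing, $1/g\big(N_1(0,T]\big)\le 1/g\big(N_1(0,T^*]\big)$ once $N_1(0,T^*]\ge j_0$, reducing matters to the fixed window. Writing $N:=N_1(0,T^*]$, I would show $N$ is of order $T^*$ with a controlled lower tail via a regeneration argument: by Proposition~\ref{propn: stability condition} the M/G/1\,$+\,H(p,V)$ queue regenerates at busy-period starts with finite mean cycle length carrying a positive mean number of effective arrivals, so renewal–reward gives $\prob(N<\epsilon T^*)$ small for suitably small $\epsilon$. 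Splitting $\exptn[1/g(N)]$ over $\{N\ge\epsilon T^*\}$, where $1/g(N)\le c^{-1}(\epsilon T^*)^{-m}=\mathcal O({T^*}^{-m})$, and its complement then yields the bound. I expect the main obstacle of the whole proposition to lie precisely in this last step: establishing a lower-tail bound on $N$ (equivalently an upper-tail bound on the dependent sum $\sum_{k\le n}A_k$) that decays fast enough relative to ${T^*}^{-m}$, since the interarrival times are neither independent nor identically distributed and are coupled to the workload; carrying out the regeneration/concentration estimate to the required order is where the bulk of the technical work sits.
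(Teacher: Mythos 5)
Parts (a)--(c) of your proposal are correct and follow essentially the paper's own route. In (a), your Lipschitz-composition argument --- $r_n(w)=w-F^{-1}_p(\zeta_n;w)$ has derivative in $(0,\Xi[\zeta_n])$ by Assumption~\ref{assumption: workload der: bounds}, and $x\mapsto x+S_n$, $x\mapsto x^{+}$ are $1$-Lipschitz --- is a cleaner packaging of the paper's three-case analysis (both workloads stay positive / the smaller hits zero / both hit zero, each case treated via the mean-value theorem); the factorization over the i.i.d.\ seeds is then identical. Part (b) coincides with the paper's proof (modulo an off-by-one in the exponent, $\gamma_1^{n-1}$ versus the claimed $\gamma_1^{n}$, on which the paper's own indexing also wavers; the cumulative bound comes out the same either way). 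In (c) you differ only in bookkeeping: the paper introduces the coupling index $K$ and terminal lag $\Delta=\widetilde A^1_K-\widetilde A^2_K$ and identifies $N_2(0,t]-N_1(0,t]=N_2(t-\Delta,t]$, whereas you count the surplus arrivals by index; both arguments then dominate the count over the lag window by a rate-$\Lambda H(\underline p,0)$ Poisson variable and invoke (b), and both need the same two repairs you partially flag: the discrepancy at the random index is controlled because $\widetilde A^1_n-\widetilde A^2_n$ is monotone in $n$ on each ordering event (so it is bounded by the terminal lag, whose mean is at most $(1-\gamma_1)^{-1}\exptn\big\vert\overline W^1_0-\overline W^2_0\big\vert$ by monotone convergence), and the count over the correlated random window is handled in the paper by conditioning on $\Delta$.

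Part (d) is the only place with a genuine gap, and you name it yourself: your argument reduces everything to a lower-tail estimate $\prob\big(N_1(0,T^*]<\epsilon T^*\big)=\mathcal{O}\big((T^*)^{-m}\big)$, which you do not prove. You should know, however, that the paper's own proof of (d) is no more complete at exactly this point: it writes $1/N^m = T^{-m}(T/N)^{m}$, applies Cauchy--Schwarz to split off $\exptn[T^{-2m}]\le (T^*)^{-2m}$, and then asserts that $\exptn\big[(N_1(0,T]/T)^{-2m}\big]$ is bounded, the only justification being the renewal--reward almost-sure limit $N_1(0,T]/T\to 1/\exptn[A_\infty(p)]$; almost-sure convergence does not bound inverse moments, so this is the same missing tail / uniform-integrability control in disguise (and both versions silently assume $g$ has no roots at attainable integer values of $N$, even though the paper later applies (d) with $g(x)=(x-2)(x-3)$). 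So your skeleton for (d) --- cap $1/g$ near the origin, use $g(j)\ge c\,j^m$ for large $j$, split on $\{N\ge\epsilon T^*\}$ --- is sound and arguably more transparent than the paper's, but as written it is incomplete rather than wrong, and completing it would require a quantitative concentration estimate for the dependent interarrival sums that neither you nor the paper supplies.
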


The coupling featuring in Proposition \ref{propn:model:theoretical_results} is illustrated by Figure \ref{fig:coupled-queues}.

\begin{figure}
    \centering
    \includegraphics[width=0.8\textwidth]{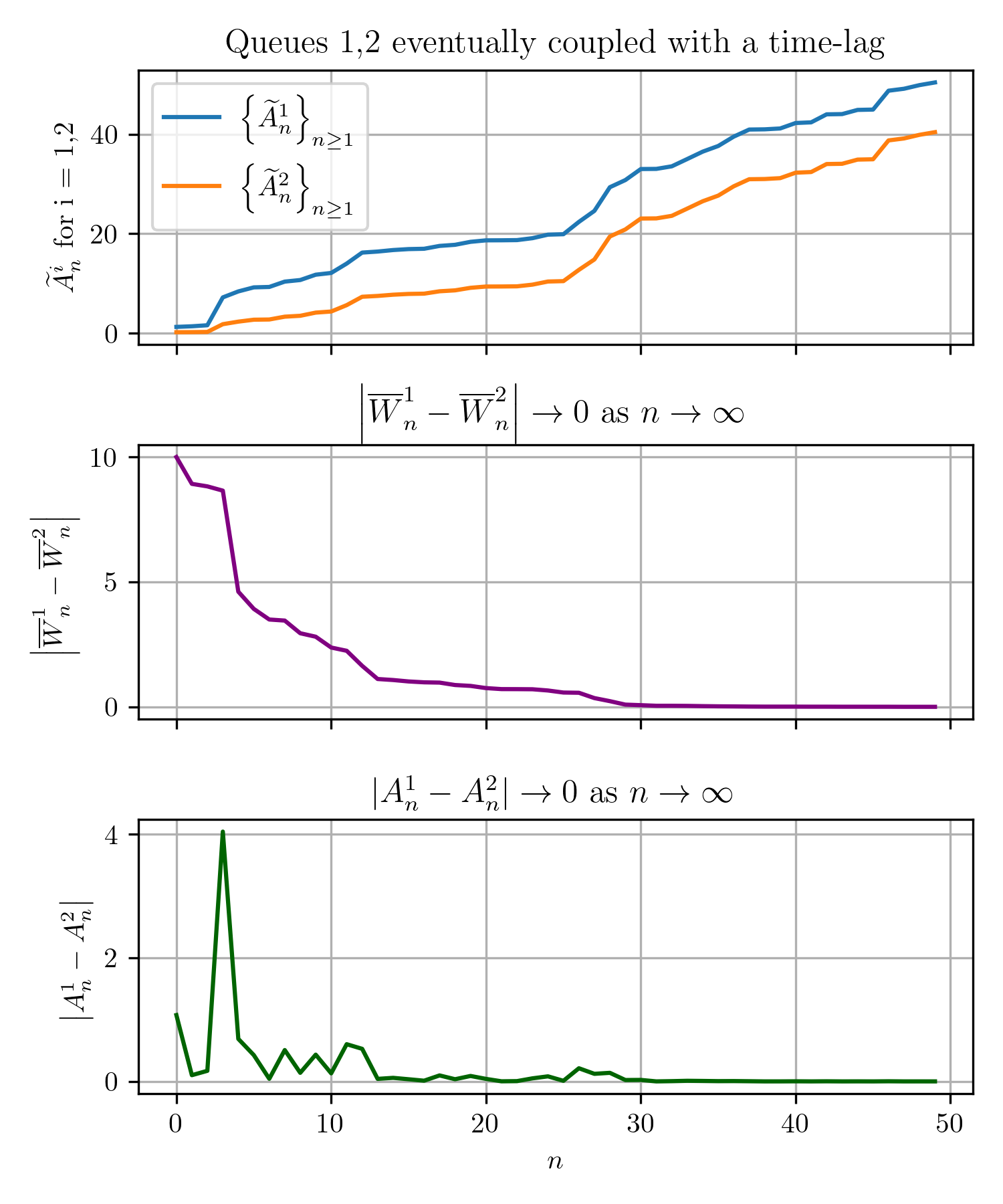}
    \caption{Visualizations of Proposition \ref{propn:model:theoretical_results}. The top plot illustrates the observation from the proof of part (b) that the two queues with distinct initial workloads are eventually coupled with a time-lag. We observe that $\vert \widetilde{A}^1_n - \widetilde{A}^2_n \vert$ does not change from some point on. The second and third plots illustrate parts (a) and (b).}
    \label{fig:coupled-queues}
\end{figure}

\section{Gradient {estimator}}\label{sec:grad_est}

Recalling the gradient estimator  \eqref{eqn:psi_hat}, the main objectives of this section are (i)~to show how its components are computed and (ii)~to establish almost \cls{sure} convergence of the estimator when the sampling window increases.

\medskip

In Section \ref{sec:alg} we introduced the random variables $\{T_k\}_{k \geq 1}$ and $\{\overline{T}_k\}_{k \geq 1}$ corresponding to the window-sizes. In this section, we discuss their construction. Let $\left\{ T^*_k \right\}_{k \geq 1} \in \mathbb{R}_+$ be a strictly increasing deterministic sequence, chosen by the service operator. This sequence is given as input to the algorithm and can be seen as a design choice.
For $k \geq 1$, let $N_k$ be the random variable which denotes the number of effective arrivals during the $k$-th iteration, i.e., during the time interval $\big[\overline{T}_{k-1}, \overline{T}_k\big)$, and let $\widetilde{N}_k = N_1 + \cdots + N_k$. To avoid degenerate estimators, we would like at least one arrival in each sampling window. However, since the timing of the first arrival after time $T_k^*$ is not known, the duration of the sampling window $\big[\overline{T}_{k-1}, \overline{T}_{k-1}+T_k\big)$ is random. Specifically, the window closes at the time of the first arrival after the deterministically chosen $T_k^*$:
\begin{enumerate}
    \vspace{-0.2cm}
    \item[$\circ$] $T_k \geq_{\text{a.s.}} T^*_k$, and
    \vspace{-0.2cm}
    \item[$\circ$] there is an effective arrival at time $\overline{T}_k$, i.e., $\overline{T}_k = A_{\widetilde{N}_k}$.
    \vspace{-0.2cm}
\end{enumerate}
\begin{remark}\label{rem:T_choice}
    Note that the construction of the sampling windows allows for some flexibility. We primarily control the window duration; however, because the number of arrivals is random, there is a non-zero probability of observing zero arrivals, which would result in an unbounded arrival rate estimator. To address this, we extend the window until the first arrival occurs after the specified threshold $T_k^*$. Alternatively, one could fix the number of arrivals in each window as a deterministic increasing sequence. While this leads to a similar analysis, it makes the window duration random, which has distinct implications for the convergence and regret analysis in the following sections. We choose to fix window sizes as this seems more natural in a continuous time setting, and it is also more convenient in terms of the mathematical analysis of the algorithm performance.
\end{remark}

We detail later how one could pick values $\left\{T^*_k\right\}_{k \geq 1}$ so that in the process of learning the optimal price $p^{*}$, the regret of enforcing suboptimal prices remains well controlled. 

A natural estimator for $\exptn\left[A_{\infty}(p_{k-1})\right]$ is the sample average of effective interarrival times observed in the $k$-th iteration, i.e.,
\begin{align} \label{eqn:estimator_exptn_A}
    \widehat{A_{\infty}}(p_{k-1}) := \frac{T_k}{N_k} = \frac{1}{N_k} \sum_{i=\widetilde{N}_{k-1}+1}^{\widetilde{N}_k} A_i.
\end{align}

It is inherently challenging to define an estimator for $\nabla_p \exptn\left[A_{\infty}(p_{k-1})\right]$. On the other hand, a natural estimator for $\exptn\left[\nabla_p A_{\infty}(p_{k-1})\right]$ is the sample average of price-derivatives of observed effective interarrival times. We can therefore leverage Proposition~\ref{propn:der[exptn A] = exptn[der A]} to justify using the latter as a proxy for estimating the former. Recall that for $i \in \big\{\widetilde{N}_{k-1}+1, \cdots, \widetilde{N}_k\big\}, \ A_i = F^{-1}_p\big(\zeta_i; \overline{W}_{i-1} \big)$. We therefore define {the IPA-type estimator}
\begin{align} \label{eqn:estimator_nabla_exptn_A}
    \widehat{\nabla A_{\infty}}(p_{k-1}) := \frac{1}{N_k} \sum_{i=\widetilde{N}_{k-1}+1}^{\widetilde{N}_k} \nabla_p F^{-1}_p\left(\zeta_i; \overline{W}_{i-1} \right).
\end{align}
Evaluating $\nabla_p F^{-1}_p\big(\zeta_i; \overline{W}_{i-1} \big)$ through Equation \eqref{eqn:general:price der_inverse_F_p(zeta;w)} requires knowledge of $\nabla_p \overline{W}_{i-1}$, the computation of which we now explain.
First, we set $\nabla_p \overline{W}_{\widetilde{N}_{k-1}} := 0$.
Then for any $i \in \big\{\widetilde{N}_{k-1}+1, \cdots, \widetilde{N}_{k}\big\}$, $\nabla_p\overline{W}_{i}$ can be computed through the recursion
\begin{align*}
    \nabla_p \overline{W}_{i} =
    \begin{cases}
        \nabla_p \overline{W}_{i-1} - \nabla_p F^{-1}_p\left(\zeta_i; \overline{W}_{i-1}\right) \hspace{1 cm} &\text{if} \ \underline{W}_i > 0,
        \\
        0  &\text{if } \underline{W}_i = 0.
    \end{cases}
\end{align*}

Combining these statistics, we obtain the estimator $\widehat{\nabla \Psi}(p)$, defined by
\begin{align*}
\widehat{\nabla\Psi}(p):= \frac{1}{\widehat{A_{\infty}}(p)} - p \frac{\widehat{\nabla A_{\infty}}(p)}{\widehat{A_{\infty}}(p)^2},
\end{align*}
as given in \eqref{eqn:psi_hat}.
Under the assumptions made above we have the following result regarding consistency of the gradient estimator. 

\begin{propn}\label{propn:estimator_is_consistent}
    {Consider a window of length $T$ on which price $p$ is enforced. Then, under Assumption \ref{assumption:H}, $\widehat{\nabla \Psi}(p) \overset{{\rm a.s.}}{\longrightarrow} \nabla\Psi(p)$ as $T \rightarrow \infty$.}
\end{propn}

The proof relies on applying a law of large numbers for regenerative processes, and is deferred to {Appendix \ref{appendix:proof:propn:estimator_is_consistent}}.

\section{Convergence analysis of the learning algorithm} \label{section:convergence_analysis}

In this section we focus on the analysis of the proposed gradient estimator. 
Specifically, upper bounds on the bias and the variability of the gradient estimator are established.

Recall that, for $k \geq 1$, our price update equation is given by \eqref{recur}. 
Define a filtration $\left\{\mathcal{F}_n \right\}_{n \geq 0}$ as follows:
\begin{align*}
    \mathcal{F}_{0} = \emptyset,
    \quad
    \mathcal{F}_{n} &= \sigma\Big(\widehat{\nabla\Psi} (p_0), \cdots, \widehat{\nabla\Psi}(p_{n-1})\Big)\, \ \text{for } n \geq 1.
\end{align*}

Let $\beta_k$ denote the bias of the gradient estimator in the $k$-th iteration of the algorithm, and let $\nu_k$ denote its variability, i.e.,
\begin{align}
    \beta_k &:= \exptn\left[\widehat{\nabla\Psi}(p_{k-1}) \vert \ \mathcal{F}_{k-1}\right] - \nabla\Psi(p_{k-1}), \label{eqn:bias:beta}
    \\
    \nu_k &:= \exptn\left[\widehat{\nabla\Psi}(p_{k-1})^2\right]. \label{eqn:variability:nu}
\end{align}
The following two subsections study how this bias and variability behave as $k$ goes to $\infty$. In particular, we establish the following proposition. The proofs of the two upper bounds are  detailed in the following two subsections.

\begin{propn}\label{propn:bias_var}
There exists constants $C_1,C_2 > 0$ such that, for all $k\geq 1$,
\begin{align}\label{eqn:var_bound}
    \nu_k &\leq C_1,\\
\label{eqn:bias_bound}
    |\beta_k| &\leq C_2  \frac{\eta_{k-1}}{{T^*_{k-1}}^2}.
\end{align}
\end{propn}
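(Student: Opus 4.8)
The plan is to prove the two inequalities separately, establishing the variability bound \eqref{eqn:var_bound} first so that it can be reused in the bias analysis.

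For \eqref{eqn:var_bound} I would argue pointwise from the explicit estimator \eqref{eqn:psi_hat}. Two a priori bounds on its ingredients do most of the work: $p_{k-1}\le\overline p$ since $p_{k-1}\in\mathcal P$, and $\widehat{\nabla A_\infty}(p_{k-1})\in[0,G_{\max}]$ because every summand $\nabla_p F^{-1}_p(\zeta_i;\overline W_{i-1})$ lies in $[0,G_{\max}]$ by Assumption~\ref{assumption:gradient_bounds}. Writing $\widehat{A_\infty}(p_{k-1})^{-1}=N_k/T_k$ and using $(x-y)^2\le 2x^2+2y^2$ gives
\begin{align*}
\widehat{\nabla\Psi}(p_{k-1})^2\le 2\Big(\tfrac{N_k}{T_k}\Big)^2+2\,\overline p^{\,2}G_{\max}^2\Big(\tfrac{N_k}{T_k}\Big)^4.
\end{align*}
Since $T_k\ge_{\mathrm{a.s.}}T^*_k$, I may replace $T_k$ by $T^*_k$ in the denominators, reducing the claim to $\exptn[N_k^m]=\mathcal O((T^*_k)^m)$ for $m\in\{2,4\}$, uniformly in $k$. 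This holds because $N_k$ is the first-passage count $\min\{n:\sum_{i\le n}A_i\ge T^*_k\}$, so $N_k\le 1+M_k$ with $M_k$ the number of effective arrivals in a fixed window of length $T^*_k$; as effective arrivals are a thinning of the Poisson($\Lambda$) stream, the strong Markov property at the epoch $\overline T_{k-1}$ shows $M_k$ is stochastically dominated by a Poisson$(\Lambda T^*_k)$ variable, whence $\exptn[N_k^m]=\mathcal O((T^*_k)^m)$. This yields a constant $C_1$.

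For \eqref{eqn:bias_bound} I would condition on $\mathcal F_{k-1}$ (which fixes $p_{k-1}$ but leaves the entry workload $\overline W_{\widetilde N_{k-1}}$ of iteration $k$ random) and compare the realised trajectory with a coupled stationary one: a second {\em M/G/1\,+}\,$H(p_{k-1},V)$ queue started from $\overline W_\infty(p_{k-1})$ and driven by the same seeds $\{\zeta_i\}$ and services $\{S_i\}$, with estimator $\widehat{\nabla\Psi}^\circ$. Then
\begin{align*}
\beta_k=\exptn\big[\widehat{\nabla\Psi}(p_{k-1})-\widehat{\nabla\Psi}^\circ\mid\mathcal F_{k-1}\big]+\Big(\exptn\big[\widehat{\nabla\Psi}^\circ\mid\mathcal F_{k-1}\big]-\nabla\Psi(p_{k-1})\Big).
\end{align*}
For the first (coupling) term, the map $(a,d)\mapsto a^{-1}-p_{k-1}d\,a^{-2}$ is Lipschitz on $\{a\ge c>0,\ d\in[0,G_{\max}]\}$ (the lower bound on $\widehat{A_\infty}$ coming from the variability estimate), so this term is controlled by $\exptn|\widehat{A_\infty}-\widehat{A_\infty}^\circ|$, $\exptn|\widehat{\nabla A_\infty}-\widehat{\nabla A_\infty}^\circ|$ and the count discrepancy $|N_k-N_k^\circ|$. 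Proposition~\ref{propn:model:theoretical_results}(b),(c) bounds each of these by the initial workload gap $\exptn|\overline W_{\widetilde N_{k-1}}-\overline W_\infty(p_{k-1})|$, while the $N_k^{-1}$ prefactors inherent in the sample averages---controlled through Proposition~\ref{propn:model:theoretical_results}(d)---supply the inverse-window decay. The factor $\eta_{k-1}$ enters through the initial gap: $|p_{k-1}-p_{k-2}|\le\eta_{k-1}|\widehat{\nabla\Psi}(p_{k-2})|$ with $\exptn|\widehat{\nabla\Psi}(p_{k-2})|\le\sqrt{\nu_{k-1}}\le\sqrt{C_1}$, and the pathwise price-Lipschitz property $\nabla_p\overline W_n\in[-G_{\max},0]$ (Assumption~\ref{assumption:gradient_bounds}, Corollary~\ref{corr:bounds_on_gradients}) gives $|\overline W_\infty(p_{k-2})-\overline W_\infty(p_{k-1})|\le G_{\max}|p_{k-2}-p_{k-1}|$ under a shared-seed coupling; the residual gap between $\overline W_{\widetilde N_{k-1}}$ and $\overline W_\infty(p_{k-2})$ is geometrically small in $N_{k-1}$ by Proposition~\ref{propn:model:theoretical_results}(a) and is negligible. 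Using $T^*_k\ge T^*_{k-1}$ to index by $k-1$ then gives $|\beta_k|\le C_2\,\eta_{k-1}/(T^*_{k-1})^2$.

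The step I expect to be the main obstacle is the second (stationary estimation) term, i.e.\ showing it does not dominate. Since $\widehat{A_\infty}=T_k/N_k$ is a ratio of dependent sample sums over a randomly terminated (first-passage) window, the crude delta-method estimate gives only an $\mathcal O(1/T^*_k)$ bias carrying no $\eta_{k-1}$ factor, which would swamp the target rate; proving that this contribution is genuinely of smaller order---exploiting the regenerative structure behind Proposition~\ref{propn:estimator_is_consistent} and the fact that the window terminates exactly at an effective arrival---is the delicate point. A secondary nuisance is that the transient and stationary coupled runs need not record the same number of arrivals over a common horizon, so the pathwise comparison must first absorb the count mismatch $|N_k-N_k^\circ|$ via Proposition~\ref{propn:model:theoretical_results}(c) before the Lipschitz reduction applies.
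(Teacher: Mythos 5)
Your proof of the variance bound \eqref{eqn:var_bound} is correct and takes a different, in fact tighter, route than the paper: the paper argues that $\widehat{A_{\infty}}(p_{k-1})=T_k/N_k$ converges almost surely to $\exptn[A_{\infty}(p_{k-1})]$ and invokes the continuous mapping theorem to conclude $\nu_k=\mathcal{O}(1)$, a step that implicitly needs exactly the uniform integrability your argument supplies; your domination $N_k\leq 1+M_k$ with $M_k$ stochastically dominated by a Poisson$(\Lambda T^*_k)$ variable, followed by Poisson moment bounds, makes the uniform constant $C_1$ explicit.

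The bias bound \eqref{eqn:bias_bound} is where the proposal has a genuine gap, and it is the one you flag yourself. Your decomposition compares the realized estimator with a coupled stationary estimator $\widehat{\nabla\Psi}^{\circ}$, so the term $\exptn[\widehat{\nabla\Psi}^{\circ}\mid\mathcal{F}_{k-1}]-\nabla\Psi(p_{k-1})$ --- the finite-window bias of a ratio estimator built from a stationary trajectory --- enters your bound at full strength, and you give no argument that it is $\mathcal{O}\big(\eta_{k-1}(T^*_{k-1})^{-2}\big)$; as you observe, delta-method heuristics suggest only $\mathcal{O}(1/T^*_k)$ with no factor $\eta_{k-1}$. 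As written, the proposal therefore does not prove the proposition. The paper is structured precisely so that this object never appears: Lemma \ref{lemma:omega3} compares conditional expectations, $\exptn[\widehat{A_{\infty}}(p_{k-1})\mid\mathcal{F}_{k-1}]-\exptn[A_{\infty}(p_{k-1})]$, centering term by term via $\widehat{A_{\infty}}-\exptn[A_{\infty}]=\tfrac{1}{N_k}\sum_j\big(A_j-A_{\infty}(p_{k-1})\big)$ against the exact stationary mean, so that after the coupling of Proposition \ref{propn:model:theoretical_results}(a)--(b) only the initial-workload gap survives, handled through the recursion $\delta_{k-1}\leq\gamma_1^{N_{k-1}}\delta_{k-2}+\rho_{k-2}$; Lemmas \ref{L1}--\ref{L2} then transfer this to $\omega_k^{\rm (I)}$ and $\omega_k^{\rm (II)}$ by Taylor expansion on $\{\widehat{A_{\infty}}\geq\epsilon\}$ plus dyadic slicing and Poisson--Chernoff estimates on the complement. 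That last device also replaces your parenthetical appeal to ``the lower bound on $\widehat{A_{\infty}}$ coming from the variability estimate'': no almost-sure lower bound exists, and the paper spends the $\omega_k^{\rm (I.b)},\omega_k^{\rm (II.b)}$ analysis dealing with exactly this.

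Two further remarks. First, even your coupling half falls one power of $T^*$ short of the target: you bound the previous window's gradient estimator by $\sqrt{\nu_{k-1}}\leq\sqrt{C_1}=\mathcal{O}(1)$, leaving only a single $1/N_k$ prefactor of decay, which yields $\mathcal{O}(\eta_{k-1}/T^*_{k-1})$; the paper's stated rate instead rests on the step marked $(*)$ in Lemma \ref{lemma:omega3}, which dominates $|\widehat{\nabla\Psi}(p_{k-2})|$ by $\mathcal{O}_{\prob}\big(1/((N_{k-1}-2)(N_{k-1}-3))\big)$ via a Gamma-variable computation and then applies Proposition \ref{propn:model:theoretical_results}(d) with a degree-two polynomial. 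Second, in fairness, your obstacle points at the most delicate spot of the published argument: Lemma \ref{L1} invokes Lemma \ref{lemma:omega3} to control the $L^1$ deviation $\exptn\big[|\widehat{A_{\infty}}-\exptn[A_{\infty}]|\mid\mathcal{F}_{k-1}\big]$, although that lemma as stated controls only the conditional bias, and the difference between the two is exactly the stationary fluctuation you worry about. So your concern is substantive, but the proposal as submitted leaves the statement unproven at the point where the paper supplies a complete chain of steps.
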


\begin{corlly}
    If the window sizes $\left\{ T^*_k \right\}_{k \geq 1} \in \mathbb{R}_+$ are chosen so that $\sum_{k=1}^\infty {\eta_{k-1}}/({{T^*_{k-1}})^2}<\infty$, then $p_k\asarrow p^*$ as $k\to\infty$.
\end{corlly}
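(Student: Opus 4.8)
The plan is to run the standard almost-supermartingale (Robbins--Siegmund) machinery on the squared distance to the optimum, using Assumption~\ref{assumption:Psi} to produce a stabilizing negative drift and Proposition~\ref{propn:bias_var} to show that the bias and variance perturbations are summable. Write $D_k := (p_k - p^*)^2$ and recall that $p^* = \argmax_{p\in\mathcal P}\Psi(p)\in\mathcal P$, so $\pi_{\mathcal P}(p^*)=p^*$. Since the projection onto the convex set $\mathcal P$ is non-expansive, the recursion \eqref{recur} gives
\begin{align*}
D_k \le \big(p_{k-1}-p^* + \eta_k\,\widehat{\nabla\Psi}(p_{k-1})\big)^2 = D_{k-1} + 2\eta_k (p_{k-1}-p^*)\,\widehat{\nabla\Psi}(p_{k-1}) + \eta_k^2\,\widehat{\nabla\Psi}(p_{k-1})^2 .
\end{align*}
Conditioning on $\mathcal F_{k-1}$ (with respect to which $p_{k-1}$ is measurable) and substituting $\exptn[\widehat{\nabla\Psi}(p_{k-1})\mid\mathcal F_{k-1}] = \nabla\Psi(p_{k-1}) + \beta_k$ from \eqref{eqn:bias:beta} then yields a one-step drift inequality.

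First I would extract the negative drift. Writing $x:=p^*-p_{k-1}$, Assumption~\ref{assumption:Psi} states $K_0 x \le \nabla\Psi(p_{k-1}) \le K_1 x$; a short case analysis on the sign of $x$ (using the lower bound when $x>0$ and the upper bound when $x<0$, since $K_1>K_0$) shows $(p^*-p_{k-1})\nabla\Psi(p_{k-1}) \ge K_0 D_{k-1}$ in all cases, so the true-gradient cross term contributes $-2K_0\eta_k D_{k-1}$. Next I would absorb the bias: since $p_{k-1},p^*\in\mathcal P$ we have $|p_{k-1}-p^*|\le \overline p-\underline p =: D_{\mathcal P}$, and the bias bound \eqref{eqn:bias_bound} gives $2\eta_k|p_{k-1}-p^*|\,|\beta_k| \le 2 D_{\mathcal P} C_2\, \eta_k\eta_{k-1}/(T^*_{k-1})^2$. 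Finally, using the (conditional form of the) variability bound \eqref{eqn:var_bound}, the quadratic term is at most $C_1\eta_k^2$. Altogether,
\begin{align*}
\exptn[D_k \mid \mathcal F_{k-1}] \le D_{k-1} - 2K_0\eta_k D_{k-1} + c_k, \qquad c_k := 2 D_{\mathcal P} C_2\,\frac{\eta_k\eta_{k-1}}{(T^*_{k-1})^2} + C_1\eta_k^2 .
\end{align*}

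To close the argument I would invoke the Robbins--Siegmund almost-supermartingale convergence theorem. Under the standing (Robbins--Monro) step-size conditions $\sum_k\eta_k=\infty$ and $\sum_k\eta_k^2<\infty$, together with the hypothesis $\sum_k \eta_{k-1}/(T^*_{k-1})^2<\infty$ (and $\eta_k$ bounded, so the first sum in $c_k$ is dominated by the window-size sum), the perturbation is summable: $\sum_k c_k<\infty$. The theorem then yields that $D_k$ converges almost surely to some limit $D_\infty\ge 0$ and that $\sum_k \eta_k D_{k-1}<\infty$ almost surely. Since $\sum_k\eta_k=\infty$ while $\sum_k\eta_k D_{k-1}<\infty$, the convergent sequence $D_{k-1}$ cannot have a strictly positive limit; hence $D_\infty=0$, i.e.\ $p_k\asarrow p^*$.

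The main obstacle I anticipate is the treatment of the bias rather than the variance. Unlike the classical unbiased SGD setting, here $\beta_k$ does not vanish and has indefinite sign, so it cannot be folded into a martingale-difference term; the argument instead hinges on controlling it in absolute value and on the quantitative decay in \eqref{eqn:bias_bound}, which is exactly what makes the summability hypothesis $\sum_k\eta_{k-1}/(T^*_{k-1})^2<\infty$ the correct condition. A secondary technical point is that Robbins--Siegmund requires the drift inequality to hold conditionally on $\mathcal F_{k-1}$, so one must ensure the variability bound \eqref{eqn:var_bound} is available in its conditional form $\exptn[\widehat{\nabla\Psi}(p_{k-1})^2\mid\mathcal F_{k-1}]\le C_1$; this is legitimate because the bound established in Proposition~\ref{propn:bias_var} is uniform over the price (and the initial workload entering iteration $k$), hence unaffected by conditioning on the $\mathcal F_{k-1}$-measurable past.
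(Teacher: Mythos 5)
Your proof is correct, and it supplies an argument the paper itself never writes down: the corollary is stated immediately after Proposition~\ref{propn:bias_var} with no proof, implicitly treated as a consequence of the bias/variance bounds plus standard stochastic-approximation theory. Your Robbins--Siegmund route is exactly the right way to make that implication rigorous, and it is worth noting how it relates to what the paper \emph{does} prove: the closest explicit analogue is the recursion in Appendix~\ref{appendix:proof:th:price_convergence_rate} (proof of Theorem~\ref{thm:price_convergence_rate}), where the same quadratic expansion of $(p_k-p^*)^2$ under the projection is carried out, but in \emph{unconditional} expectation, yielding the $L^2$ rate $\exptn[(p_{k-1}-p^*)^2]\le C_4 k^{-\alpha}$ under the stronger Assumption~\ref{assk}. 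That $L^2$ recursion alone would not give almost-sure convergence, which is what the corollary asserts; your conditional drift inequality plus the almost-supermartingale theorem does, and under the weaker summability hypothesis of the corollary rather than the specific rate conditions of Assumption~\ref{assk}. Two caveats you flag are genuinely necessary and handled correctly: (i) the Robbins--Monro conditions $\sum_k\eta_k=\infty$, $\sum_k\eta_k^2<\infty$ are not stated in the corollary and must be read into the paper's phrase ``suitable step-size sequence'' (without $\sum_k\eta_k=\infty$ the conclusion is simply false, since the iterates could stall before reaching $p^*$); and (ii) the paper defines $\nu_k$ unconditionally, whereas Robbins--Siegmund needs the conditional second-moment bound — your justification that the Section~\ref{subsection:variability analysis} bound is uniform in the price and the workload entering iteration $k$, hence survives conditioning on $\mathcal F_{k-1}$, is the right repair. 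Your sign analysis of the drift term under Assumption~\ref{assumption:Psi} (using the lower bound for $p_{k-1}<p^*$ and the upper bound for $p_{k-1}>p^*$) is also correct and is a detail the paper glosses over.
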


\subsection{Variability analysis}\label{subsection:variability analysis}
The goal of this subsection is to prove the uniform upper bound \eqref{eqn:var_bound}.
From Equation \eqref{eqn:psi_hat}, in combination with Corollary \ref{corr:bounds_on_gradients}, it follows that
\begin{align*}
    \widehat{\nabla\Psi}(p_{k-1})^2 \leq \frac{1}{\widehat{A_{\infty}}(p_{k-1})^2} + \overline{p}^2\frac{\widehat{\nabla A_{\infty}}(p_{k-1})^2}{\widehat{A_{\infty}}(p_{k-1})^4} \leq \frac{1}{\widehat{A_{\infty}}(p_{k-1})^2} + \overline{p}^2 G_{\max}^2 \frac{1}{\widehat{A_{\infty}}(p_{k-1})^4}.
\end{align*}
Therefore,
\begin{align}\label{eqn:variability:temp_1}
    \exptn\left[\widehat{\nabla \Psi}(p_{k-1})^2\right] &= \exptn\left[\exptn\left[\widehat{\nabla \Psi}(p_{k-1})^2 \ \big\vert N_k\right] \right] \notag
    \\
    &\leq \exptn\left[\exptn\left[\frac{1}{\widehat{A_{\infty}}(p_{k-1})^2} \ \Big\vert N_k\right]\right] + \overline{p}^2 G_{\max}^2 \exptn\left[\exptn\left[\frac{1}{\widehat{A_{\infty}}(p_{k-1})^4} \ \Big\vert N_k\right] \right].
\end{align}
By definition,
\begin{align*}
    \widehat{A_{\infty}}(p_{k-1}) = \frac{T_k}{N_k}\asarrow \exptn\left[A_{\infty}(p_{k-1})\right].  
\end{align*}
{It is clear that $A_{\infty}(p_{k-1}) \geq_{\text{st}} \text{Exp}\big(\Lambda H\big(\underline{p}, 0\big)\big) \geq_{\text{st}} \text{Exp}\big(\Lambda\big)$, which implies that $\exptn\big[A_{\infty}(p_{k-1})\big] > 0$.}
Hence, by the continuous mapping theorem, for any $m\in{\mathbb N}$,
\begin{align*}
    &\frac{1}{\widehat{A_{\infty}}(p_{k-1})^m}\asarrow \frac{1}{\exptn\left[A_{\infty}(p_{k-1})\right]^m} \leq \frac{1}{\exptn\big[A_{\infty}(\underline{p})\big]^m}.
\end{align*}
It follows  that $\nu_k = \mathcal{O}(1)$ as $k\to\infty$, thus verifying  \eqref{eqn:var_bound}.

\subsection{Bias analysis}\label{subsection:bias analysis}
In the analysis of the bias, the starting point is
\begin{align}\label{eqn:bias:decomposition}
    \big\vert \beta_k \big\vert &= \Big\vert \exptn\left[\widehat{\nabla\Psi}(p_{k-1}) \vert \ \mathcal{F}_{k-1}\right] - \nabla\Psi(p_{k-1}) \Big\vert \notag
    \\
    &= \Bigg\vert \exptn\left[\frac{1}{\widehat{A_{\infty}}(p_{k-1})} - p_{k-1}\frac{\widehat{\nabla A_{\infty}}(p_{k-1})}{\widehat{A_{\infty}}(p_{k-1})^2} \ \Big\vert \mathcal{F}_{k-1} \right] - \left(\frac{1}{\exptn\big[A_{\infty}(p_{k-1})\big]} - p_{k-1}\frac{\nabla \exptn\big[A_{\infty}(p_{k-1})\big]}{\exptn\big[A_{\infty}(p_{k-1})\big]^2} \right)\Bigg\vert \notag
    \\
    &\leq \omega^{\rm (I)}_k +\overline{p}\,\omega^{\rm (II)}_k,
\end{align}
where
\begin{align*}
    \omega_k^{\rm (I)}&:={\Bigg\vert \exptn\left[\frac{1}{\widehat{A_{\infty}}(p_{k-1})} \ \Big\vert \mathcal{F}_{k-1} \right] - \frac{1}{\exptn\big[A_{\infty}(p_{k-1})\big]} \Bigg\vert},\\ \omega_k^{\rm (II)}&:=  {\Bigg\vert \exptn\left[\frac{\widehat{\nabla A_{\infty}}(p_{k-1})}{\widehat{A_{\infty}}(p_{k-1})^2} \ \Big\vert \mathcal{F}_{k-1} \right] - \frac{\nabla \exptn\big[A_{\infty}(p_{k-1})\big]}{\exptn\big[A_{\infty}(p_{k-1})\big]^2}\Bigg\vert}.
\end{align*}
We aim to determine an upper bound on the rate at which $\beta_k$ vanishes as $k \to \infty$, which we do by analyzing the decay behavior of both $\omega_k^{\text{(I)}}$ and $\omega_k^{\text{(II)}}$. To this end we rely on the following two lemmas, the proofs of which are given in Appendix~\ref{appendix:proof:bias}.

\begin{lemma} As $k\to\infty$, \label{L1}
  \begin{align*}
    \omega_k^{\rm (I)} =  \mathcal{O}\left(\eta_{k-1} \frac{1}{{T^*_{k-1}}^2}\right).
\end{align*}  
\end{lemma}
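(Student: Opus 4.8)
The plan is to transport the first‑moment estimate of Lemma~\ref{lemma:omega3} through the map $x\mapsto 1/x$. Writing $\mu_k:=\exptn[A_{\infty}(p_{k-1})]$ and using the identity $\tfrac1a-\tfrac1b=\tfrac{b-a}{ab}$,
\[
\omega_k^{\rm (I)}=\left|\exptn\!\left[\frac{\mu_k-\widehat{A_{\infty}}(p_{k-1})}{\mu_k\,\widehat{A_{\infty}}(p_{k-1})}\,\Big|\,\mathcal{F}_{k-1}\right]\right|\le \frac{1}{\mu_k}\,\exptn\!\left[\frac{\bigl|\mu_k-\widehat{A_{\infty}}(p_{k-1})\bigr|}{\widehat{A_{\infty}}(p_{k-1})}\,\Big|\,\mathcal{F}_{k-1}\right].
\]
Two uniform bounds tame the prefactors. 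First, $\mu_k\ge 1/\Lambda$ for every $p_{k-1}\in\mathcal P$, because the effective arrival stream is a thinning of the rate‑$\Lambda$ Poisson process, so each effective interarrival time stochastically dominates an $\mathrm{Exp}(\Lambda)$ variable. Second, the Gamma representation of the window length already exploited in Section~\ref{subsection:variability analysis} yields $\exptn[\widehat{A_{\infty}}(p_{k-1})^{-m}\mid\mathcal{F}_{k-1}]=\mathcal{O}(1)$ for every fixed $m$, uniformly in $k$.

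Next I would centre the numerator at its conditional mean,
\[
\mu_k-\widehat{A_{\infty}}(p_{k-1})=\underbrace{\bigl(\mu_k-\exptn[\widehat{A_{\infty}}(p_{k-1})\mid\mathcal F_{k-1}]\bigr)}_{=:b_k,\ |b_k|=\omega_k^{\rm (III)}}+\bigl(\exptn[\widehat{A_{\infty}}(p_{k-1})\mid\mathcal F_{k-1}]-\widehat{A_{\infty}}(p_{k-1})\bigr).
\]
The term $b_k$ is deterministic given $\mathcal F_{k-1}$, so it factors out of the expectation and, using $\mu_k\ge 1/\Lambda$ together with the $\mathcal{O}(1)$ inverse‑moment bound, contributes
\[
\frac{|b_k|}{\mu_k}\,\exptn\!\left[\widehat{A_{\infty}}(p_{k-1})^{-1}\mid\mathcal F_{k-1}\right]=\mathcal{O}\bigl(\omega_k^{\rm (III)}\bigr)=\mathcal{O}\!\left(\eta_{k-1}/(T^*_{k-1})^2\right)
\]
by Lemma~\ref{lemma:omega3}. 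This is precisely the mechanism by which the interarrival‑time bias is transported to the reciprocal, and it already carries the advertised rate.

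The crux is the centred remainder
\[
R_k:=\frac{1}{\mu_k}\,\exptn\!\left[\frac{\exptn[\widehat{A_{\infty}}(p_{k-1})\mid\mathcal F_{k-1}]-\widehat{A_{\infty}}(p_{k-1})}{\widehat{A_{\infty}}(p_{k-1})}\,\Big|\,\mathcal F_{k-1}\right],
\]
which is the Jensen/convexity gap of the reciprocal of the sample mean $\widehat{A_{\infty}}(p_{k-1})=T_k/N_k$ and is the \emph{main obstacle}, since a naive second‑order expansion leaves only a conditional‑variance contribution of the sample mean. To control it I would condition on $N_k$ and invoke the coupling behind Proposition~\ref{propn:model:theoretical_results}(c): given $N_k=n$, the window length $T_k$ is a sum of $n$ effective interarrival times, each stochastically dominating an $\mathrm{Exp}(\Lambda H(\underline p,0))$ variable, which furnishes a Gamma‑type stochastic lower bound for $T_k$ and thus sharp inverse moments $\exptn[(N_k/T_k)^r\mid N_k]$. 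Rewriting $|R_k|=\mu_k^{-1}\,\bigl|1-\exptn[\widehat{A_{\infty}}(p_{k-1})\mid\mathcal F_{k-1}]\,\exptn[\widehat{A_{\infty}}(p_{k-1})^{-1}\mid\mathcal F_{k-1}]\bigr|$ and applying Cauchy--Schwarz together with Proposition~\ref{propn:model:theoretical_results}(d) to the inverse polynomial moments of $N_1(0,T]$ reduces $R_k$ to negative powers of $T^*$.

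Establishing that this Jensen contribution does not exceed the order $\eta_{k-1}/(T^*_{k-1})^2$ of the bias‑transport term is the delicate step of the argument; it is here that the sharp inverse‑moment control from the Gamma representation (rather than a generic $1/N_k$ central‑limit‑type estimate) is essential, and where the operator's choice of the increasing window schedule $\{T^*_k\}_{k\ge 1}$ must be brought to bear. Combining the two pieces then gives $\omega_k^{\rm (I)}\le \mathcal{O}(\omega_k^{\rm (III)})+|R_k|=\mathcal{O}\!\left(\eta_{k-1}/(T^*_{k-1})^2\right)$, as claimed.
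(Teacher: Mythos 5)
Your treatment of the bias-transport term is fine: $b_k$ is $\mathcal{F}_{k-1}$-measurable, so it factors out, and the uniform inverse-moment bound turns that contribution into $\mathcal{O}(\omega_k^{\rm (III)})$, which Lemma~\ref{lemma:omega3} covers. The proposal breaks down, however, exactly at the step you leave open, and not because that step is merely delicate: the bound you would need for $R_k$ \emph{in isolation} is false. Writing $\widehat{A_{\infty}}(p_{k-1})=\bar m(1+\delta)$ with $\bar m:=\exptn[\widehat{A_{\infty}}(p_{k-1})\mid\mathcal{F}_{k-1}]$ and $\exptn[\delta\mid\mathcal{F}_{k-1}]=0$, one has the exact identity
\begin{align*}
\mu_k R_k \;=\; \exptn\left[\frac{\delta^2}{1+\delta}\,\Big\vert\,\mathcal{F}_{k-1}\right]\;\ge\;0,
\end{align*}
so $R_k$ is, up to constants, the conditional \emph{variance} of the sample mean $\widehat{A_{\infty}}(p_{k-1})=T_k/N_k$, which is of exact order $1/T^*_k$ by the CLT scaling of the effective-arrival counts over a window of length at least $T^*_k$. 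Two consequences. First, $R_k$ carries no factor $\eta_{k-1}$: it is pure sampling noise, present even when the price is never updated and the system starts in stationarity; in the no-balking benchmark ($H\equiv 1$, so that $N_k=P+1$ and $T_k=T^*_k+E$ with $P\sim\mathrm{Poisson}(\Lambda T^*_k)$ independent of $E\sim\mathrm{Exp}(\Lambda)$), a direct computation gives $R_k=(T^*_k)^{-1}(1+o(1))$. Second, since $1/T^*_k\gg\eta_{k-1}/(T^*_{k-1})^2$ for every admissible step-size/window schedule, no combination of Cauchy--Schwarz, Gamma-type inverse moments, or Proposition~\ref{propn:model:theoretical_results}(d) can push $R_k$ below the target rate: those tools produce $\mathcal{O}(1)$ prefactors and powers of $1/T^*_k$, but never the factor $\eta_{k-1}$, which in this model can only originate from the price increment $|p_{k-1}-p_{k-2}|\le\eta_{k-1}|\widehat{\nabla\Psi}(p_{k-2})|$. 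In your exact decomposition $\exptn[\widehat{A_{\infty}}(p_{k-1})^{-1}\mid\mathcal{F}_{k-1}]-\mu_k^{-1}=\mu_k^{-1}b_k\,\exptn[\widehat{A_{\infty}}(p_{k-1})^{-1}\mid\mathcal{F}_{k-1}]+R_k$, the advertised rate can only emerge from cancellation between the two right-hand terms (in the stationary benchmark they are each of order $1/T^*_k$ with opposite signs, while their sum is $\mathcal{O}(1/(T^*_k)^2)$); bounding them separately, as you propose, destroys precisely that cancellation.

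This is also why the paper's proof has a different shape: it never isolates a Jensen/variance term. On the event $\{\widehat{A_{\infty}}(p_{k-1})\ge\epsilon\}$ it applies the mean-value theorem \emph{pathwise}, $|1/\widehat{A_{\infty}}(p_{k-1})-1/\mu_k|=\xi^{-2}|\widehat{A_{\infty}}(p_{k-1})-\mu_k|\le\epsilon^{-2}|\widehat{A_{\infty}}(p_{k-1})-\mu_k|$, so the entire error stays inside a single conditional expectation of $|\widehat{A_{\infty}}(p_{k-1})-\mu_k|$, which is then controlled through the coupling construction behind Lemma~\ref{lemma:omega3} (two coupled queues started from $W(\overline{T}_{k-1})$ and from stationarity; the factor $\eta_{k-1}$ enters there through the term $\rho_{k-2}$); the complementary event $\{\widehat{A_{\infty}}(p_{k-1})<\epsilon\}$ is disposed of by dyadic slicing plus a Poisson Chernoff bound and contributes only an exponentially small amount. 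Your uniform inverse-moment bound spares you that truncation, but at the cost of being left with the wrong object: to salvage your route you would have to keep $\widehat{A_{\infty}}(p_{k-1})-\mu_k$ intact under the expectation, as the paper does, rather than centering at the conditional mean and invoking Jensen.
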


\begin{lemma} As $k\to\infty$, \label{L2}
  \begin{align*}
    \omega_k^{\rm (II)} =  \mathcal{O}\left(\eta_{k-1} \frac{1}{{T^*_{k-1}}^2}\right).
\end{align*}  
\end{lemma}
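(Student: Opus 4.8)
The plan is to mirror the proof of Lemma~\ref{L1}, reducing $\omega_k^{\rm (II)}$ to two pieces: one governed by the bias of the denominator estimator $\widehat{A_{\infty}}$, which can be recycled essentially verbatim from Lemma~\ref{L1}, and one governed by the bias of the IPA gradient estimator $\widehat{\nabla A_{\infty}}$, which is the genuinely new object. Writing $a := \widehat{A_{\infty}}(p_{k-1})$, $\bar a := \exptn[A_{\infty}(p_{k-1})]$, $b := \widehat{\nabla A_{\infty}}(p_{k-1})$ and $\bar b := \nabla\exptn[A_{\infty}(p_{k-1})] = \exptn[\nabla_p A_{\infty}(p_{k-1})]$ (the last equality by Proposition~\ref{propn:der[exptn A] = exptn[der A]}), I would start from the algebraic identity
\[
\frac{b}{a^2} - \frac{\bar b}{\bar a^2} = \frac{b - \bar b}{a^2} + \bar b\left(\frac{1}{a^2} - \frac{1}{\bar a^2}\right),
\]
take the conditional expectation given $\mathcal{F}_{k-1}$, apply Jensen and the triangle inequality, and use $|\bar b| \le G_{\max}$ (Corollary~\ref{corr:bounds_on_gradients}) to obtain
\[
\omega_k^{\rm (II)} \le \exptn\!\left[\frac{|b - \bar b|}{a^2}\,\Big|\,\mathcal{F}_{k-1}\right] + G_{\max}\,\exptn\!\left[\left|\frac{1}{a^2} - \frac{1}{\bar a^2}\right|\,\Big|\,\mathcal{F}_{k-1}\right].
\]

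For the second term I would repeat the argument of Lemma~\ref{L1} almost verbatim, with $1/x$ replaced by $1/x^2$. Splitting on ${\boldsymbol 1}_k = \mathbbm{1}\{a \ge \epsilon\}$ and Taylor-expanding $1/x^2$ (whose derivative is $-2/x^3$), on the event $\{a \ge \epsilon\}$ one bounds $1/\xi^3 \le 1/\epsilon^3$ and is left with $(2/\epsilon^3)\,\exptn[|a - \bar a|\,{\boldsymbol 1}_k \mid \mathcal{F}_{k-1}]$, which is $\mathcal{O}(\eta_{k-1}/{T^*_{k-1}}^2)$ by Lemma~\ref{lemma:omega3}; on the complementary event the slicing-plus-Chernoff estimate of Lemma~\ref{L1} again yields an $\mathcal{O}(e^{-\Lambda^\circ T^*_k})$ contribution, which is negligible.

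The crux is the first term. Splitting once more on ${\boldsymbol 1}_k$, on $\{a \ge \epsilon\}$ the factor $1/a^2$ is bounded by $1/\epsilon^2$, reducing matters to $\exptn[|b - \bar b| \mid \mathcal{F}_{k-1}]$; on $\{a < \epsilon\}$ I would use $b, \bar b \in [0, G_{\max}]$ (Assumption~\ref{assumption:gradient_bounds} and Corollary~\ref{corr:bounds_on_gradients}), so that $|b-\bar b| \le 2G_{\max}$, and recycle the slicing-plus-Chernoff bound to show this part is exponentially small. What remains is to prove a gradient analogue of Lemma~\ref{lemma:omega3}, namely that $\exptn[|b - \bar b| \mid \mathcal{F}_{k-1}] = \mathcal{O}(\eta_{k-1}/{T^*_{k-1}}^2)$. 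Here I would again couple the M/G/1\,+\,$H(p_{k-1},V)$ queue driven by the algorithm (initial workload $W(\overline{T}_{k-1})$, gradient reset to $0$ at the iteration start) with a stationary copy at price $p_{k-1}$, as in the proof of Lemma~\ref{lemma:omega3}, and compare the two IPA recursions \eqref{eqn:estimator_nabla_exptn_A} term by term.

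The main obstacle, and the point where the argument departs from Lemma~\ref{lemma:omega3}, is that the summands $\nabla_p F^{-1}_p(\zeta_i;\overline{W}_{i-1})$ depend not only on the current workload but on the accumulated workload-gradient $\nabla_p \overline{W}_{i-1}$, which carries the entire history since the last regeneration. The key observation that makes the coupling go through is the reset rule in the recursion for $\nabla_p\overline{W}_i$: the workload-gradient is set to $0$ at every epoch where $\underline{W}_i = 0$, i.e.\ at every busy-period start. Consequently, once the two coupled queues first share a busy-period start --- which, by Proposition~\ref{propn:model:theoretical_results}(c), occurs within a lag controlled by $\exptn|\overline{W}^1_0 - \overline{W}^2_0|$ --- both workload-gradients have reset to $0$ and the two IPA recursions coincide pathwise thereafter. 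The gradient discrepancy is therefore confined to an initial transient (a finite number of arrivals in the first, partial busy period, where the artificial iteration-start reset is felt), and, exactly as in Lemma~\ref{lemma:omega3}, the geometric factors $\gamma_1^{N_{k-1}}$ annihilate this transient while the dominant contribution reduces to the inter-iteration price increment $\rho_{k-2} = \mathcal{O}(\eta_{k-1}/{T^*_{k-1}}^2)$ of \eqref{eqn:rho:stochastic upper bound}. Combining the three estimates yields the claimed rate.
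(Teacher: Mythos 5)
Your reduction of $\omega_k^{\rm (II)}$ into a denominator piece and a gradient piece, your treatment of the denominator piece, and your handling of all the $\{a<\epsilon\}$ tail events (truncation at $\epsilon$, dyadic slicing, Chernoff) match the paper's proof in substance; the paper uses a bivariate mean-value expansion of $f(x,y)=x/y^2$ where you use an exact algebraic identity, which is immaterial. You have also correctly located the crux: the paper's own chain for $\omega_k^{\rm (II.a)}$ essentially asserts that $\exptn\big[\,\vert\widehat{\nabla A_{\infty}}(p_{k-1})-\nabla\exptn[A_{\infty}(p_{k-1})]\vert\mid\mathcal{F}_{k-1}\big]$ decays at the claimed rate, leaning on the machinery of Lemma~\ref{lemma:omega3} without proving a gradient analogue. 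However, your proposed way of filling this step has a genuine gap, in two respects. First, the statement you set out to prove, $\exptn[\,\vert b-\bar b\vert\mid\mathcal{F}_{k-1}]=\mathcal{O}(\eta_{k-1}/{T^*_{k-1}}^2)$ with the absolute value \emph{inside} the conditional expectation, cannot hold: $b$ is an average of order $T^*_k$ terms, so its $L^1$-deviation from the deterministic target $\bar b$ is at least of CLT order ${T^*_k}^{-1/2}$, which dwarfs $\eta_{k-1}/{T^*_{k-1}}^2$. Lemma~\ref{lemma:omega3} is a statement about the \emph{bias} (absolute value outside), and only a bias-type statement has a chance; the coupling lets you move the absolute value inside only for the pathwise difference between the two \emph{coupled} processes, never for the difference from $\bar b$ itself.

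Second, and more fundamentally, your coupling mechanism does not deliver the rate even for the bias. You argue that the two IPA recursions coincide once the coupled queues share a busy-period start, and that this transient is ``controlled by $\exptn\vert\overline W^1_0-\overline W^2_0\vert$'' via Proposition~\ref{propn:model:theoretical_results}. But what Proposition~\ref{propn:model:theoretical_results}(b)--(c) controls by $\exptn\vert\overline W^1_0-\overline W^2_0\vert$ is the \emph{time-lag} $\Delta$ of the eventual coupling, not the \emph{index} $K$ (number of arrivals) at which the first simultaneous emptying occurs. If $\overline W^1_0>\overline W^2_0$, the first simultaneous emptying is the first emptying of the larger queue, which takes on the order of one residual busy period, i.e.\ $\mathcal{O}(1)$ arrivals, no matter how small $\overline W^1_0-\overline W^2_0$ is. Worse, the transient persists even if the two workloads agree exactly: the algorithm artificially resets $\nabla_p\overline W_{\widetilde N_{k-1}}:=0$ at the start of the iteration, typically in mid-busy-period, whereas the stationary gradient at such an epoch is nonzero; the two gradient recursions then disagree, by an amount of order $G_{\max}$ per term and with a fixed sign (so no cancellation), for all $\mathcal{O}(1)$ arrivals until the first emptying. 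Dividing by $N_k\sim T^*_k/\exptn[A_{\infty}(p_{k-1})]$, this yields a bias contribution of order $1/T^*_k$, not $\eta_{k-1}/{T^*_{k-1}}^2$. This is precisely where the workload argument of Lemma~\ref{lemma:omega3} fails to transfer to the gradient: the workload is continuous across the price change, so its discrepancy at iteration start is of order $\eta_{k-1}$, while the gradient is \emph{reset} at iteration start, so its discrepancy is of order one. As written, your sketch would prove at best an $\mathcal{O}(1/T^*_k)$ bound for the gradient term; the ``geometric annihilation'' step kills the residue of the previous iteration but cannot kill the fresh reset transient created at the start of the current one.
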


Using Lemmas \ref{L1}--\ref{L2}, in combination with Equation \eqref{eqn:bias:decomposition}, we arrive at the following result, which further implies \eqref{eqn:bias_bound}.

\begin{lemma}
    As $k\to\infty, $
\begin{align}\label{eqn:bias:final_rate}
    |\beta_k| = \mathcal{O}\left(\eta_{k-1} \frac{1}{{T^*_{k-1}}^2}\right).
\end{align}
\end{lemma}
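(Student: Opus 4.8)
The plan is to assemble the claimed rate directly from the bias decomposition \eqref{eqn:bias:decomposition} together with the two rate estimates already established in Lemmas \ref{L1} and \ref{L2}. Recall that \eqref{eqn:bias:decomposition}, obtained via Jensen's inequality and the triangle inequality, yields the pointwise (in $k$) bound
\[
|\beta_k| \leq \omega_k^{\rm (I)} + \overline{p}\,\omega_k^{\rm (II)},
\]
valid for every $k \geq 1$. Since the price set $\mathcal{P} = [\underline{p},\overline{p}]$ is bounded, the factor $\overline{p}$ is a fixed finite constant that does not depend on $k$, and so it may be absorbed into the order constant.

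First I would substitute the conclusions of Lemma \ref{L1} and Lemma \ref{L2}, namely $\omega_k^{\rm (I)} = \mathcal{O}(\eta_{k-1}/{T^*_{k-1}}^2)$ and $\omega_k^{\rm (II)} = \mathcal{O}(\eta_{k-1}/{T^*_{k-1}}^2)$ as $k \to \infty$. Because both terms share the same asymptotic order, their $\overline{p}$-weighted sum is again $\mathcal{O}(\eta_{k-1}/{T^*_{k-1}}^2)$: concretely, if $\omega_k^{\rm (I)} \leq C^{\rm (I)}\,\eta_{k-1}/{T^*_{k-1}}^2$ and $\omega_k^{\rm (II)} \leq C^{\rm (II)}\,\eta_{k-1}/{T^*_{k-1}}^2$ for all sufficiently large $k$, then $|\beta_k| \leq \big(C^{\rm (I)} + \overline{p}\,C^{\rm (II)}\big)\,\eta_{k-1}/{T^*_{k-1}}^2$, which is exactly \eqref{eqn:bias:final_rate} with the explicit constant $C^{\rm (I)} + \overline{p}\,C^{\rm (II)} < \infty$.

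There is essentially no analytical obstacle remaining at this stage: all of the difficulty — the coupling argument controlling $\omega_k^{\rm (III)}$ in Lemma \ref{lemma:omega3}, the slicing-plus-Chernoff estimates bounding the low-probability events $\{\widehat{A_\infty}(p_{k-1}) < \epsilon\}$, and the first-order Taylor expansions of $1/x$ and $x/y^2$ — was already carried out inside the proofs of Lemmas \ref{L1} and \ref{L2}. The one point worth flagging is to confirm that the $\mathcal{O}(\cdot)$ constants furnished by those two lemmas are uniform in $k$ (in particular, independent of the random history encoded in $\mathcal{F}_{k-1}$), so that their finite linear combination is itself a legitimate order constant; this uniformity holds because the bounds in Lemmas \ref{L1} and \ref{L2} depend only on $\epsilon$, $G_{\max}$, $\overline{p}$, $\Lambda^\circ$ and $\gamma_1$, none of which vary with $k$. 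Finally, \eqref{eqn:bias:final_rate} delivers the bias bound \eqref{eqn:bias_bound} of Proposition \ref{propn:bias_var} upon setting $C_2 := C^{\rm (I)} + \overline{p}\,C^{\rm (II)}$.
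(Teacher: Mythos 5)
Your proposal is correct and follows exactly the paper's own argument: the paper derives this lemma precisely by combining the decomposition $|\beta_k| \leq \omega_k^{\rm (I)} + \overline{p}\,\omega_k^{\rm (II)}$ from \eqref{eqn:bias:decomposition} with the rates in Lemmas \ref{L1} and \ref{L2}, absorbing the constant $\overline{p}$ into the order term. Your additional remark on the uniformity in $k$ of the implied constants is a sensible (and valid) clarification of a point the paper leaves implicit.
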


\section{Regret Analysis} \label{section:regret_analysis}

This section is the culmination of the results obtained in the previous sections, in that we present our two main theorems. The first theorem quantifies the convergence of the price iterates $p_k$ to the optimal price $p^*$. The second theorem assesses the {\it regret} associated with the algorithm given in Subsection~\ref{sec:alg} (relying on the estimates proposed in Section~\ref{sec:grad_est}).
While learning the optimal admission price $p^*$ by following this algorithm, the operator sets sub-optimal admission prices and thereby does not earn optimal revenue. This deficit between the theoretical optimal revenue and the actual revenue generated is called regret. We denote the cumulative regret after $L$ iterations of the algorithm by $R(L)$.

Let $N(\cdot)$ be the counting process corresponding to effective arrivals and let $W(\cdot)$ be the workload process in a M/G/1\, +\, $H(p,V)$ queue.
Let $\mathscr{F} = \left(\sigmaf_{k}\right)_{k \geq 0}$ be a filtration such that $\sigmaf_{0} = \emptyset$, and $\sigmaf_{k}$ consists of all queueing data from the first $k$ iterations of the algorithm, i.e., from time $0$ till time $\overline{T}_k$.
Then the regret can be written as
\begin{align*}
    R(L) = \sum_{k=1}^{L} \Psi(p^*) \exptn\left[T_k\right] - \exptn\Big[p_{k-1}\exptn\big[N\big(\overline{T}_{k-1}, \overline{T}_k\big] \big\vert \sigmaf_{k-1}\big]\Big].
\end{align*}
{Recalling that $N\big(\overline{T}_{k-1}, \overline{T}_k\big] = N\big(\overline{T}_{k-1}, \overline{T}_{k-1} + T^*_k\big] + 1$, and that $p_{k-1}$ is $\sigmaf_{k-1}$ measurable}, we can decompose the above as
\begin{align*}
    R(L) &= \sum_{k=1}^{L} \Psi(p^*) T^*_k - \exptn\Big[p_{k-1}\exptn\big[N\big(\overline{T}_{k-1}, \overline{T}_{k-1} + T^*_k\big] \big\vert \sigmaf_{k-1}\big]\Big] + \sum_{k=1}^{L} \Psi(p^*) \exptn\left[T_k - T^*_k\right] - \exptn\left[p_{k-1}\right],
\end{align*}
where $\exptn\left[T_k - T^*_k\right] = o\left(T^*_k\right)$ and $\exptn\left[p_{k-1}\right] = o\left(\exptn\Big[p_{k-1}\exptn\big[N\big(\overline{T}_{k-1}, \overline{T}_{k-1} + T^*_k\big] \big\vert \sigmaf_{k-1}\big]\Big]\right)$. Therefore,
\begin{align*}
    R(L) = \mathcal{O} \left(\sum_{k=1}^{L} \Psi(p^*) T^*_k - \exptn\Big[p_{k-1}\exptn\big[N\big(\overline{T}_{k-1}, \overline{T}_{k-1} + T^*_k\big] \big\vert \sigmaf_{k-1}\big]\Big]\right).
\end{align*}
Let $R_1(L)$ correspond to the regret due to picking a suboptimal price, and $R_2(L)$ correspond to the regret due to non-stationarity:
\[R(L)=R_1(L)+R_2(L)\ = \sum_{k=1}^{L} R_{1,k}+\sum_{k=1}^{L} R_{2,k},\]
where
\begin{align*}
    R_{1,k} &:= {\Psi(p^{*}) T^*_k - \exptn\Big[\Psi(p_{k-1})\Big] T^*_k},
    \\
    R_{2,k}&:=  {\exptn\Big[\Psi(p_{k-1})\Big] T^*_k - \exptn\Big[p_{k-1}\exptn\big[N\big(\overline{T}_{k-1}, \overline{T}_{k-1} + T^*_k\big] \big\vert \sigmaf_{k-1}\big]\Big]}.
\end{align*}

In order to derive an upper bound on $R(L)$, we analyze $R_1(L)$ and $R_2(L)$ in the following two subsections.

\subsection{Analysis for $R_1(L)$}

{The following lemma establishes an upper bound on $R_1(L)$.}
Define
    \begin{align*}
        \sigma := \max\left\{\frac{1}{\exptn\big[A_{\infty}(\underline{p})\big]}, \left\vert \frac{1}{\exptn[A_{\infty}(\overline{p})]} - \overline{p} \ G_{\max}\frac{1}{\exptn[A_{\infty}(
        \underline{p})]^2} \right\vert \right\}.
    \end{align*}

\begin{lemma} \label{lemma:Regret:Term 1}
   For any $L$,
    \begin{align*}
        R_1(L) \leq \sigma \sum_{k=1}^{L} T^*_k \exptn\big\vert p^{*}-p_{k-1} \big\vert.
    \end{align*}
\end{lemma}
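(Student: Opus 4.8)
The plan is to reduce the assertion to a single uniform bound on the magnitude of the revenue gradient over the price interval $\mathcal{P}$. I would start from the definition of $R_{1,k}$ and write
\[
R_{1,k} = T^*_k\big(\Psi(p^*) - \exptn[\Psi(p_{k-1})]\big) = T^*_k\,\exptn\big[\Psi(p^*) - \Psi(p_{k-1})\big].
\]
Because $p^*$ maximizes $\Psi$ over $\mathcal{P}$ and $p_{k-1}\in\mathcal{P}$ almost surely, the integrand $\Psi(p^*)-\Psi(p_{k-1})$ is nonnegative and hence equals $|\Psi(p^*)-\Psi(p_{k-1})|$. Applying the mean value theorem to $\Psi$ on the segment joining $p_{k-1}$ and $p^*$ (which lies in $\mathcal{P}$), I obtain a point $\xi_k\in\mathcal{P}$ with $\Psi(p^*)-\Psi(p_{k-1}) = \nabla\Psi(\xi_k)\,(p^*-p_{k-1})$, so that
\[
\exptn\big[\Psi(p^*)-\Psi(p_{k-1})\big] \leq \Big(\sup_{p\in\mathcal{P}}|\nabla\Psi(p)|\Big)\,\exptn\big|p^*-p_{k-1}\big|.
\]
Summing over $k$ then gives $R_1(L)\leq (\sup_{p\in\mathcal{P}}|\nabla\Psi(p)|)\sum_{k=1}^{L} T^*_k\,\exptn|p^*-p_{k-1}|$, so the entire claim reduces to showing $\sup_{p\in\mathcal{P}}|\nabla\Psi(p)|\leq\sigma$.

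The heart of the proof is therefore this uniform gradient bound, which I would derive from the explicit form \eqref{eqn:der Psi},
\[
\nabla\Psi(p) = \frac{1}{\exptn[A_\infty(p)]} - p\,\frac{\exptn[\nabla_p A_\infty(p)]}{\exptn[A_\infty(p)]^2}.
\]
Corollary \ref{corr:bounds_on_gradients} supplies $\nabla_p\exptn[A_\infty(p)]\in[0,G_{\max}]$, and I would exploit this in two ways. First, nonnegativity of this derivative makes $\exptn[A_\infty(\cdot)]$ nondecreasing, so $\exptn[A_\infty(\underline{p})]\leq \exptn[A_\infty(p)]\leq \exptn[A_\infty(\overline{p})]$ throughout $\mathcal{P}$. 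Second, combined with $p\geq 0$ it makes the subtracted term nonnegative, giving the upper bound $\nabla\Psi(p)\leq 1/\exptn[A_\infty(p)]\leq 1/\exptn[A_\infty(\underline{p})]$. For the lower bound I would majorize the subtracted term by $\overline{p}\,G_{\max}/\exptn[A_\infty(\underline{p})]^2$ (using $p\leq\overline{p}$, $\exptn[\nabla_p A_\infty(p)]\leq G_{\max}$, and the denominator monotonicity) and minorize the leading term by $1/\exptn[A_\infty(\overline{p})]$, to reach
\[
\nabla\Psi(p) \geq \frac{1}{\exptn[A_\infty(\overline{p})]} - \overline{p}\,G_{\max}\,\frac{1}{\exptn[A_\infty(\underline{p})]^2}.
\]
Since $\nabla\Psi(p)$ lies between these two endpoints, its absolute value is at most the larger of the two endpoint magnitudes, which is precisely $\sigma$.

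The final step is simply to substitute $\sup_{p\in\mathcal{P}}|\nabla\Psi(p)|\leq\sigma$ into the summed bound of the first paragraph. I do not expect a genuine obstacle: the only delicate point is the uniform gradient estimate, and within it the observation that the monotonicity of $\exptn[A_\infty(\cdot)]$ — needed to replace $\exptn[A_\infty(p)]$ by its values at the endpoints $\underline{p}$ and $\overline{p}$ — is itself a consequence of the nonnegativity of $\nabla_p\exptn[A_\infty(p)]$ from Corollary \ref{corr:bounds_on_gradients}. I would also note in passing that the mean value theorem step is legitimate because $\Psi$ is differentiable on $\mathcal{P}$ with gradient given by \eqref{eqn:der Psi}, as justified by Proposition \ref{propn:der[exptn A] = exptn[der A]}.
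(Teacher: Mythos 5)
Your proposal is correct and follows essentially the same route as the paper: a mean-value-theorem step reducing $R_{1,k}$ to $T^*_k\,\exptn|p^*-p_{k-1}|$ times a uniform bound on $|\nabla\Psi|$, with that bound obtained from the representation \eqref{eqn:der Psi} and Corollary \ref{corr:bounds_on_gradients}, yielding exactly the two endpoint expressions whose maximum of absolute values is $\sigma$. The only difference is cosmetic: you make explicit the monotonicity of $\exptn[A_\infty(\cdot)]$ (a consequence of $\nabla_p\exptn[A_\infty(p)]\geq 0$) that the paper uses implicitly when replacing $\exptn[A_\infty(p)]$ by its values at $\underline{p}$ and $\overline{p}$.
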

\begin{proof}
     By the mean-value theorem for random variables \cite{MVT_random_variables}, for a random variable $\xi_{k-1}$ taking values in $(0,1)$, and for $\widetilde{p}_{k-1} := \xi_{k-1}p^* + \big(1-\xi_{k-1}\big)p_{k-1}$,
    \begin{align*}
        R_{1,k} = T^*_k \exptn\big[\Psi(p^*) - \Psi(p_{k-1})\big] &= T^*_k \exptn\Big[(p^* - p_{k-1}) \exptn\big[\nabla_p \Psi\big(\widetilde{p}_{k-1}\big) \ \big\vert p_{k-1}\big]\Big] 
        \\
        &\leq T^*_k \exptn\Big[\big\vert p^* - p_{k-1}\big\vert \exptn\Big[\big\vert \nabla_p \Psi\big(\widetilde{p}_{k-1}\big)\big\vert \ \big\vert p_{k-1}\Big] \Big].
    \end{align*}
    With reference to Corollary \ref{corr:bounds_on_gradients},
    \begin{align*}
        \frac{1}{\exptn[A_{\infty}(\overline{p})]} - \overline{p} \ G_{\max}\frac{1}{\exptn[A_{\infty}(
        \underline{p})]^2}  \leq \nabla_p \Psi(p) &= \frac{1}{\exptn[A_{\infty}(p)]} - p\frac{\nabla\exptn\big[A_{\infty}(p)\big]}{\exptn[A_{\infty}(p)]^2} \leq \frac{1}{\exptn\big[A_{\infty}(\underline{p})\big]}.
    \end{align*}
    Therefore,
    \begin{align*}
        R_{1,k} &\leq \max\left\{\frac{1}{\exptn\big[A_{\infty}(\underline{p})\big]}, \left\vert \frac{1}{\exptn[A_{\infty}(\overline{p})]} - \overline{p} \ G_{\max}\frac{1}{\exptn[A_{\infty}(
        \underline{p})]^2} \right\vert \right\} T^*_k \exptn\big\vert p^* - p_{k-1}\big\vert= \sigma \ T^*_k \exptn\big\vert p^* - p_{k-1}\big\vert.
    \end{align*}
    From this, the result follows.
\end{proof}

\subsection{Analysis for $R_2(L)$}

{In this subsection we state and prove an upper bound on $R_2(L)$.}

\begin{lemma} \label{lemma:Regret:Term 2}
   For any $L$,
    \begin{align*}
        R_2(L) \leq \overline{p} \ \frac{\Lambda H\big(\underline{p}, 0\big)}{1-\gamma_1} \sum_{k=1}^{L}  \exptn\big\vert W_{\infty}(p_{k-1}) - W\big(\overline{T}_{k-1}\big) \big\vert.
    \end{align*}
\end{lemma}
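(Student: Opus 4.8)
The plan is to recognize $\Psi(p_{k-1})\,T^*_k$ as the expected revenue of a \emph{stationary-initialized} copy of the queue over a window of length $T^*_k$, so that $R_{2,k}$ collapses into the difference between the effective-arrival counts of the transient (actual) queue and this stationary copy; the coupling bound of Proposition~\ref{propn:model:theoretical_results}(c) then finishes the job.

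First I would, conditionally on $\sigmaf_{k-1}$ (under which $p_{k-1}$ is deterministic), introduce a shadow queue driven by the same seeds $\{\zeta_n\}$ and services $\{S_n\}$ as the algorithm from time $\overline{T}_{k-1}$ onward, but started from the time-stationary workload $W_{\infty}(p_{k-1})$. Its effective-arrival count $N_2(0,T^*_k]$ satisfies, by the thinned-Poisson (stochastic-intensity) representation $\Lambda H(p_{k-1},W(s^-))$ of the effective stream together with stationarity of $W(\cdot)$,
\[
\exptn\big[N_2(0,T^*_k]\mid\sigmaf_{k-1}\big]=T^*_k\,\Lambda\,\exptn\big[H(p_{k-1},W_{\infty}(p_{k-1}))\big]=\frac{T^*_k}{\exptn[A_{\infty}(p_{k-1})]},
\]
the last equality being Proposition~\ref{propn:formula:Psi}. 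Hence $\Psi(p_{k-1})\,T^*_k=p_{k-1}\,\exptn[N_2(0,T^*_k]\mid\sigmaf_{k-1}]$, and writing $N_1(0,T^*_k]:=N(\overline{T}_{k-1},\overline{T}_{k-1}+T^*_k]$ for the actual count I obtain
\[
R_{2,k}=\exptn\Big[p_{k-1}\big(\exptn[N_2(0,T^*_k]\mid\sigmaf_{k-1}]-\exptn[N_1(0,T^*_k]\mid\sigmaf_{k-1}]\big)\Big].
\]

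Next I would bound $p_{k-1}\le\overline{p}$ and move the absolute value inside the conditional expectation via the triangle inequality, reducing the problem to controlling $\exptn[\,|N_1(0,T^*_k]-N_2(0,T^*_k]|\mid\sigmaf_{k-1}]$. Here queue~1 and queue~2 share the price $p_{k-1}$, the seeds, and the services, and differ only in their initial workloads $\overline{W}^1_0=W(\overline{T}_{k-1})$ and $\overline{W}^2_0=W_{\infty}(p_{k-1})$; they are thus coupled exactly as in Section~\ref{sec:queue_coupling}. Applying Proposition~\ref{propn:model:theoretical_results}(c) conditionally on $\sigmaf_{k-1}$ bounds this count difference by $\tfrac{\Lambda H(\underline{p},0)}{1-\gamma_1}\,\exptn[\,|W(\overline{T}_{k-1})-W_{\infty}(p_{k-1})|\mid\sigmaf_{k-1}]$; taking outer expectations and summing over $k=1,\dots,L$ yields the stated bound.

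The hard part will be the stationary count identity: I must justify that the expected number of effective arrivals over $[0,T^*_k]$ is \emph{exactly} linear in $T^*_k$ with slope $1/\exptn[A_{\infty}(p_{k-1})]$. This requires initializing the shadow queue in the \emph{time}-stationary law $W_{\infty}(p_{k-1})$ — rather than the Palm law $\overline{W}_{\infty}(p_{k-1})$ seen right after arrivals — so the workload marginal is identical at every instant, and it requires the compensator/PASTA computation that turns the stochastic intensity into the rate $\Lambda\exptn[H(p_{k-1},W_{\infty}(p_{k-1}))]$ that Proposition~\ref{propn:formula:Psi} identifies with $1/\exptn[A_{\infty}(p_{k-1})]$. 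A secondary point to verify is that the coupling of Proposition~\ref{propn:model:theoretical_results} tolerates a general (non-Palm) initial workload at time $0$, which it does because its Lindley recursion only needs a finite starting workload.
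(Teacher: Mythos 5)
Your proof is correct and takes essentially the same route as the paper: the same decomposition of $R_{2,k}$ via a stationary-initialized shadow queue coupled to the actual system through common seeds and service requirements, the bound $p_{k-1}\le\overline{p}$ with the triangle inequality, and the count-difference coupling bound of Proposition~\ref{propn:model:theoretical_results}(c), followed by taking outer expectations and summing over $k$. If anything, you are more careful than the paper on the one step it leaves implicit — justifying the identity $\Psi(p_{k-1})\,T^*_k = p_{k-1}\,\exptn\big[N_{\mathrm{stat}}(0,T^*_k]\mid\sigmaf_{k-1}\big]$ for the time-stationary copy via the stochastic-intensity/PASTA argument and Proposition~\ref{propn:formula:Psi} — and you correctly invoke part (c) of Proposition~\ref{propn:model:theoretical_results}, where the paper's proof mistakenly cites part (d).
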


\begin{proof}
    Fix $k \geq 1 $ and let $\mathcal F_{k-1}$ denote the filtration up to time $k-1$. 
    By construction, $p_{k-1}$ and $W(\overline T_{k-1})$ are $\mathcal F_{k-1}$ measurable.
    By definition of $\Psi(\cdot)$,
    \begin{align*}
        \Psi(p_{k-1})T_k^* = p_{k-1}\,\mathbb E\!\left[
        N_1\big(\overline T_{k-1},\overline T_{k-1}+T_k^*\big)
        \,\big|\, p_{k-1}, W_{\infty}(p_{k-1})
        \right],
    \end{align*}
    where $N_1(\cdot)$ denotes the counting process of effective arrivals for an M/G/1$+H(p,V)$ queue with admission price $p_{k-1}$ and initial workload
    $W_{\infty}(p_{k-1})$. 
    Similarly,
    \begin{align*}
        \mathbb E\!\left[p_{k-1}
        N\big(\overline T_{k-1},\overline T_{k-1}+T_k^*\big)
        \,\big|\,\mathcal F_{k-1} \right] =
        p_{k-1}\,\mathbb E\!\left[
        N_2\big(\overline T_{k-1},\overline T_{k-1}+T_k^*\big) \,\big|\,\mathcal F_{k-1} \right],
    \end{align*}
    where $N_2(\cdot)$ is defined as $N_1(\cdot)$ but now with initial workload $W(\overline T_{k-1})$.
    Therefore,
    \begin{align*}
        R_{2,k} = p_{k-1} \,\bigg\vert \mathbb E\!\left[
        N_1\big(\overline T_{k-1},\overline T_{k-1}+T_k^*\big)
        \,\big|\, p_{k-1}, W_{\infty}(p_{k-1})
        \right] - \,\mathbb E\!\left[
        N_2\big(\overline T_{k-1},\overline T_{k-1}+T_k^*\big) \,\big|\,\mathcal F_{k-1} \right]\bigg\vert
    \end{align*}
    Recalling the coupling construction in  Proposition~\ref{propn:model:theoretical_results}(c) with $\overline{W}^1_0  = W_{\infty}(p_{k-1}), \overline{W}^2_0 = W\big(\overline{T}_{k-1}\big)$, by applying the triangle inequality and using $p_{k-1} \leq \overline{p}$, we conclude
    \begin{align*}
        R_{2,k} \leq \overline{p} \, \frac{\Lambda H(\underline{p},0)}{1-\gamma_1} \mathbb E\!\left[ \Big| W_{\infty}(p_{k-1})-W(\overline T_{k-1}) \Big|
        \right].
    \end{align*}
    Summing over $k=1,\dots,L$ completes the proof.
\end{proof}

\subsection{Analysis for $R(L)$}

{
In the previous two subsections, we derived upper bounds on $R_1(L)$ and $R_2(L)$. This now enables us to assess, in Theorem~\ref{thm:regret:upper_bound}, the growth rate of $R(L)$. Before proceeding, however, we state in Theorem~\ref{thm:price_convergence_rate} a useful result that characterizes the rate of convergence of the price iterates $p_k$. The proof of this theorem draws on elements from the line of reasoning used in its counterpart in \cite{ChenLiuHong2024}.
}
\begin{ass}\label{assk}
    For $k \geq 1$, define
    \begin{align*}
        B_k = \exptn\left[\exptn\left[\widehat{\nabla\Psi}(p_{k-1}) - \nabla\Psi(p_{k-1}) \big\vert \mathcal{F}_{k-1}\right]^2\right]^{\frac{1}{2}}.
    \end{align*}
    Let $\frac{1}{2} < \alpha \leq 1$, $\eta > 0$, and let $\eta_k := \eta k^{-\alpha}$ where $\eta$, $\left\{T^*_k\right\}_{k \geq 1}$ are chosen such that
    \begin{enumerate}
        \item[(a)] $\left(1 + \frac{1}{k}\right)^{\alpha} \leq 1 + \frac{K_0}{2}\eta_k$,
        \item[(b)] $B_k \leq \frac{K_0}{8} k^{-\alpha}$.
    \end{enumerate}
\end{ass}
Let $k_0 := \max\big\{k: 2\eta_k K_0 > 1\big\}$, and define $C_4 := \max\big\{ k_0^{\alpha}\big(\overline{p}-\underline{p}\big)^2, 8 \eta \,C_1/K_0 \big\}$.

\begin{thm}\label{thm:price_convergence_rate}
   Under Assumption \ref{assk}, for any $k \geq 1$,
    \begin{align*}
        \exptn\left[\left(p_{k-1}-p^*\right)^2\right] \leq C_4 k^{-\alpha}.
    \end{align*}
\end{thm}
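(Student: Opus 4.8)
The plan is to convert the projected stochastic-gradient recursion \eqref{recur} into a scalar recursion for the mean-squared error $a_k := \exptn\big[(p_{k-1}-p^*)^2\big]$ and then close it by induction on $k$, with the threshold $k_0$ separating a ``large step-size'' regime (handled by the bounded diameter of $\mathcal P$) from the regime in which the recursion genuinely contracts. First I would remove the projection: since $p^*\in\mathcal P$ is a fixed point of $\pi_{\mathcal P}$ and the projection onto an interval is non-expansive, \eqref{recur} gives $(p_k-p^*)^2 \le \big(p_{k-1}-p^* + \eta_k\,\widehat{\nabla\Psi}(p_{k-1})\big)^2$. Expanding and taking $\exptn[\,\cdot\,\mid\mathcal F_{k-1}]$, I would insert the bias decomposition $\exptn[\widehat{\nabla\Psi}(p_{k-1})\mid\mathcal F_{k-1}] = \nabla\Psi(p_{k-1})+\beta_k$ from \eqref{eqn:bias:beta}, the strong-concavity consequence of Assumption~\ref{assumption:Psi}, namely $(p_{k-1}-p^*)\nabla\Psi(p_{k-1})\le -K_0(p_{k-1}-p^*)^2$, and the uniform second-moment bound $\nu_k\le C_1$ from Proposition~\ref{propn:bias_var}. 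After a further expectation this yields
\[
a_{k+1} \;\le\; (1-2K_0\eta_k)\,a_k \;+\; 2\eta_k\,\exptn\big[(p_{k-1}-p^*)\beta_k\big] \;+\; \eta_k^2 C_1 .
\]

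Next I would control the bias cross-term. Writing $|\beta_k|\le B_k$ for the (almost-sure) bias bound, which by \eqref{eqn:bias:final_rate} decays like $\eta_{k-1}/(T^*_{k-1})^2$ and is assumed in Assumption~\ref{assk}(b) to satisfy $B_k\le \tfrac{K_0}{8}k^{-\alpha}$, and combining this with Cauchy--Schwarz and the induction hypothesis $\exptn|p_{k-1}-p^*|\le\sqrt{a_k}\le\sqrt{C_4}\,k^{-\alpha/2}$, the cross-term is $\mathcal O\big(k^{-5\alpha/2}\big)$, hence of strictly lower order than the variance contribution $\eta_k^2C_1=\eta^2C_1\,k^{-2\alpha}$.

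Finally I would run the induction. For $k\le k_0$ the bound is immediate from the diameter of $\mathcal P$: $a_k\le(\overline p-\underline p)^2\le C_4 k^{-\alpha}$, since $C_4\ge k_0^\alpha(\overline p-\underline p)^2$ and $k^\alpha\le k_0^\alpha$. For $k>k_0$ one has $0\le 1-2K_0\eta_k<1$, so the recursion truly contracts; assuming $a_k\le C_4 k^{-\alpha}$, the goal is $a_{k+1}\le C_4(k+1)^{-\alpha}$. The key algebraic step compares $(k+1)^{-\alpha}$ with $k^{-\alpha}$: Assumption~\ref{assk}(a), $(1+1/k)^\alpha\le 1+\tfrac{K_0}{2}\eta_k$, gives $C_4(k+1)^{-\alpha}\ge C_4 k^{-\alpha}\big(1-\tfrac{K_0}{2}\eta_k\big)$, so it suffices that the variance-plus-bias terms fit into the ``contraction surplus'' $\tfrac{3}{2}K_0\eta_k\,C_4 k^{-\alpha}$. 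Substituting $\eta_k=\eta k^{-\alpha}$ reduces the variance part to the inequality $\eta^2 C_1 \le \tfrac{3}{2}K_0\eta\,C_4$, which holds with room to spare because $C_4\ge 8\eta C_1/K_0$, and the leftover room absorbs the lower-order bias term.

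The main obstacle I expect is this last closing step: calibrating the constant $C_4$ and the step-size conditions in Assumption~\ref{assk} so that the contraction surplus $\tfrac{3}{2}K_0\eta_k C_4 k^{-\alpha}$ simultaneously dominates the variance term $\eta_k^2 C_1$ and the bias term \emph{for every} $k>k_0$, not merely asymptotically. The delicacy is threefold: the contraction coefficient depends on $k$ through $\eta_k$, the comparison of $k^{-\alpha}$ with $(k+1)^{-\alpha}$ is exactly tight at the rate dictated by Assumption~\ref{assk}(a) (in particular forcing a lower bound on $\eta$ when $\alpha=1$), and the bias estimate feeds the induction hypothesis back through $\sqrt{a_k}$, so the argument must be self-consistent. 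Handling the boundary index $k=k_0$, where the coefficient $1-2K_0\eta_k$ changes sign, requires a little additional care but is otherwise routine.
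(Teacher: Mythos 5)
Your proposal is correct and takes essentially the same route as the paper: non-expansive projection, expansion of the squared error, strong concavity via Assumption~\ref{assumption:Psi}, the bias/variance bounds of Proposition~\ref{propn:bias_var}, and then an induction on $k$ driven by Assumption~\ref{assk}(a)--(b) with the diameter of $\mathcal{P}$ covering $k\le k_0$ --- this closing induction is precisely the part the paper outsources to \cite{ChenLiuHong2024}, so you are filling in what the paper cites rather than diverging from it. The only substantive difference is your treatment of the bias cross-term (an almost-sure bound on $\beta_k$ multiplied by $\sqrt{a_k}$ from the induction hypothesis, giving an $\mathcal{O}(k^{-5\alpha/2})$ remainder, versus the paper's Cauchy--Schwarz combined with $2a\le 1+a^2$, which feeds the bias into both the contraction coefficient and the additive term); both variants close under the same kind of mild lower-bound calibration of $C_4$ that you correctly flag as the delicate point.
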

\begin{proof}
    Deferred to Appendix \ref{appendix:proof:th:price_convergence_rate}.
\end{proof}
An immediate consequence of Theorem~\ref{thm:price_convergence_rate}, obtained using Jensen's inequality, is that \[\exptn\big[\big\vert p_{k-1}-p^*\big\vert\big] \leq C_4 k^{-\alpha/2}.\]
This gives us the rate of convergence of iterates $\{p_k\}_{k \geq 1}$ to $p^*$.
This rate is in terms of the parameter $\alpha$ appearing in Assumption \ref{assk}.
By Equation \eqref{eqn:bias_bound}, choosing a sequence $\big\{T^*_k\big\}_{k \geq 1}$ which grows fast tightens the rate at which the bias decays.
This allows a potentially larger value of $\alpha$ to satisfy Assumption \ref{assk}(b), and thereby to obtain a faster convergence rate to $p^*$.
What this means is that using larger window sizes in the learning algorithm reduces the bias in the gradient estimators $\widehat{\nabla\Psi}(p)$, making them more accurate and enabling faster convergence to $p^*$. Since the service operator’s objective is to minimize regret by learning the optimal price as quickly as possible, a natural trade-off arises: larger window sizes reduce the number of iterations required but increase the time per iteration. We explore this trade-off through simulation experiments in Section~\ref{subsection:numerical_experiments:window_size}.

We now have the full machinery in place to establish an upper bound on the total regret $R(L)$ in the regime where the number of iterations $L$ becomes large.

\begin{thm}\label{thm:regret:upper_bound}
    Under Assumption \ref{assk}, as $L \rightarrow \infty$,
    \begin{align*}
        R(L) = \mathcal{O}\left(\sum_{k=1}^{L} T^*_k \ k^{-\alpha/2}\right).
    \end{align*}
\end{thm}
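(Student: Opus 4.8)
The plan is to bound the two regret contributions $R_1(L)$ and $R_2(L)$ separately, using the decomposition $R(L)=R_1(L)+R_2(L)$ already established, and then to show that $R_1(L)$ produces exactly the claimed rate while $R_2(L)$ is asymptotically negligible against it. The convergence-rate estimate of Theorem~\ref{thm:price_convergence_rate} is the workhorse for $R_1(L)$, whereas the coupling bounds of Proposition~\ref{propn:model:theoretical_results} (as already exploited in Lemma~\ref{lemma:omega3}) handle $R_2(L)$.

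For $R_1(L)$ I would start from Lemma~\ref{lemma:Regret:Term 1}, which gives $R_1(L)\le \sigma\sum_{k=1}^{L} T^*_k\,\exptn|p^*-p_{k-1}|$. Passing from the first absolute moment to the second via Jensen's inequality, $\exptn|p^*-p_{k-1}|\le (\exptn[(p^*-p_{k-1})^2])^{1/2}$, and then invoking Theorem~\ref{thm:price_convergence_rate}, I obtain $\exptn|p^*-p_{k-1}|\le \sqrt{C_4}\,k^{-\alpha/2}$. This immediately yields $R_1(L)\le \sigma\sqrt{C_4}\sum_{k=1}^{L} T^*_k\,k^{-\alpha/2}=\mathcal{O}\big(\sum_{k=1}^{L} T^*_k\,k^{-\alpha/2}\big)$, so that $R_1$ alone already accounts for the stated bound.

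For $R_2(L)$ I would start from Lemma~\ref{lemma:Regret:Term 2}, which reduces the task to controlling $\exptn\big|W_{\infty}(p_{k-1})-W(\overline{T}_{k-1})\big|$, the distance at the start of window $k$ between the actual workload and the workload that is stationary under the current price. This is precisely the transient-relaxation quantity $\delta_{k-1}$ analyzed in the proof of Lemma~\ref{lemma:omega3}: the coupling contraction factor $\gamma_1^{N_{k-1}}$ erases the dependence on the pre-window state, while the residual is governed by the one-step price increment $|p_{k-2}-p_{k-1}|=\eta_{k-1}|\widehat{\nabla\Psi}(p_{k-2})|$, giving $\exptn\big|W_{\infty}(p_{k-1})-W(\overline{T}_{k-1})\big|=\mathcal{O}\big(\eta_{k-1}/(T^*_{k-1})^2\big)$. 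Hence $R_2(L)=\mathcal{O}\big(\sum_{k=1}^{L}\eta_{k-1}/(T^*_{k-1})^2\big)$. Since $\eta_{k-1}=\mathcal{O}(k^{-\alpha})$, the sequence $T^*_{k-1}$ is bounded below by $T^*_1>0$, and $\alpha/2\le\alpha$, each summand satisfies $\eta_{k-1}/(T^*_{k-1})^2=\mathcal{O}(k^{-\alpha})=\mathcal{O}(k^{-\alpha/2})=\mathcal{O}(T^*_k\,k^{-\alpha/2})$, so $R_2(L)$ is dominated term-by-term by the $R_1(L)$ bound. Combining the two estimates then gives $R(L)=\mathcal{O}\big(\sum_{k=1}^{L} T^*_k\,k^{-\alpha/2}\big)$.

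The $R_1$ half is a clean composition of Jensen's inequality with Theorem~\ref{thm:price_convergence_rate} and presents no real difficulty. The delicate point, and the step I expect to be the main obstacle, is $R_2$: one must verify that $\exptn\big|W_{\infty}(p_{k-1})-W(\overline{T}_{k-1})\big|$ genuinely inherits the $\mathcal{O}\big(\eta_{k-1}/(T^*_{k-1})^2\big)$ decay from the bias analysis. In particular this requires reconciling the time-stationary workload $W_{\infty}$ with the post-arrival stationary workload $\overline{W}_{\infty}$ used in Lemma~\ref{lemma:omega3}, and confirming that the contraction argument survives under full (rather than conditional) expectation. Once that decay is secured, the term-wise domination by $T^*_k\,k^{-\alpha/2}$ via $\alpha/2\le\alpha$ and $T^*_k\ge T^*_1>0$ is routine.
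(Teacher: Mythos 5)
Your proposal is correct and follows essentially the same route as the paper: $R_1(L)$ is handled via Lemma~\ref{lemma:Regret:Term 1}, Jensen's inequality and Theorem~\ref{thm:price_convergence_rate}, while $R_2(L)$ is bounded via Lemma~\ref{lemma:Regret:Term 2} together with the $\delta$/$\rho$ estimates \eqref{eqn:rho:defn}, \eqref{eqn:delta:stochastic upper bound}, \eqref{eqn:rho:stochastic upper bound} from the bias analysis, yielding $R_2(L)=\mathcal{O}\bigl(\sum_{k=1}^{L}\eta_{k-1}/(T^*_{k-1})^2\bigr)=\mathcal{O}\bigl(\sum_{k=1}^{L}k^{-\alpha}\bigr)$, which is then absorbed into the $R_1$ bound. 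Your explicit term-by-term domination argument (using $T^*_k\ge T^*_1>0$ and $\alpha/2\le\alpha$), and your caveat about reconciling $W_\infty$ with $\overline{W}_\infty$, merely make precise steps the paper treats implicitly.
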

\begin{proof}
    Using Theorem \ref{thm:price_convergence_rate} and Jensen's inequality,
    \begin{align*}
        \exptn\left[\big\vert p_{k-1}-p^* \big\vert\right] \leq \exptn\left[(p_{k-1}-p^*)^2\right]^{\frac{1}{2}} = \mathcal{O}\left(k^{-\alpha/2}\right),
    \end{align*}
    which implies, in {conjunction with Lemma \ref{lemma:Regret:Term 1}} that, as $L\to\infty$,
    \begin{align*}
        R_1(L) = \mathcal{O}\left(\sum_{k=1}^{L} T^*_k \ k^{-\alpha/2}\right).
    \end{align*}
   Equations \eqref{eqn:rho:defn}, \eqref{eqn:delta:stochastic upper bound}, and \eqref{eqn:rho:stochastic upper bound}, together with Lemma \ref{lemma:Regret:Term 2}, lead to
    \begin{align*}
        R_2(L) = \mathcal{O}\left(\sum_{k=1}^{L} \eta_{k-1} \frac{1}{{T^*_{k-1}}^2}\right) = \mathcal{O}\left(\sum_{k=1}^{L} k^{-\alpha}\right)
    \end{align*}
    as $L\to\infty$. 
    Upon combining the above two displays,
    \begin{align*}
        R(L) = \mathcal{O}\left(\sum_{k=1}^{L} T^*_k \ k^{-\alpha/2}\right) + \mathcal{O}\left(\sum_{k=1}^{L} k^{-\alpha}\right) = \mathcal{O}\left(\sum_{k=1}^{L} T^*_k \ k^{-\alpha/2}\right)
    \end{align*}
    as $L\to\infty$.
\end{proof}

\section{Numerical experiments} \label{section:numerical_experiments}

In the preceding sections, we provided a comprehensive mathematical analysis of our {SGD-based online learning algorithm}.
Section~\ref{subsection:numerical_experiments:impact_of_service_time_distribution} empirically investigates how the service-time distribution influences the optimal price $p^*$.
In Section~\ref{subsection:numerical_experiments:convergence_rate}, we study the algorithm's performance through a series of illustrative examples that demonstrate its convergence behavior. Subsequently, in Section~\ref{subsection:numerical_experiments:window_size}, we examine how different window size choices affect the convergence rate toward the optimal price, providing insight into their influence on algorithmic efficiency. We consider two examples of joining functions: the exponential form from Example~\ref{example: Psi and H}, and the following {power-law} form.

\setcounter{example}{1}
\begin{example}\label{example: poly_H}
    For $\theta_1, \theta_2 > 0$, suppose that $H(p, V) = \left(1+\theta_1 p^2 + \theta_2 V^2\right)^{-1}$ is the joining probability of a customer when the admission price is $p$, and the prospective waiting time is $V$.
    \hfill $\blacklozenge$
\end{example}

\begin{remark}
    This section includes several examples based on Gamma-distributed service times.
    We emphasize that our notation $\mathrm{Gamma}\bigl(\alpha, \beta\bigr)$ denotes a Gamma distribution with shape parameter $\alpha$ and rate parameter $\beta$.
    Its mean is $\alpha/\beta$, and its variance is $\alpha/\beta^2$. {We recall that ${\rm Exp}(\beta)$ denotes an exponentially distributed random variable with mean $1/\beta$ (and hence variance $1/\beta^2$).} \hfill $\spadesuit$
\end{remark}

\subsection{{Impact of service-time distribution}} \label{subsection:numerical_experiments:impact_of_service_time_distribution}

In this subsection, our goal is to {illustrate} how the service-time distribution $G(\cdot)$ impacts the optimal price $p^*$ for the system.
To this end, Figure~\ref{fig:numerical_experiments:impact_of_service_time_distribution} presents three sets of plots of the objective function $\Psi(p)$, each corresponding to a different family of service-time distributions.

To the best of our knowledge, no closed-form expression for $\Psi(p)$ is available, not even for the simplest setting of Example~\ref{example: Psi and H}.
Therefore, to obtain a reliable approximation of $\Psi(p)$ and thereby, the optimal price $p^*$, we simulate each example over a fine grid of prices for $N_{\text{eff}} = 5 \times 10^5$ effective arrivals each. 
We then compute the empirical average revenue per unit time $\widehat{\Psi}(p) := p/\widehat{A_{\infty}}(p)$, where $\widehat{A_{\infty}}(p)$ is the sample average of the $N_{\text{eff}}$ effective interarrival times.
Finally, $p^*$ is approximated from the grid as the price which maximizes $\widehat{\Psi}(p)$.
Throughout these experiments, all other model parameters are held fixed. 
In particular, we choose $\Lambda = 20$ and $H(p, V) = \exp\bigl( -0.1 p - 0.2 V \bigr)$.

Figure~\ref{fig:numerical_experiments:Gamma:varying_mean} examines a single family of service-time distributions while varying only their mean. 
Specifically, we consider $G(\cdot) \sim \mathrm{Gamma}\bigl(\mu^2, \mu\bigr), \ \mu \in \{1,2,3,4\}$. 
Under this parameterization, the service-time distribution has mean $\mu$ and variance $1$, so that the four systems differ only in their mean.
The figure also reports the optimal price and associated maximum revenue for each case. The optimal price increases with the mean service time, which is intuitively natural: larger mean service times correspond to larger jobs. At low prices, each arrival generates limited revenue while encouraging high joining probabilities, thereby increasing congestion and lengthening effective interarrival times. A better balance between price and congestion is thus achieved at a moderately higher admission price.
We encourage the reader to interpret these observations in light of Equation~\eqref{eqn:formula:Psi:1}, which makes the dependencies of the objective function on the admission price and the effective interarrival time explicit.

Figure \ref{fig:numerical_experiments:Exp}, based on $\text{Exp}(1/\mu)$ service times with mean $\mu$ and variance $\mu^2$, leads to the same conclusion: the optimal price increases with the mean service time.

Finally, Figure \ref{fig:numerical_experiments:Gamma:varying_variance} examines the effect of service-time variability while keeping the mean fixed.
We consider $\mathrm{Gamma}\bigl(1/\sigma^{2},1/\sigma^{2}\bigr)$ service times, all with mean 1 but with variance $\sigma^{2}$.
In this setting, we observe a notable trend: the optimal price $p^{*}$ decreases as the variance of $G(\cdot)$ increases.

Another striking observation from all three figures is that, for any fixed price $p$, the function $\Psi(p)$ decreases as either the mean or the variance of the service-time distribution increases. 
This behavior is readily explained using \eqref{eqn:formula:Psi:1}: increases in the mean or variance of the service time induce an increase in the mean interarrival time, which in turn lowers $\Psi(p)$.

We conclude this subsection by summarizing the empirical patterns observed in our numerical experiments.
Within the range of parameter settings tested, and keeping all other parameters fixed, we find that:
\begin{enumerate}
    \item $p^*$ increases whenever the mean service time increases.
    \item $p^*$ increases whenever the variance of the service distribution decreases.
    \item $\Psi(p)$, and in particular $\Psi(p^*)$, increases whenever the mean service time increases. 
    \item $\Psi(p)$, and in particular $\Psi(p^*)$, decreases whenever the variance of the service time increases. 
\end{enumerate}

\begin{figure}[t!]
    \centering
    \begin{subfigure}{0.48\textwidth}
        \centering
        \includegraphics[height=5cm]{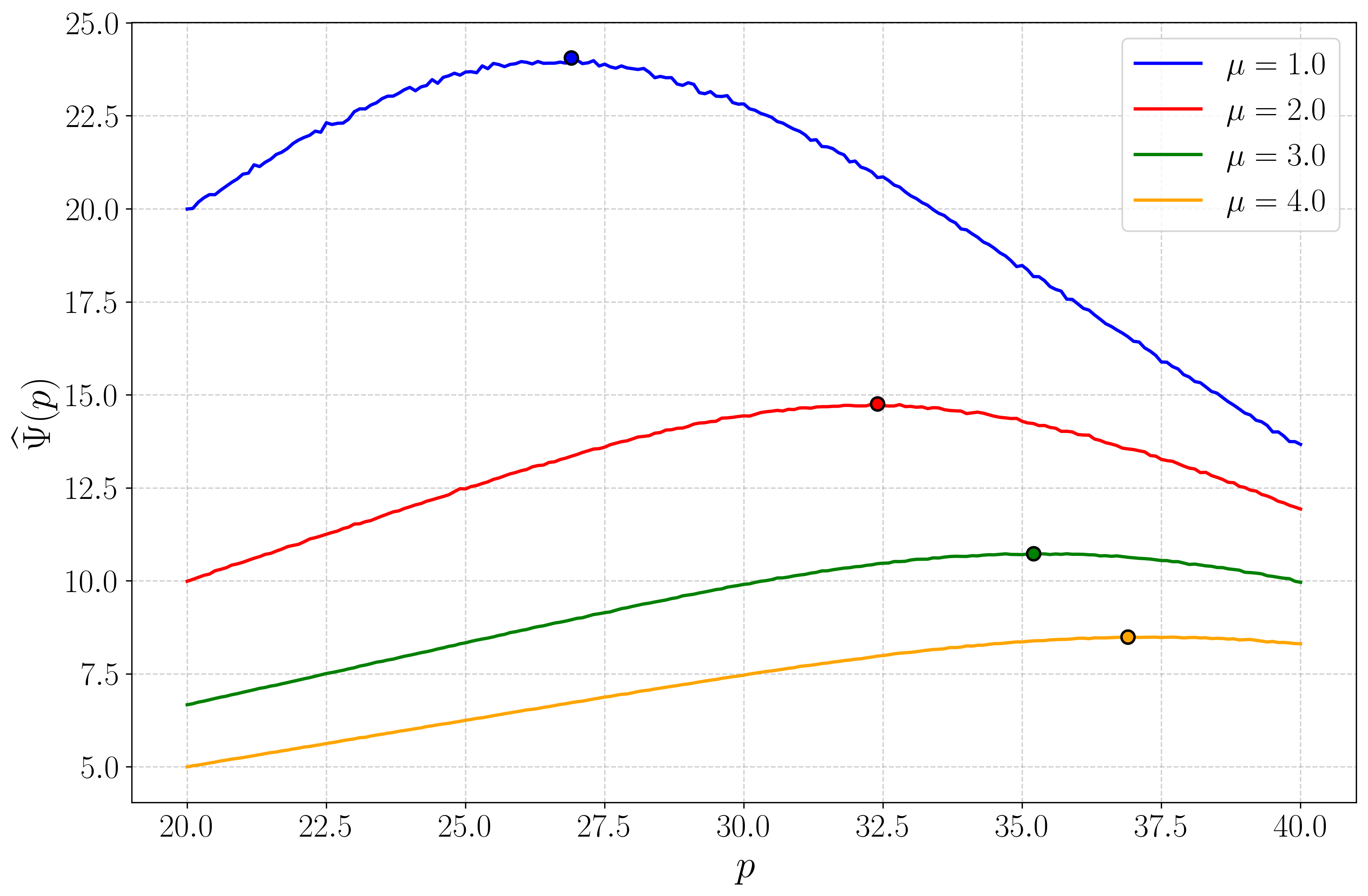}
        \caption{$\widehat{\Psi}(p)$ versus $p$ for $G(\cdot) \sim \text{Gamma}\big(\mu^2, \mu\big)$ for various choices of $\mu$.}
        \label{fig:numerical_experiments:Gamma:varying_mean}
    \end{subfigure}
    \hspace{0.01\textwidth}  
    \begin{subfigure}{0.48\textwidth}
        \centering
        \includegraphics[height=5cm]{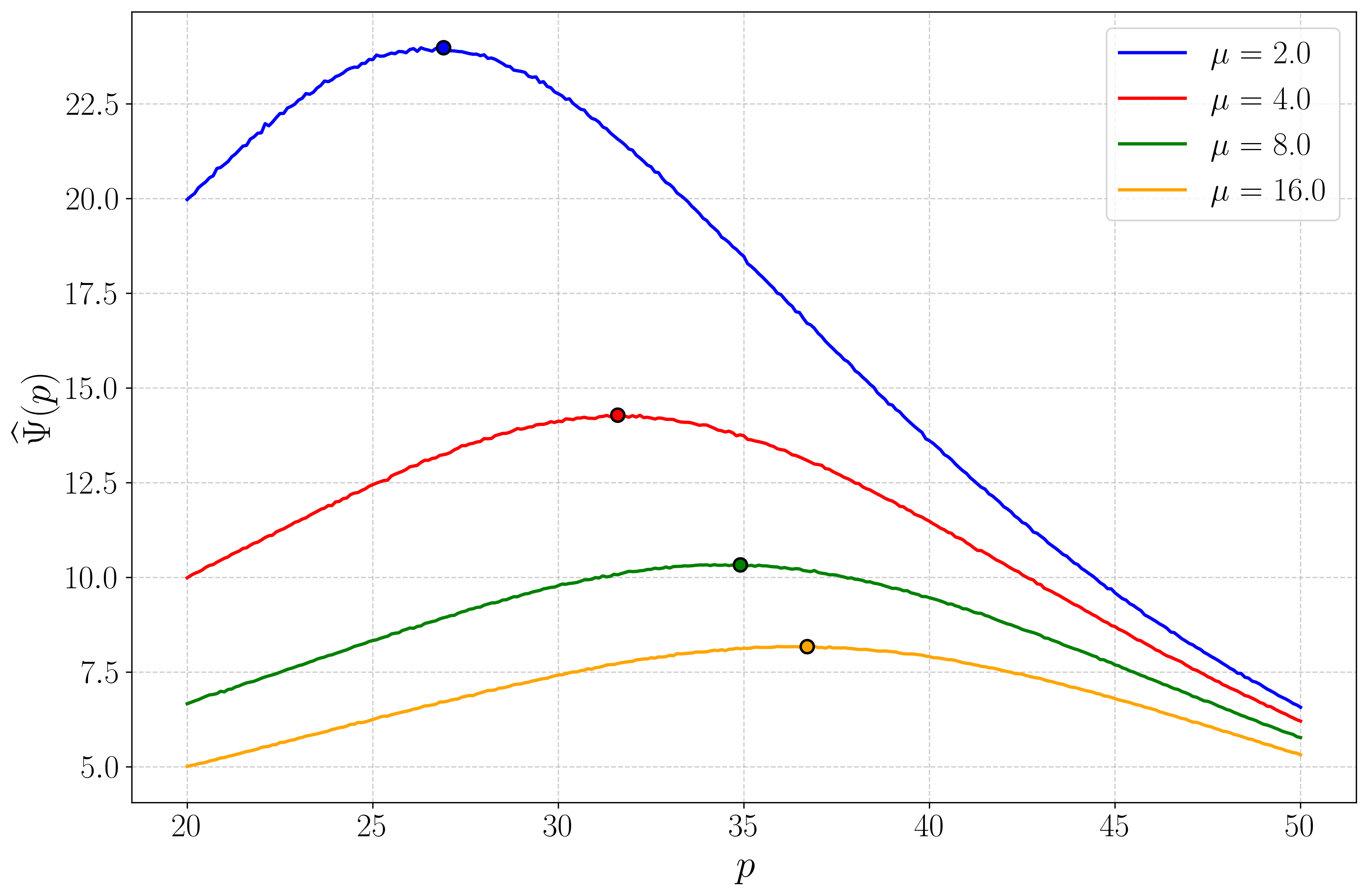}
        \caption{$\widehat{\Psi}(p)$ versus $p$ for $G(\cdot) \sim \text{Exp}\bigl(\frac{1}{\mu}\bigr)$ for various choices of $\mu$.}
        \label{fig:numerical_experiments:Exp}
    \end{subfigure}

    \vspace{0.35cm}

    \begin{subfigure}{0.48\textwidth}
        \centering
        \includegraphics[height=5cm]{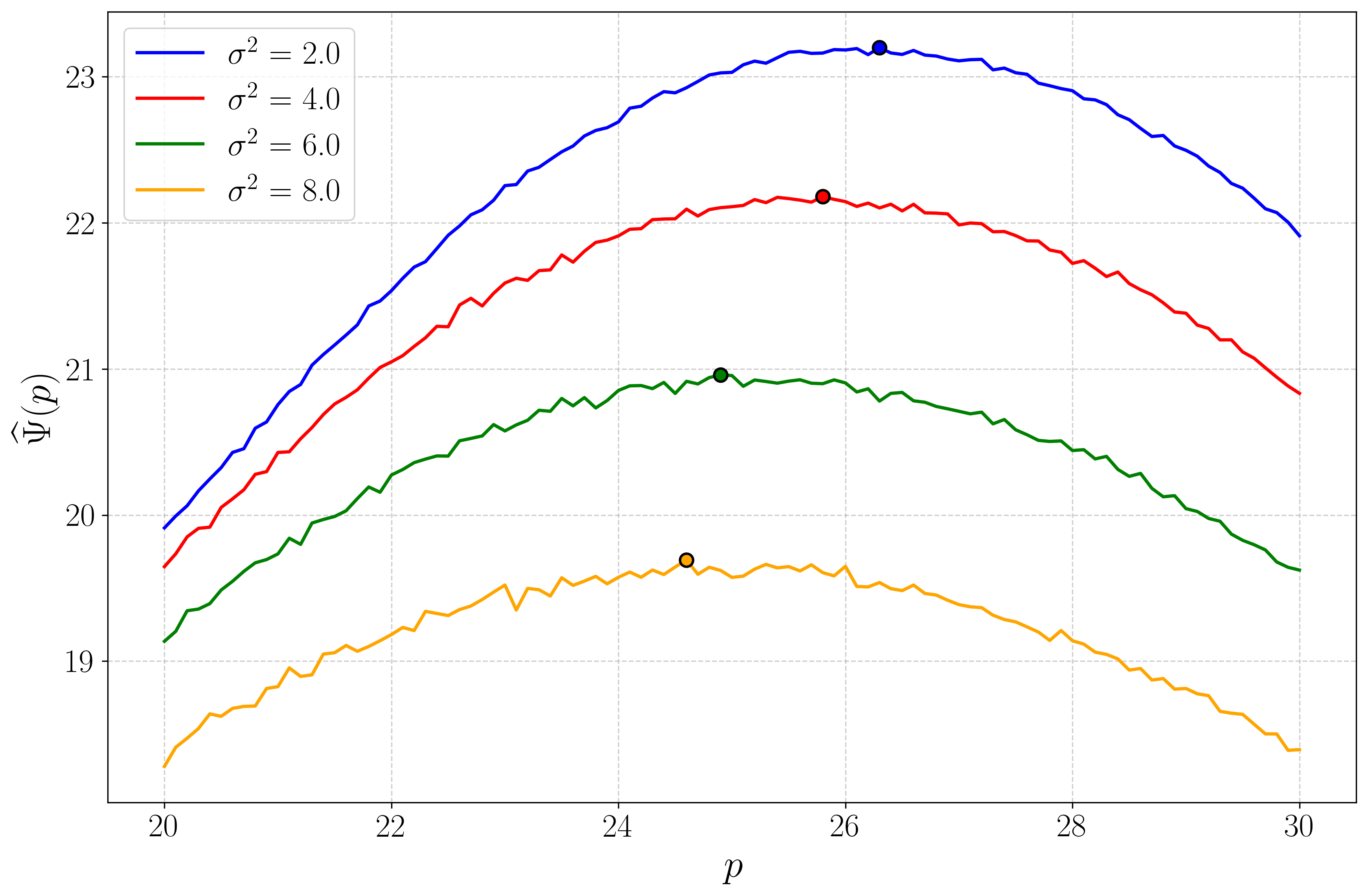}
        \caption{$\widehat{\Psi}(p)$ versus $p$ for $G(\cdot) \sim \text{Gamma}\big(\frac{1}{\sigma^2}, \frac{1}{\sigma^2}\big)$ for various choices of $n$.}
        \label{fig:numerical_experiments:Gamma:varying_variance}
    \end{subfigure}
    
    \caption{Impact of service-time distribution $G(\cdot)$ on the optimal price $p^*$}
    \label{fig:numerical_experiments:impact_of_service_time_distribution}
\end{figure}

\subsection{Convergence of price iterates} \label{subsection:numerical_experiments:convergence_rate}

In this subsection, we illustrate that the stochastic gradient descent algorithm presented in Section~\ref{sec:alg} reliably converges to the optimal price. 
Our goal in this subsection is to convince the reader that convergence is robust to choices of service-time distribution and the joining probability function.
Therefore, we choose 4 examples which have been created by a mix-match of 2 service-time distributions, namely Exponential and Gamma, and 2 joining probability functions corresponding to Examples~\ref{example: Psi and H} and \ref{example: poly_H}, namely $H(p, V) = \exp\bigl(-0.1 p - 0.2 V\bigr)$ and $H(p,V) = \bigl(1 + 0.1 p^2 + 0.2 V^2\bigr)^{-1}$.
These examples serve to demonstrate the practical effectiveness and stability of the proposed method in achieving optimality. 

Expressions analogous to those given in \eqref{eqn:F_p(l;w)}, \eqref{eqn:inverse:F_p(zeta;w)}, \eqref{eqn:price der_inverse_F_p(zeta;w)}, but then for the joining probability function $H(p, V) = \bigl(1 + \theta_1 p^2 + \theta_2 V^2\bigr)^{-1}$, are provided in Appendix \ref{appendix:formulae:alternative_joining_probability}.

{\small
\begin{table}[h!]
\centering
\renewcommand{\arraystretch}{1.2}
\begin{tabularx}{\linewidth}{|l|X|X|X|X|}
\hline
\textbf{Feature} 
    & \textbf{Example 1} 
    & \textbf{Example 2} 
    & \textbf{Example 3} 
    & \textbf{Example 4} \\
\hline
$\Lambda$ 
    & $20$ 
    & $20$ 
    & $20$ 
    & $20$ \\
$G(\cdot)$
    & $\mathrm{Exp}(2)$
    & $\Gamma(0.5,1/3)$ 
    & $\mathrm{Exp}(2/3)$ 
    & $\Gamma(0.5,1/3)$ \\ 
$H(p,V)$
    & $\bigl(1 + 0.1 p^2 + 0.2 V^2\bigr)^{-1}$
    & $\exp\!\bigl(-0.1 p - 0.2 V\bigr)$ 
    & $\exp\!\bigl(-0.1 p - 0.2 V\bigr)$ 
    & $\bigl(1 + 0.1 p^2 + 0.2 V^2\bigr)^{-1}$ \\ 
Grid 
    & $\{0,0.1,\ldots,50\}$ 
    & $\{20,20.1,\ldots,40\}$ 
    & $\{20,20.1,\ldots,40\}$ 
    & $\{10,10.1,\ldots,25\}$ \\ 
$N_{\text{eff}}$
    & $5 \times 10^5$
    & $5 \times 10^5$
    & $5 \times 10^5$
    & $5 \times 10^5$ \\
$p^*$
    & $\approx 9.3$
    & $\approx 29.0$
    & $\approx 29.5$
    & $\approx 16.5$ \\
$\Psi(p^*)$
    & $\approx 16.8$
    & $\approx 17.2$
    & $\approx 17.8$
    & $\approx 9.4$ \\
$T^*_k$
    & $50\log(k+1)$
    & $50\log(k+1)$
    & $50\log(k+1)$
    & $50\log(k+1)$ \\
$\eta_k$
    & $20/k^{0.75}$
    & $20/k^{0.75}$
    & $20/k^{0.75}$
    & $20/k^{0.75}$ \\
\hline
\end{tabularx}
\caption{Model specifications for Examples~1--4.}
\label{table:examples}
\end{table}
}

{Since, to the best of our knowledge, no closed-form expression for $p^*$ is available, we approximate it through long simulation runs, following the procedure described in Section~\ref{subsection:numerical_experiments:convergence_rate}.}

Table \ref{table:examples} summarizes the model parameters used for the examples.
The feature ``Grid" in this table refers to the price grid over which the system is simulated to approximate $\Psi(p)$, and $N_{\text{eff}}$ denotes the number of effective arrivals used to obtain this approximation.
$T^*_k$ the window size and $\eta_k$ the learning rate are the hyperparameters of the algorithm, which are kept the same for all examples.

Figures \ref{fig:SGD:example_1:known_parameters}--\ref{fig:SGD:example_4:known_parameters} illustrate the behavior of our learning algorithm.
The subfigures on the left display the empirical average revenue per unit time for the price grid given in Table \ref{table:examples}.
The subfigures on the right show the algorithm’s performance. 
The horizontal dashed red lines indicate the optimal price $p^*$ for the corresponding instance which the algorithm is expected to converge to. 
As the plots demonstrate, the stochastic gradient descent iterates for the price converge to the empirically determined optimal value, confirming that the algorithm reliably identifies the optimal pricing level.

\begin{remark}
    Certain choices of the service time distribution $G(\cdot)$ and the joining probability $H\big(p,V\big)$ give rise to plots for $\Psi(p)$ which plateau around $p^*$. 
    In such cases, the algorithm naturally has more difficulty converging precisely to $p^*$.
    Example 4 illustrates this phenomenon. Figure \ref{fig:example_4:psi_plot_zoomed} shows $\widehat{\Psi}(p)$ in a neighborhood of $p^*$, where the objective function is visibly flat. Consequently, as seen in Figure \ref{fig:example_4:price_iterates}, the algorithm does not converge fully to $p^*$ even after 500 iterations, in contrast to the other examples in which convergence is much faster.  However, it is important to note that, from a practical standpoint, this behavior is not problematic: the service operator’s primary goal is to maximize revenue. In that respect, the algorithm performs well, quickly moving into the vicinity of the optimal price, where the resulting revenue is essentially optimal. \hfill $\spadesuit$
\end{remark}

\begin{figure}[t!]
    \centering

    \begin{subfigure}{0.48\textwidth}
        \centering
        \includegraphics[height=4.5cm]{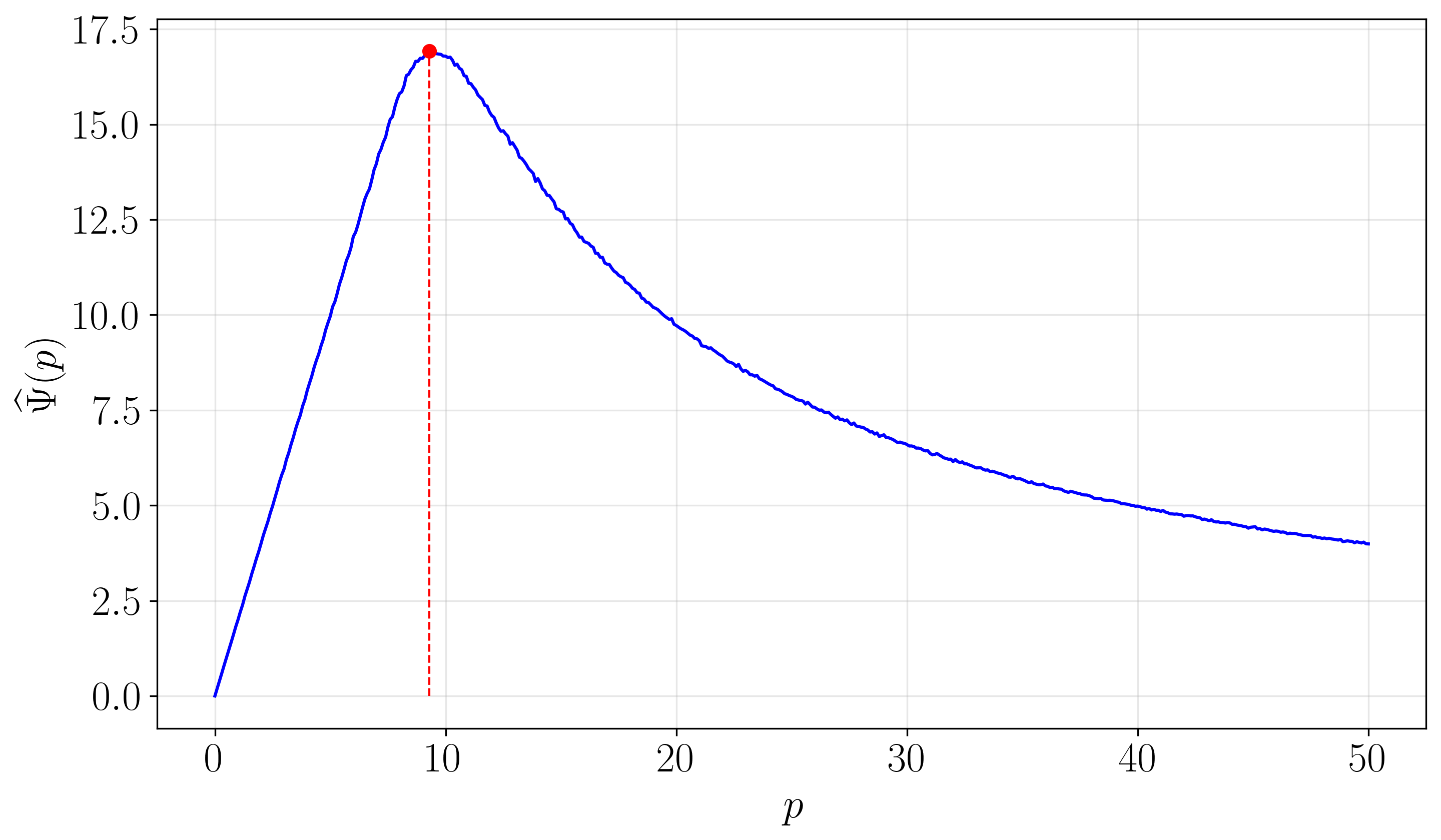}
        \caption{$\widehat{\Psi}(p)$ versus $p$.}
        \label{fig:example_3:psi_plot}
    \end{subfigure}
    \hspace{0.01\textwidth}
    \begin{subfigure}{0.48\textwidth}
        \centering
        \includegraphics[height=4.5cm]{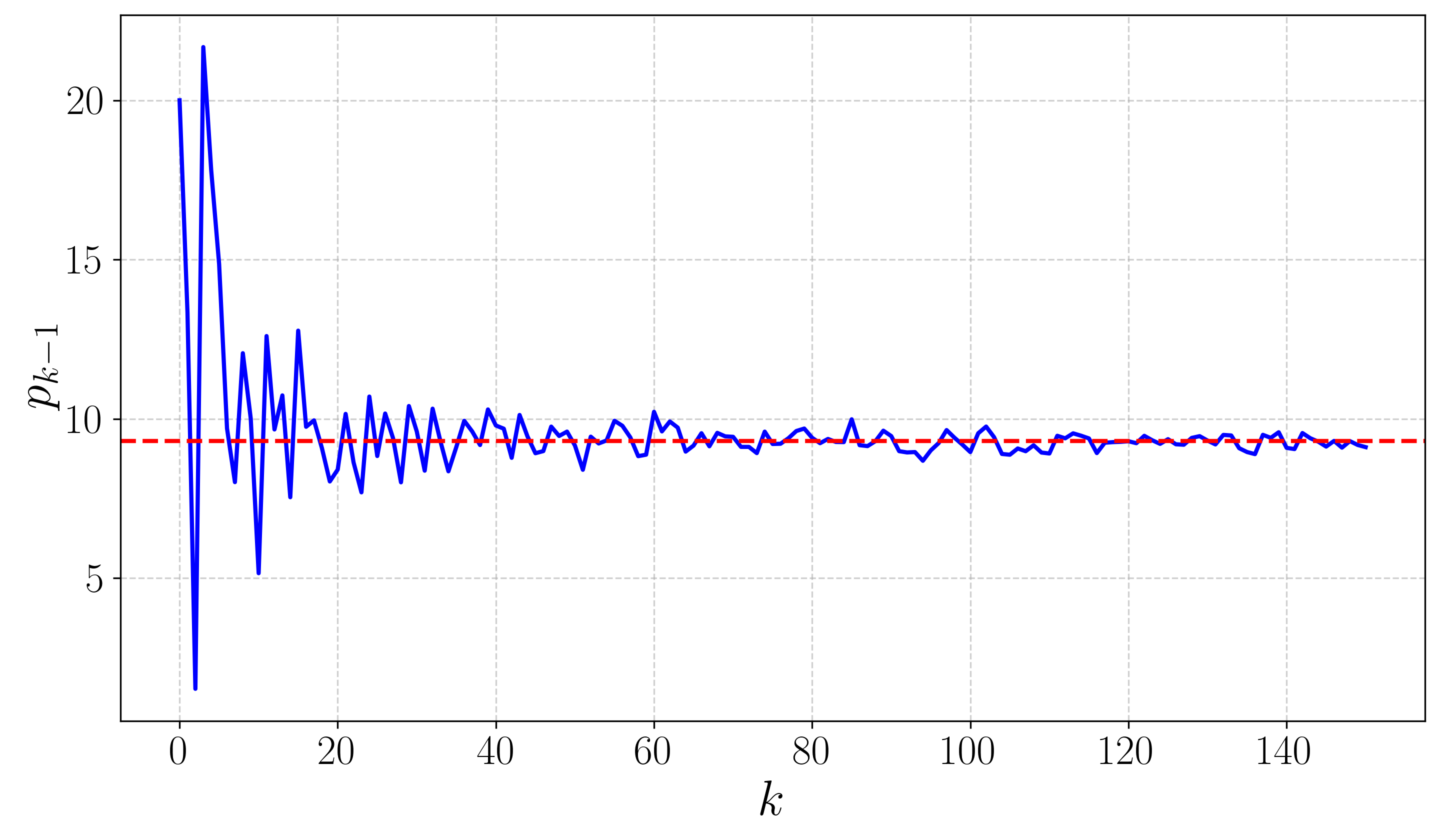}
        \caption{$p_{k-1}$ vs.\ $k$ for $L = 150$ iterations, $p_0 = 20$.}
        \label{fig:example_3:price_iterates}
    \end{subfigure}

    \caption{SGD-based learning algorithm in action for Example 1.}
    \label{fig:SGD:example_3:known_parameters}
\end{figure}

\begin{figure}[t!]
    \centering

    \begin{subfigure}{0.48\textwidth}
        \centering
        \includegraphics[height=4.5cm]{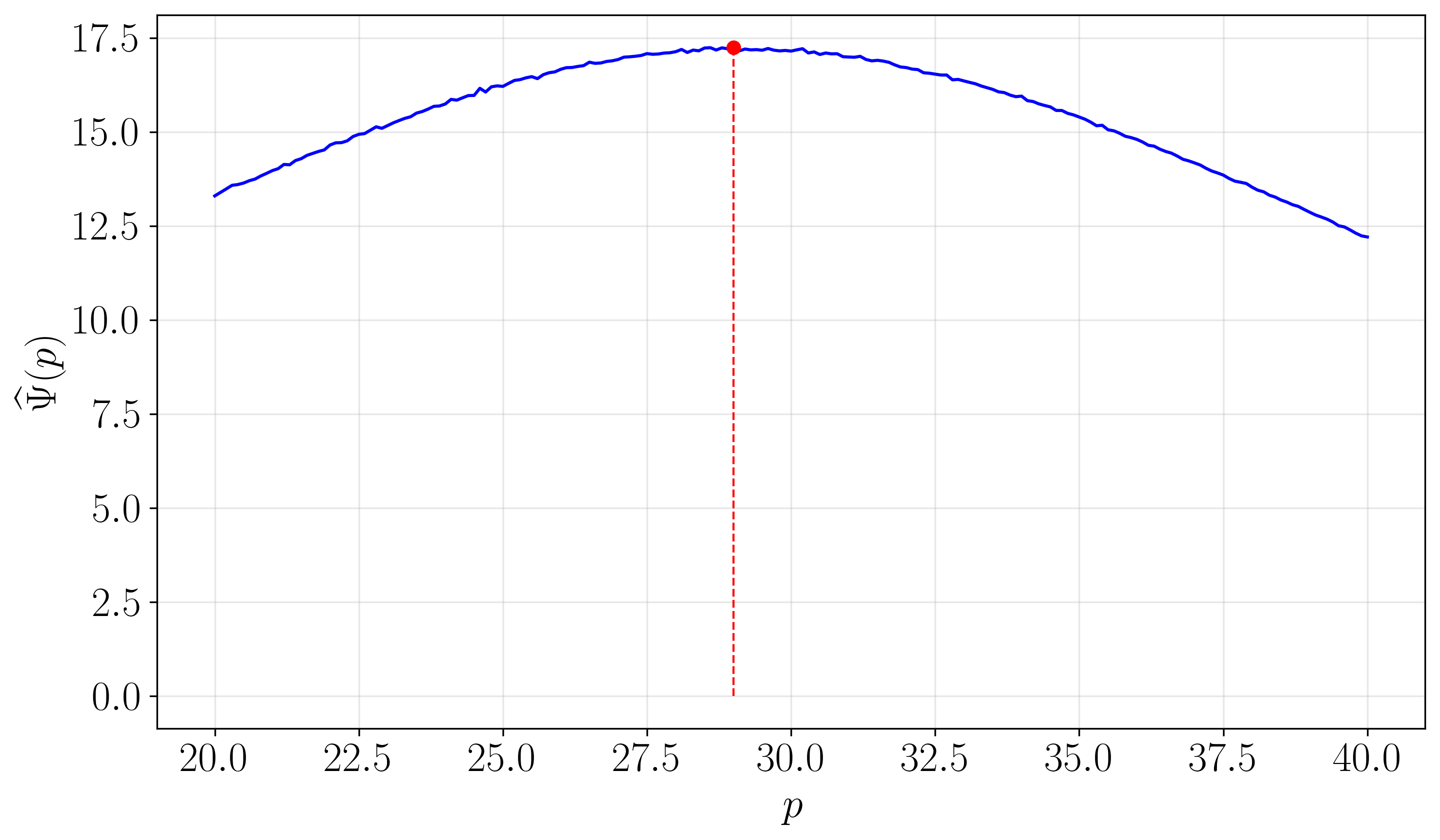}
        \caption{$\widehat{\Psi}(p)$ versus $p$.}
        \label{fig:example_2:psi_plot}
    \end{subfigure}
    \hspace{0.01\textwidth} 
    \begin{subfigure}{0.48\textwidth}
        \centering
        \includegraphics[height=4.5cm]{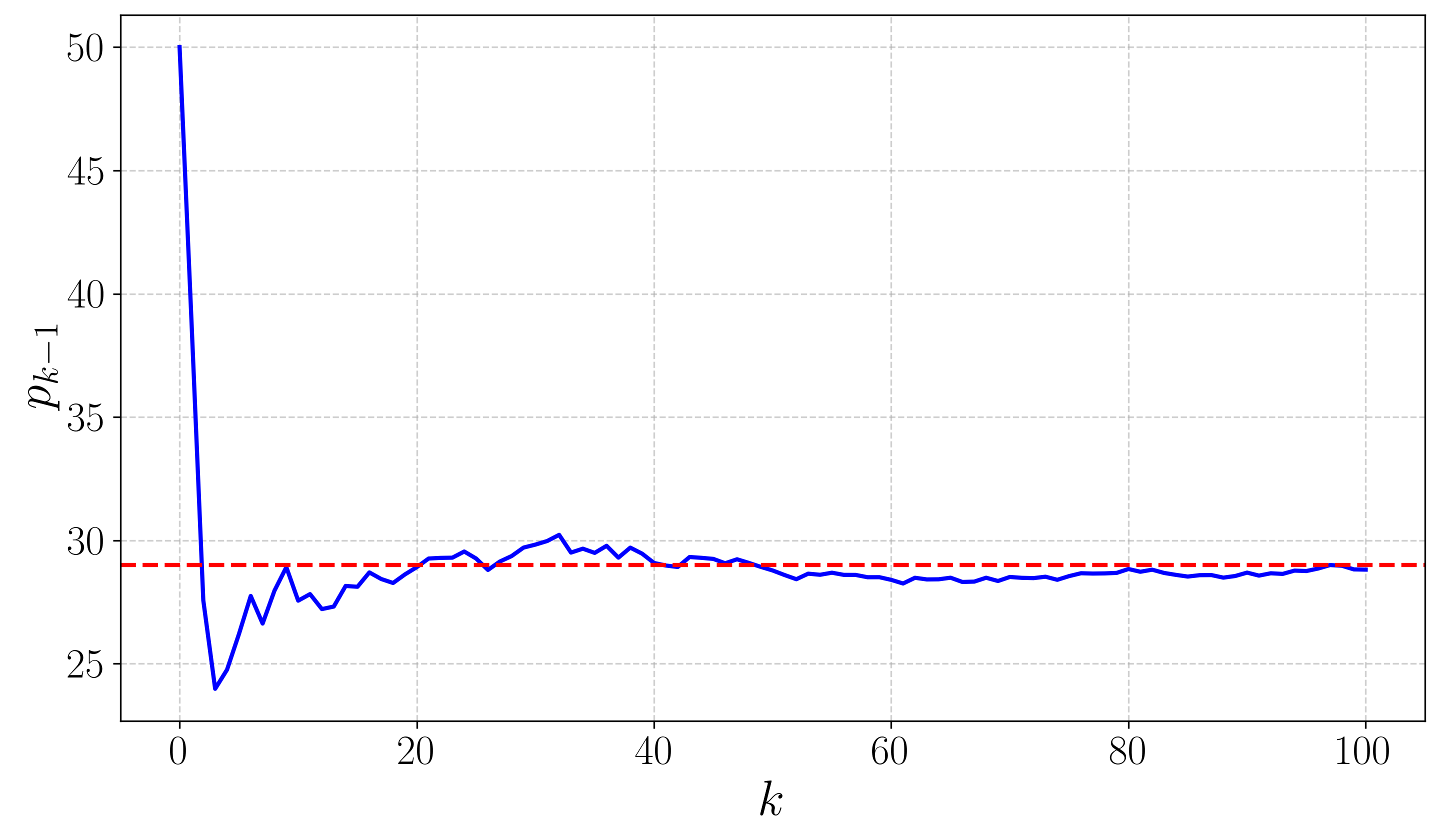}
        \caption{$p_{k-1}$ vs.\ $k$ for $L = 100$ iterations, $p_0 = 50$.}
        \label{fig:example_2:price_iterates}
    \end{subfigure}

    \caption{SGD-based learning algorithm in action for Example 2.}
    \label{fig:SGD:example_2:known_parameters}
\end{figure}

\begin{figure}[t!]
    \centering
    \begin{subfigure}{0.48\textwidth}
        \centering
        \includegraphics[height=4.5cm]{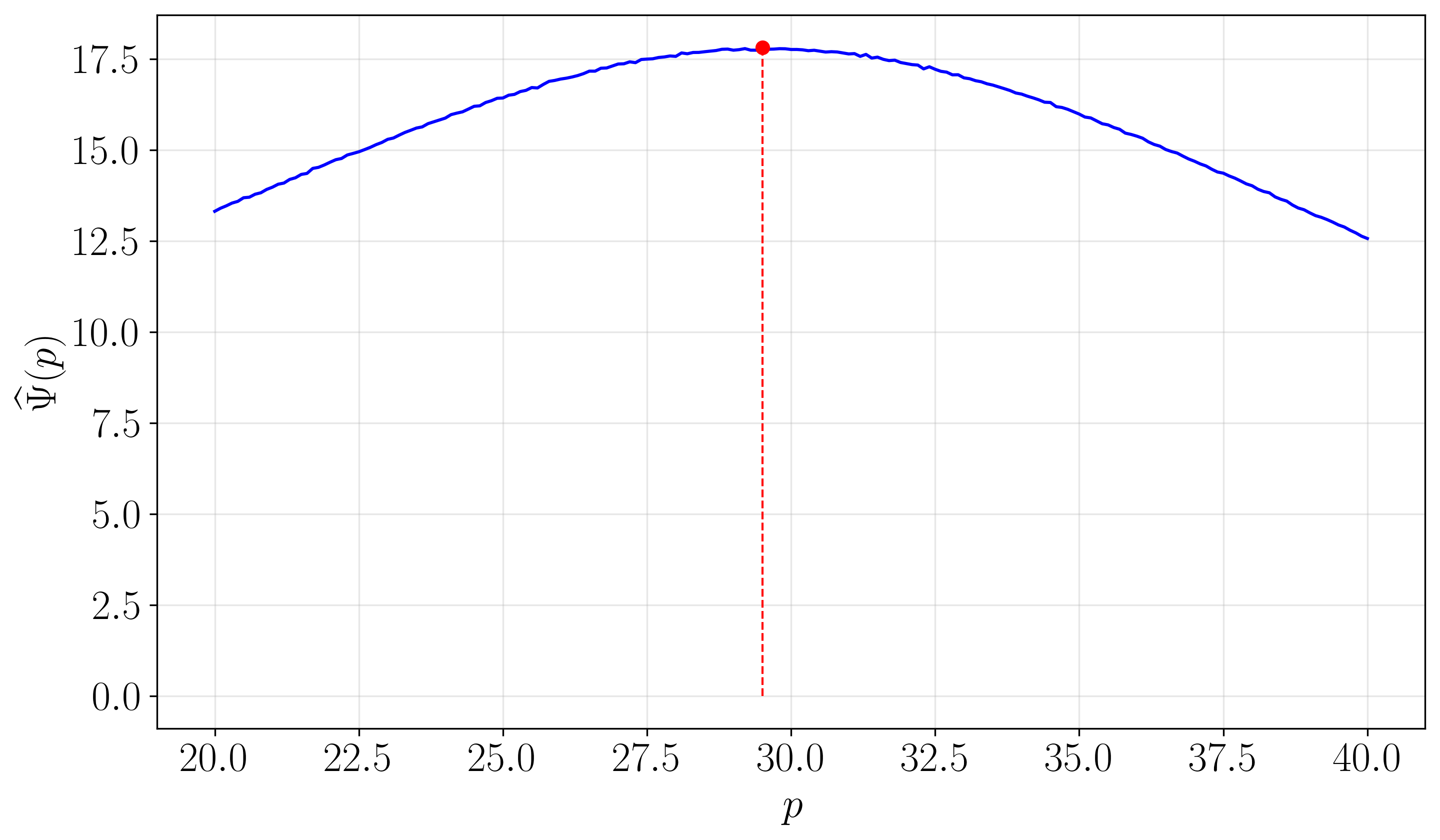}
        \caption{$\widehat{\Psi}(p)$ versus $p$.}
        \label{fig:example_1:psi_plot}
    \end{subfigure}
    \hspace{0.01\textwidth}  
    \begin{subfigure}{0.48\textwidth}
        \centering
        \includegraphics[height=4.5cm]{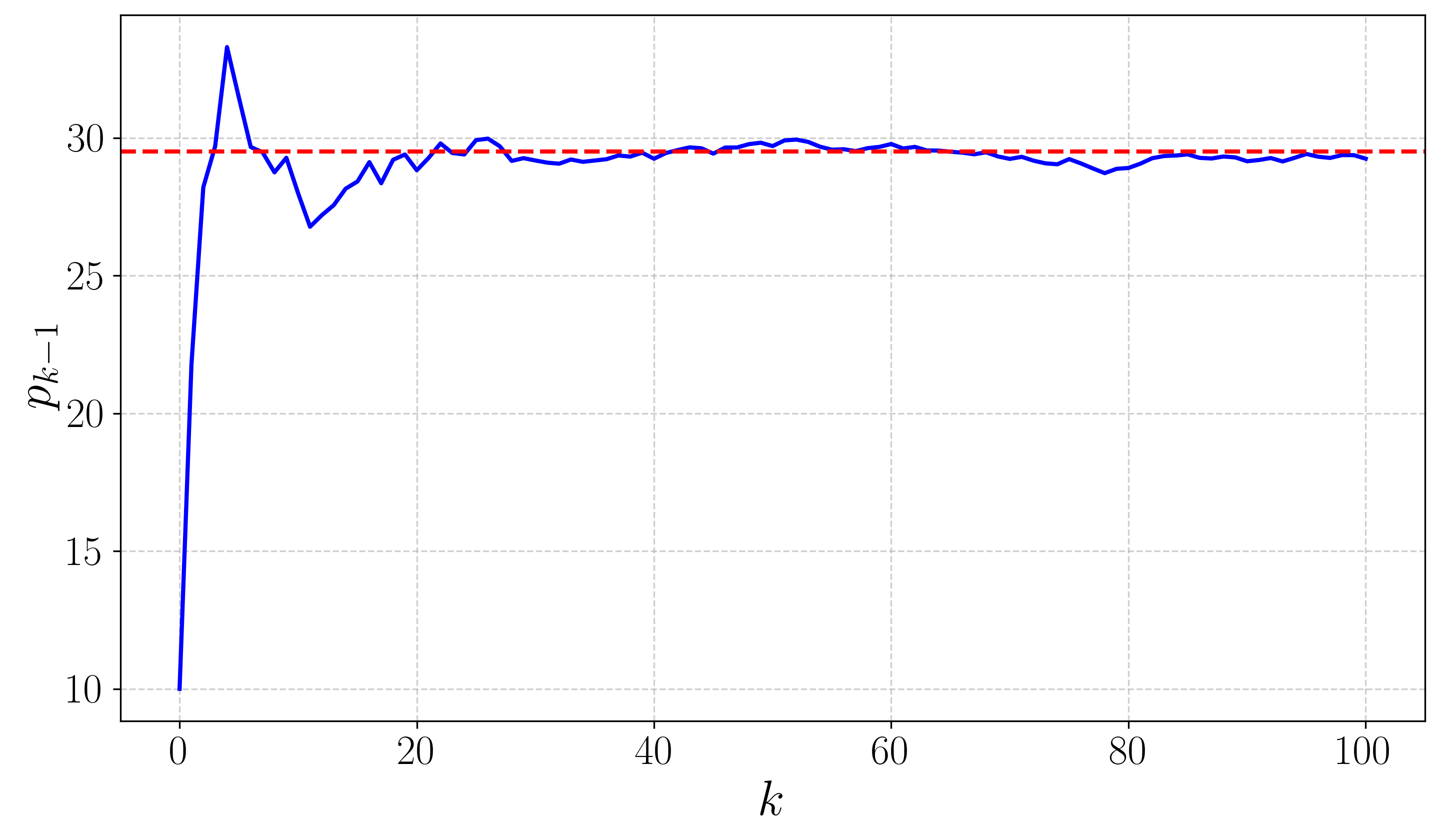}
        \caption{$p_{k-1}$ vs.\ $k$ for $L = 100$ iterations, $p_0 = 10$.}
        \label{fig:example_1:price_iterates}
    \end{subfigure}
    \caption{SGD-based learning algorithm in action for Example 3.}
    \label{fig:SGD:example_1:known_parameters}
\end{figure}

\begin{figure}[t!]
    \centering
    \begin{subfigure}{0.48\textwidth}
        \centering
        \includegraphics[height=4.5cm]{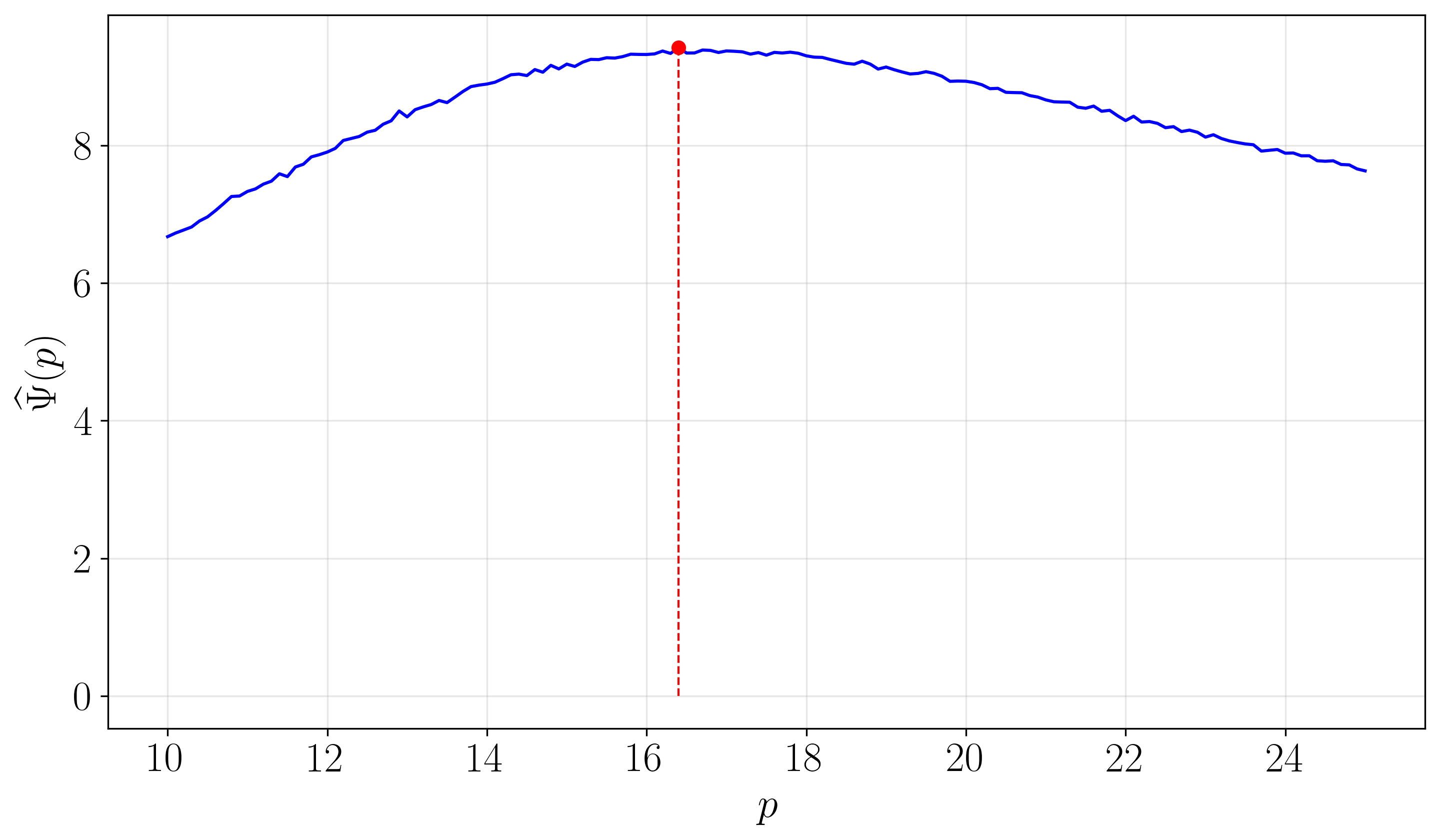}
        \caption{$\widehat{\Psi}(p)$ versus $p$.}
        \label{fig:example_4:psi_plot}
    \end{subfigure}
    \hspace{0.01\textwidth}
    \begin{subfigure}{0.48\textwidth}
        \centering
        \includegraphics[height=4.5cm]{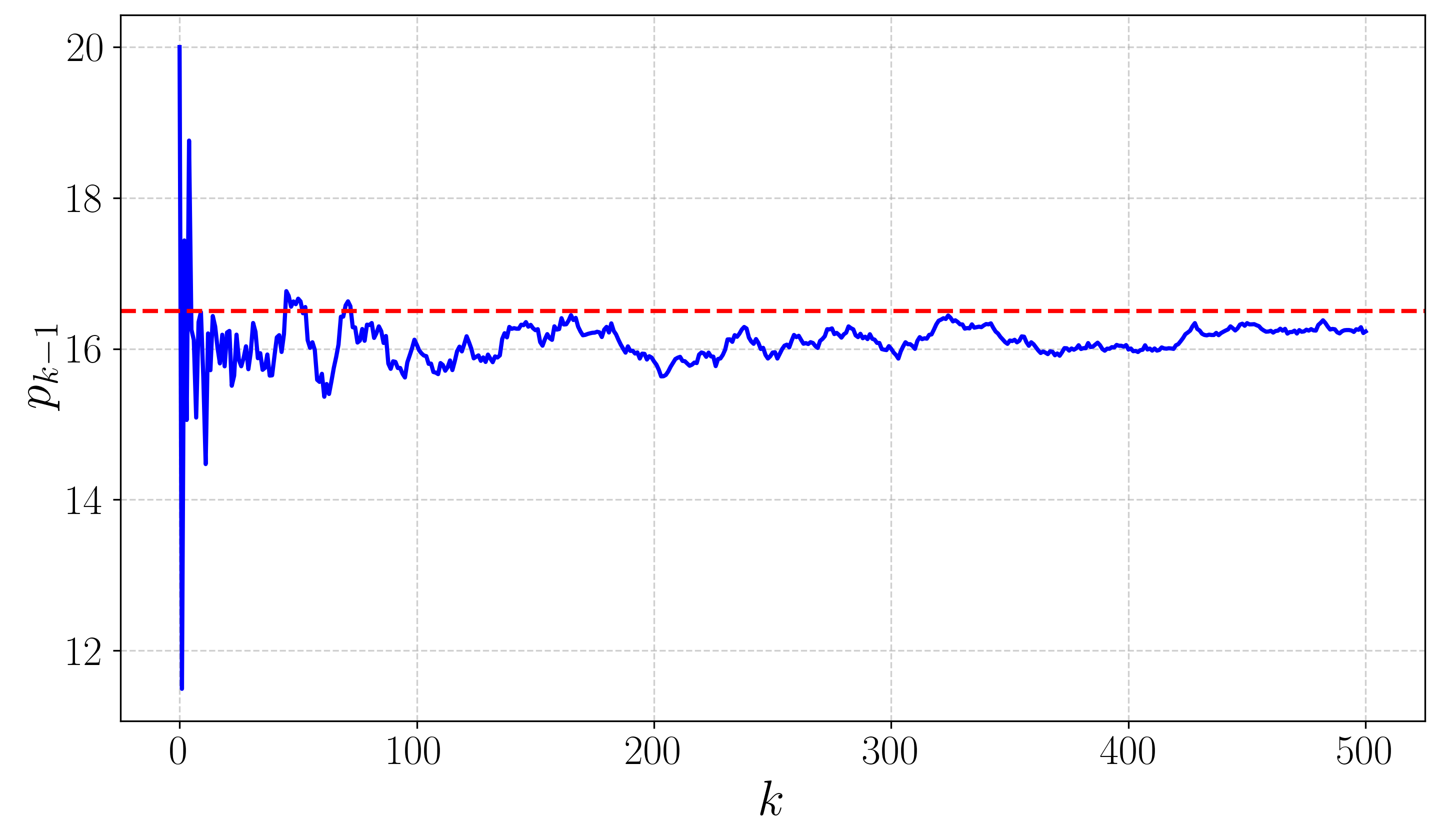}
        \caption{$p_{k-1}$ vs.\ $k$ for $L = 500$ iterations, $p_0 = 20$.}
        \label{fig:example_4:price_iterates}
    \end{subfigure}

    \vspace{0.35cm}
    
    \begin{subfigure}{0.48\textwidth}
        \centering
        \includegraphics[height=5cm]{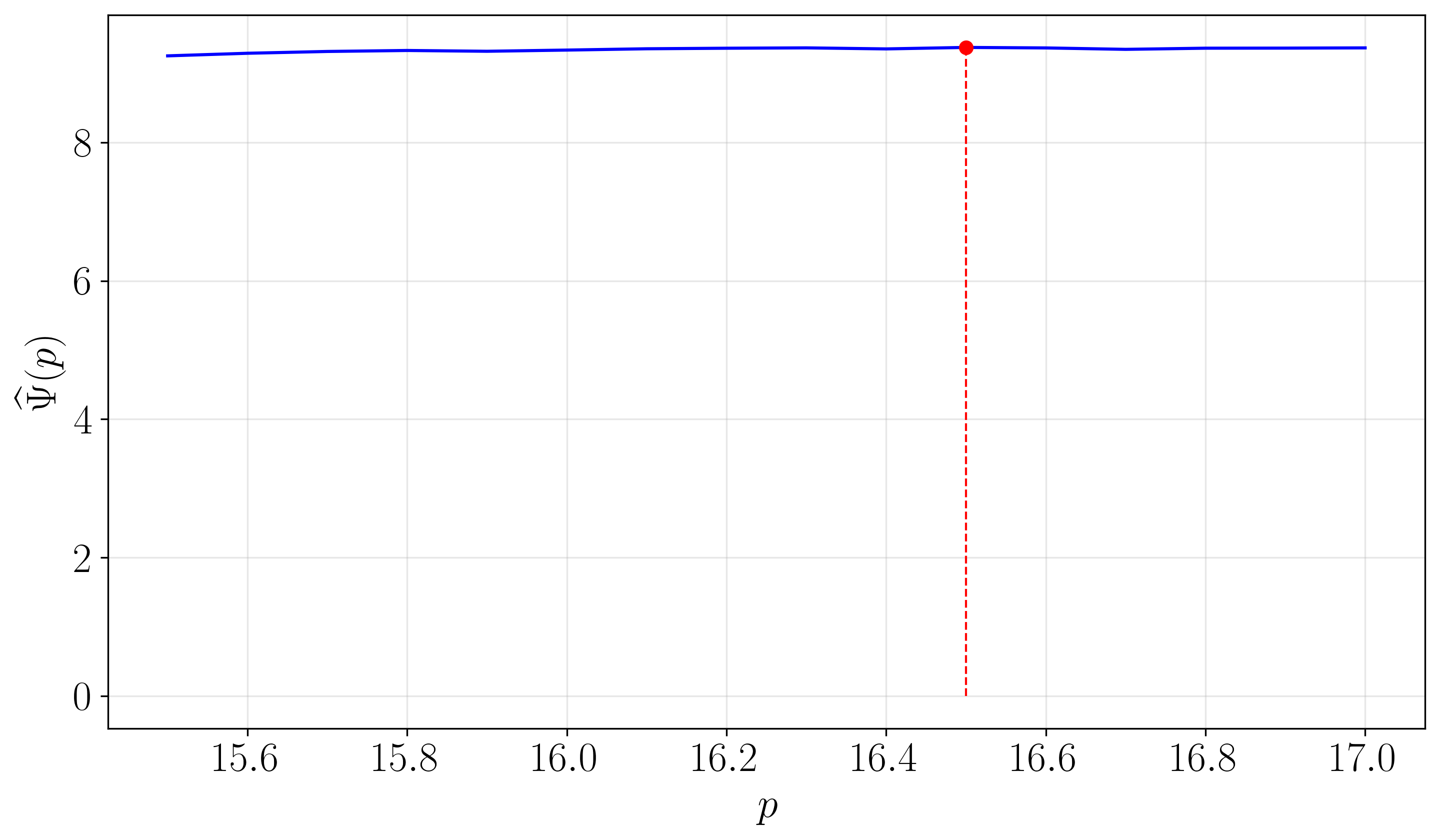}
        \caption{$\widehat{\Psi}(p)$ versus $p$ in the neighbourhood of $p^*$.}
        \label{fig:example_4:psi_plot_zoomed}
    \end{subfigure}

    \caption{SGD-based learning algorithm in action for Example 4.}
    \label{fig:SGD:example_4:known_parameters}
\end{figure}

\subsection{Convergence time comparison}\label{subsection:numerical_experiments:window_size}

In the previous subsection, we verified that the stochastic gradient descent algorithm converges in practice. {We in particular assessed the impact of the distribution of the service times and the joining probability, for fixed window sizes $T_k^*$ and learning rate $\eta_k$.}
A natural next question is: how does the choice of window size affect the convergence rate of the price iterates? 
Specifically, given a fixed time budget $T_{\max}$ for learning the optimal price, what window sizes should be chosen to most effectively and accurately identify it?
This question involves a trade-off.
Smaller windows allow for more iterations of gradient descent within the allotted time, potentially accelerating convergence. 
However, larger windows produce more accurate gradient estimates, which can improve stability and precision. 
Understanding this trade-off is key to optimizing the algorithm’s performance.
In this subsection, we investigate this question through numerical experiments in which we vary $T_k^*$, keeping the service-requirement distribution and the joining probability fixed as in Example 1 from Section~\ref{subsection:numerical_experiments:convergence_rate}.

\begin{itemize}
    \item[$\circ$]
    {\it  Varying window-sizes up to constants.} 
    We begin by varying the time–window sizes up to constant factors. Specifically, we examine the evolution of the price iterates under two growth rates, namely $T^{*}_k = C\sqrt{k}$ and $T^{*}_k = C\log(k+1)$, across a range of values for the constant $C$. The results for both sets of experiments are presented in Figure~\ref{fig:example_5:changing_time_window_const_rate}.

    We first consider in Figure~\ref{fig:example_5:sqrt:price_convergence} the choice $T^{*}_k = C\sqrt{k}$. In Section~\ref{subsection:numerical_experiments:convergence_rate}, our attention was on plots of the price iterates as functions of the iteration number. To better understand how quickly the iterates converge to $p^*$ in \emph{time}, Figure~\ref{fig:example_5:sqrt:price_convergence} presents the price iterates against the actual time. The {top panel of the} figure displays the trajectories produced by the window choices $T^{*}_k = 10\sqrt{k}$, $T^{*}_k = 50\sqrt{k}$, and $T^{*}_k = 100\sqrt{k}$, with the colored bands indicating the dispersion of the corresponding iterates. A wider band indicates more fluctuation, while a narrower band indicates stability.
    {The bottom panel of Figure~\ref{fig:example_5:sqrt:price_convergence} shows} the evolution of the error $\lvert p_{k-1} - p^* \rvert$ as a function of time. From these results, it is apparent that $C=10$ (blue curve) yields the fastest convergence, followed by the $C=50$ (red curve) and $C=100$ (green curve) converging the slowest. 
    
    {Figure~\ref{fig:example_5:log:price_convergence}  reveals that similar conclusions on the speed of convergence can be drawn for the choice $T^{*}_k = C \log(k+1)$ with $C\in\{10,50,100\}$.}

\begin{figure}[t!]
    \centering
    \begin{subfigure}{0.45\textwidth}
        \centering
        \includegraphics[height=6.8cm]{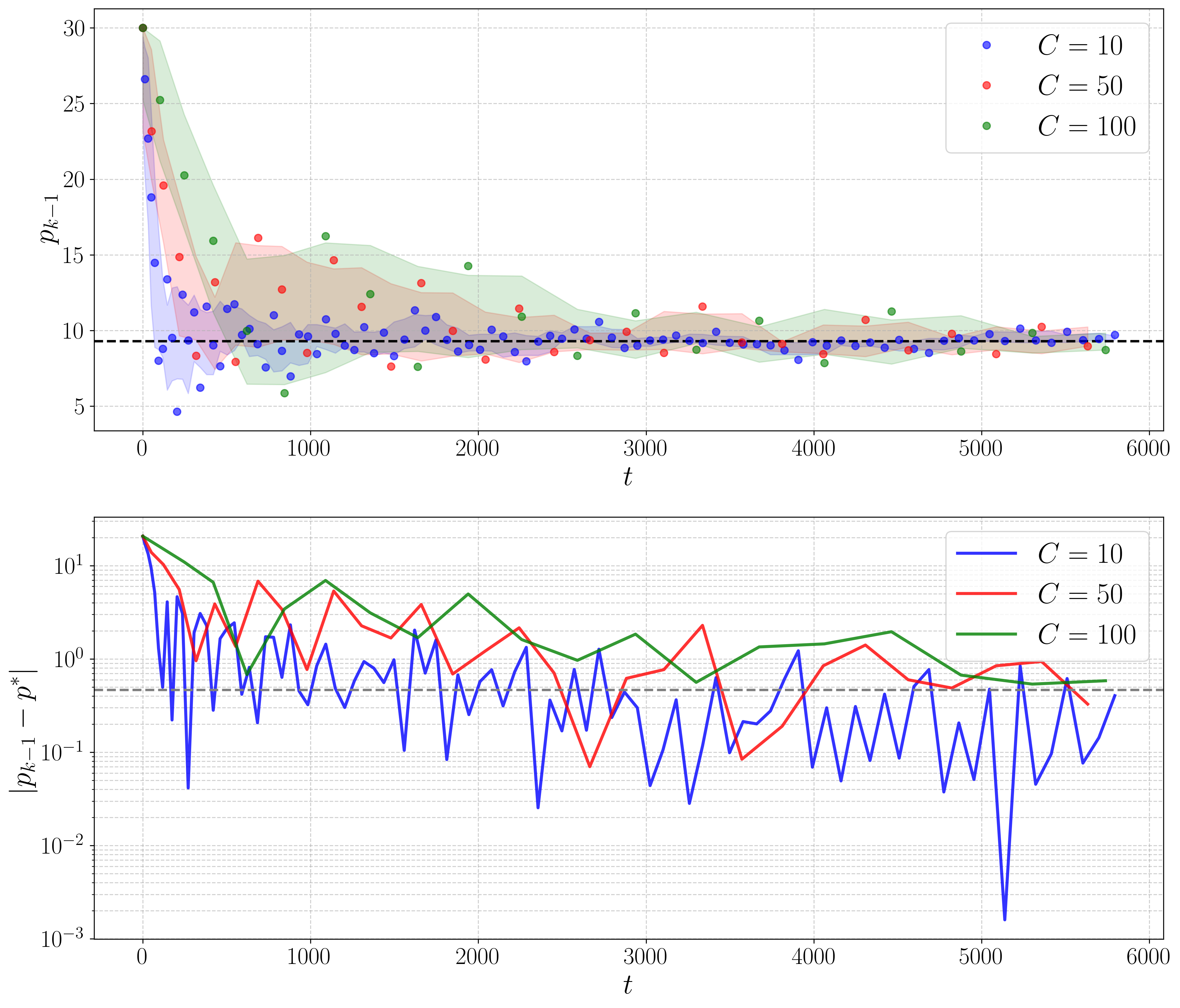}
        \caption{Price iterates versus simulation time for $T^*_k = C\protect\sqrt{k}$, $p_0 = 30$}
        \label{fig:example_5:sqrt:price_convergence}
    \end{subfigure}
    \hspace{0.01\textwidth}
    \begin{subfigure}{0.45\textwidth}
        \centering
        \includegraphics[height=6.8cm]{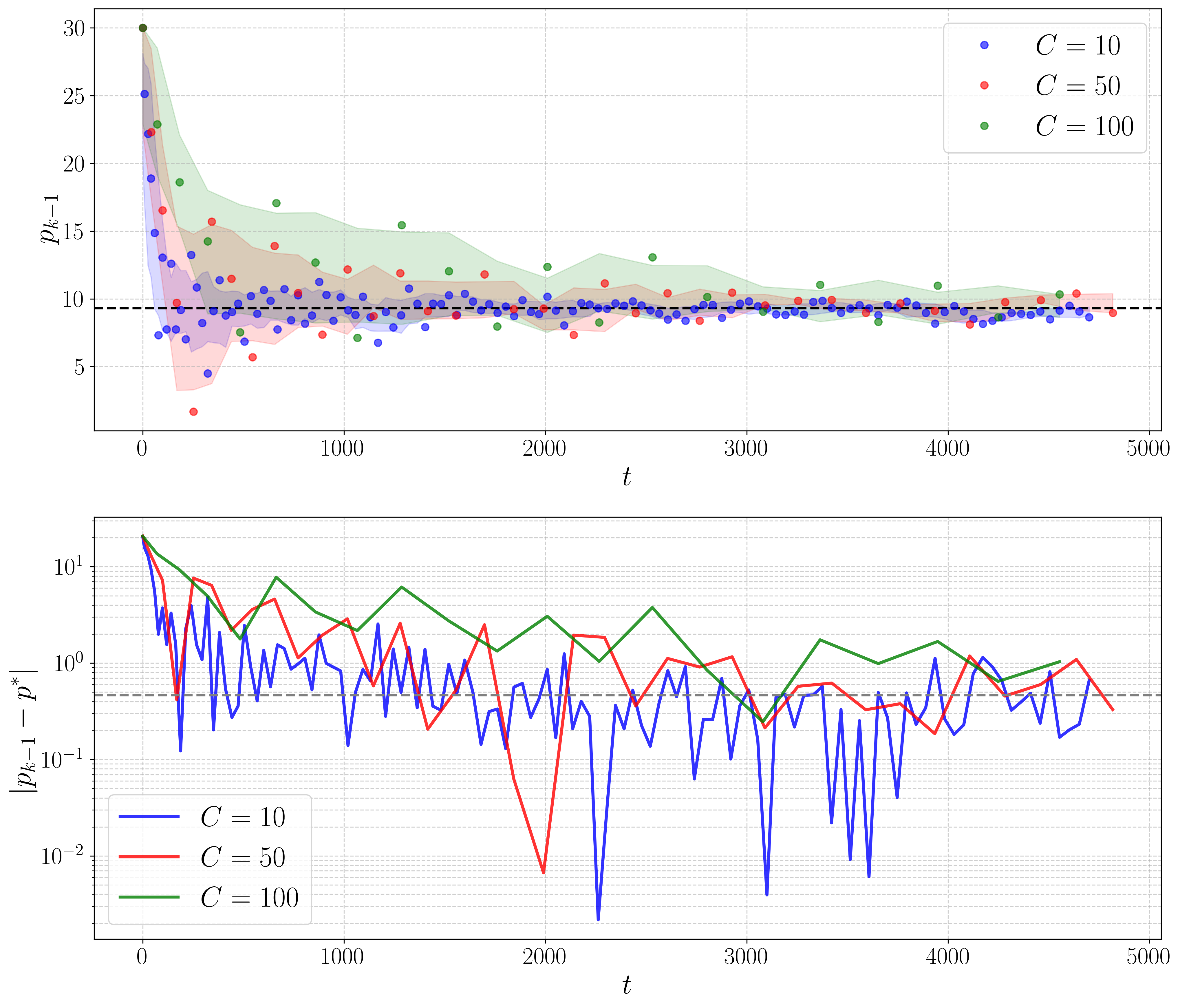}
        \caption{Price iterates versus simulation time for $T^*_k = C\protect\log(k+1)$, $p_0 = 30$}
        \label{fig:example_5:log:price_convergence}
    \end{subfigure}

    \caption{Impact of window sizes on price convergence rates in Example 1.}
    \label{fig:example_5:changing_time_window_const_rate}
\end{figure}

\item[$\circ$]{\it Varying window size rates.} 
Next, we vary the {\it rate} at which the window sizes grow by studying the behavior of the price iterates over time for window functions of the form $T^{*}_k = C\,k^\alpha$, considering multiple values of the exponent $\alpha$. The corresponding plots are shown in Figure~\ref{fig:example_5:changing time window:variable rate}.
Once again, similar to the observation in Figure \ref{fig:example_5:changing time window:variable rate}, we observe that $\alpha = 0.1$ (blue curve) yields the fastest convergence followed by $\alpha = 0.3$ (red curve) and $\alpha = 0.5$ (green curve) being the slowest.
\end{itemize}

The above two experiments highlight an inherent trade-off: increasing $C$ and $\alpha $ leads to longer time windows, which provide more data per iteration and thus yield more accurate gradient estimates. However, this comes at the cost of fewer updates within a fixed time budget. From Figures~\ref{fig:example_5:changing_time_window_const_rate}--\ref{fig:example_5:changing time window:variable rate}, it becomes evident that  smaller values of $C$ and $\alpha$ — i.e., slower growth of the window size — lead to faster convergence of the price iterates to the optimal price.

\begin{figure}
    \centering
    \includegraphics[width=0.6\linewidth]{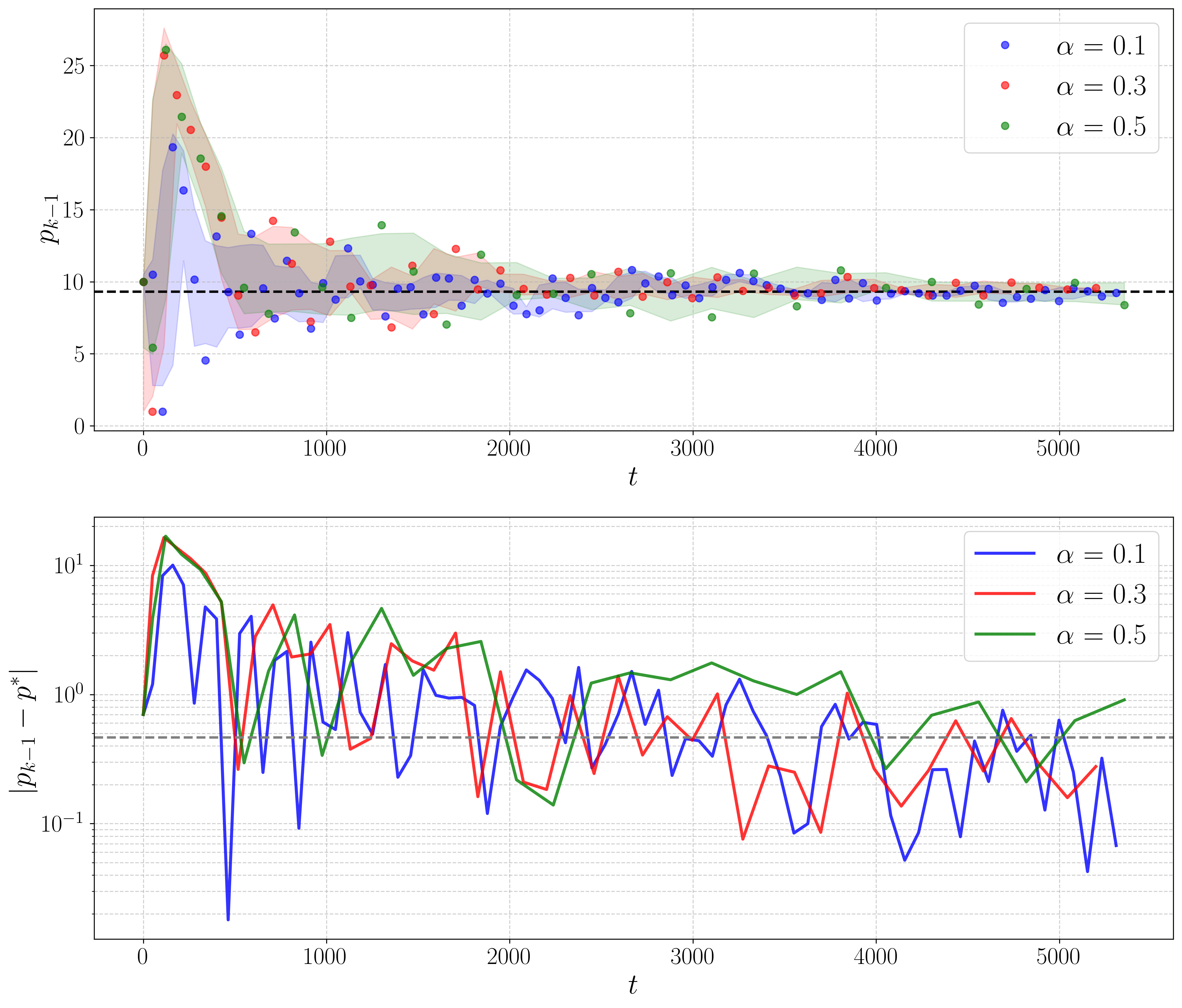}
    \caption{Price iterates versus simulation time for $T^*_k = 50 \,k^{\alpha}$, $p_0 = 10$.}
    \label{fig:example_5:changing time window:variable rate}
\end{figure}

\section{Concluding remarks}

In this paper, we analyzed a dynamic pricing problem in a single-server queueing system, where the objective was to maximize the expected revenue per unit time. Customers were allowed to balk in response to high congestion, which introduced a non-trivial interaction between the pricing policy and the effective arrival process. Importantly, the service provider had access only to data from effective arrivals — those customers who chose to join the queue — making the system partially observable and inherently state-dependent.

To address this challenge, we developed a stochastic gradient descent algorithm that dynamically adjusted the price based solely on observable information. A key contribution of our work was the design of a novel IPA-based procedure, which consistently estimated the stationary effective arrival rate. This estimator was then used to construct an iterative learning algorithm that, under mild regularity conditions, was shown to converge asymptotically to the optimal price. Numerical experiments further confirmed the effectiveness and robustness of the proposed algorithm.

\medskip

While our work makes several significant contributions, there remain multiple avenues for future research. We highlight a few key directions here:
\begin{itemize}
    \item[$\circ$]
    A natural next step is to extend the analysis to multi-server queueing systems. This poses significant challenges, as the coupling arguments developed in Section~\ref{sec:queue_coupling} for queues with different initial workloads do not straightforwardly generalize to multi-server settings (cf. the observations in \cite{BodasJacobovic2024}). Substantial adaptations of the theoretical framework would therefore be required.
 \item[$\circ$]Another important direction is to consider scenarios where both the arrival rate and the customers’ disutility threshold distribution are unknown. If the disutility threshold distribution is parametric with unknown parameters, existing estimation procedures, such as those developed in \cite{ BodasMandjesRavner2024,InoueRavnerMandjes2023}, could be incorporated. However, this would introduce an additional layer of complexity, as the algorithm would need to update not only the price but also estimates of the arrival rate and patience distribution at each iteration to compute the gradient estimator accurately. In cases where the disutility threshold distribution is non-parametric, this problem naturally invites approaches from reinforcement learning.
 \item[$\circ$] Assumption \ref{assumption:Psi} requires that $\Psi(\cdot)$ is a strongly concave function. An open problem of theoretical interest is whether Theorem \ref{thm:price_convergence_rate} and thereby Theorem \ref{thm:regret:upper_bound}, or some weaker versions of them, can also be established when $\Psi(\cdot)$ is not strongly concave.
\end{itemize}


\bibliographystyle{plain}

\newpage

\appendix

\begin{center}
    \Large{{\textbf{--- APPENDICES ---}}}
\end{center}

\section{Proofs} \label{appendix:proofs}

\subsection{Proof of Proposition \ref{propn:formula:Psi}} \label{appendix:proof:propn:formula:Psi}

\begin{proof}[Proof of Proposition~\ref{propn:formula:Psi}]
     Let $N(\cdot)$ be the counting process corresponding to effective arrivals. Equation \eqref{eqn:formula:Psi:1} follows because
    \begin{align*}
        \Psi(p) = \lim_{T \rightarrow \infty} \frac{p \ N(T)}{T} = p \lim_{T \rightarrow \infty} \frac{N(T)}{T} = p \ \Lambda \exptn\big[H\left(p, W_{\infty}(p)\right)\big].
    \end{align*}
    Recall that for $i \geq 0$, the queue dynamics are dictated by the following recursions:
    \begin{align*}
        \overline{W}_{i} = \underline{W}_{i} + S_i, \ A_{i+1} = F^{-1}_p\left(\zeta_{i+1}; \overline{W}_i\right), \ \underline{W}_{i+1} = \left(\overline{W}_i - A_{i+1}\right)^{+}.
    \end{align*}
     {Since the Markov chain $\big\{\overline{W}_i\big\}_{i \geq 1}$ is positive recurrent, and $A_{i+1}$ is a measurable function of $\big(\overline{W}_i, \zeta_{i+1}\big)$, it follows that $\big(\overline{W}_i, A_{i+1}\big)$ is an ergodic sequence.}
    Hence, the ergodic theorem entails that
    \begin{align*}
        \lim_{T \rightarrow \infty} \frac{\sum_{i=1}^{N(T)} A_i}{N(T)} = \exptn\left[A_{\infty}(p)\right].
    \end{align*}
    Then Equation \eqref{eqn:formula:Psi:2} follows due to
    \begin{align*}
        \Psi(p) = p \lim_{T \rightarrow \infty} \frac{N(T)}{T} = p \ \left({\displaystyle \lim_{T \rightarrow \infty} \frac{T}{N(T)}}\right)^{-1} = p \ \left({\displaystyle \lim_{T \rightarrow \infty} \frac{\sum_{i=1}^{N(T)} A_i}{N(T)}}\right)^{-1} = \frac{p}{\exptn\left[A_{\infty}(p)\right]}.
    \end{align*}This proves the stated. 
\end{proof}

\subsection{Proof corresponding to Example \ref{example: Psi and H}} \label{appendix:proof:example:workload der:bounds}

For ease of writing, write $\Lambda_p := ({\Lambda}/{\theta_2})\, e^{-\theta_1 p}$. 
Due to $ 1 \geq  F_p(W;W) > 1-e^{-\Lambda_p}$ we have that
\[1 > \prob\left(\zeta < F_p(W;W)\right) > 1-e^{-\Lambda_p}.\]
Then, as per the definition of $\Xi[\zeta]$ in Example \ref{example: Psi and H},
\begin{align*}
    \exptn\left[\big(\Xi[\zeta]\big)^m\right] 
    &= \exptn\left[\big(\Xi[\zeta]\big)^m \,\big|\, \zeta < F_p(W;W) \right] \cdot \prob\left(\zeta < F_p(W;W)\right) \\
    &\quad + \exptn\left[\big(\Xi[\zeta]\big)^m \,\big|\, \zeta \geq F_p(W;W) \right] \cdot \prob\left(\zeta \geq F_p(W;W)\right) \\
    &\leq \left( \int_{0}^{1 - e^{-\Lambda_p}} 
        \left(1 - \frac{1}{1 - {\Lambda_p}/{\log(1 - x)}}\right)^m \mathrm{d}x \right)
        + e^{-\Lambda_p} \\
    &= \int_{0}^{1 - e^{-\Lambda_p}} 
        \left( \frac{\Lambda_p}{\Lambda_p - \log(1 - x)} \right)^m \mathrm{d}x
        + e^{-\Lambda_p}.
\end{align*}

Applying the substitution $1-x = e^{\Lambda_p (1-z)}$, and correspondingly, $-\mathrm{d}x = -\Lambda_p e^{\Lambda_p (1-z)} \ \mathrm{d}z$, the integral in the preceding display can be rewritten as 
\begin{align*}
    \int_{1}^{2} \Bigg(\frac{\Lambda_p}{\Lambda_p - \Lambda_p(1-z)}\Bigg)^m \Lambda_p e^{\Lambda_p(1-z)} \ \mathrm{d}z = \Lambda_p e^{\Lambda_p} \int_{1}^{2} \frac{e^{-\Lambda_p z}}{z^m} \ \mathrm{d}z = \Lambda_p e^{\Lambda_p} \,E_m(\Lambda_p).
\end{align*}
where we have defined $E_m(t): = \int_{1}^{2} ({e^{-tx}}/{x^m}) \, \mathrm{d}x.$
We thus conclude that
\begin{align*}
    \exptn\big[\left(\Xi[\zeta]\right)^m\big] \leq \Lambda_p e^{\Lambda_p} E_m(\Lambda_p) + e^{-\Lambda_p}.
\end{align*}
With $
    f_m(t) := t e^t E_m(t) + e^{-t}$, observe that for any fixed $t > 0$, the sequence $\left\{f_m(t)\right\}_{m \geq 1}$ is strictly decreasing in $m$. Therefore, it suffices to prove that $f_1(t) \leq 1$ for all $t > 0$, which follows due to
\begin{align*}
    f_1(t) = t e^t \int_{1}^{2} \frac{e^{-tx}}{x} \ \mathrm{d}{x} + e^{-t} < t e^t \int_{1}^{2} e^{-tx} \ \mathrm{d}{x} + e^{-t} = {t} e^t \frac{\left(e^{-t} - e^{-2t}\right)}{{t}} + e^{-t} = 1.
\end{align*}

\subsection{Proof of Lemma~\ref{lemma:example:gradient_bounds}} \label{appendix:proof:lemma:example:gradient_bounds}

\begin{proof}[Proof of Lemma~\ref{lemma:example:gradient_bounds}]
    First, we show via induction that for all $n \geq 0$,
    \[ \nabla_p \overline{W}_n \in \bigg[-\frac{\theta_1}{\theta_2}, 0\bigg].\]
    The base case $\nabla_p \overline{W}_0 = 0$ trivially satisfies the hypothesis. Let us assume that for $1 \leq k \leq n$, the induction hypothesis holds true. 
    Let $\zeta_{n+1} \sim \text{Uniform}[0,1]$, the seed for the interarrival time between the $n$-th and $(n+1)$-st customer, be sampled independently of $\overline{W}_n$. If $\zeta_{n+1} \geq F_p\big(\overline{W}_n;\overline{W}_n\big)$, then the $(n+1)$-st customer starts a new regeneration cycle, and $\nabla_p \overline{W}_{n+1} = 0$, and we are done. On the other hand, if $\zeta_{n+1} < F_p\big(\overline{W}_n;\overline{W}_n\big)$, then, by Equation \eqref{eqn:price der_inverse_F_p(zeta;w)}, 
    \begin{align*}
        \nabla_p \overline{W}_{n+1} &= \nabla_p \overline{W}_n - \Big(\theta_1 + \theta_2 \ \nabla_p \overline{W}_n\Big) \bigg(\theta_2 - e^{-\theta_1 p-\theta_2 \overline{W}_n} \frac{\Lambda}{\log(1-\zeta_{n+1})}\bigg)^{-1}.
    \end{align*}
    Denoting
    \begin{align*}
        U \equiv U\big(\zeta_{n+1}, \overline{W}_n\big) :=  -e^{-\theta_1 p-\theta_2 \overline{W}_n} \frac{\Lambda}{\log(1-\zeta_{n+1})} > 0,
    \end{align*}
    we can rewrite 
    \begin{align*}
        \nabla_p\overline{W}_{n+1} = \nabla_p\overline{W}_n \bigg(\frac{U}{\theta_2+U}\bigg) - \frac{\theta_1}{\theta_2+U}.
    \end{align*}
    which is an increasing linear map. As a result,
    \begin{align*}
        \nabla_p \overline{W}_{n+1} \in \bigg[-\frac{\theta_1}{\theta_2} \bigg(\frac{U}{\theta_2+U}\bigg) - \frac{\theta_1}{\theta_2+U}, \ - \frac{\theta_1}{\theta_2+U}\bigg] = \bigg[-\frac{\theta_1}{\theta_2}, -\frac{\theta_1}{\theta_2+U}\bigg] \subset \bigg[-\frac{\theta_1}{\theta_2}, 0 \bigg].
    \end{align*}
    Our induction proof is complete. Again referring to Equation \eqref{eqn:price der_inverse_F_p(zeta;w)} and the result we just proved, it now immediately follows that
    \begin{align*}
        0 \leq \nabla_p F^{-1}_p\big(\zeta_{n+1};\overline{W}_n\big) &= \Big(\theta_1 + \theta_2 \ \nabla_p\overline{W}_n\Big) \bigg(\theta_2 - e^{-\theta_1 p-\theta_2 \overline{W}_n} \frac{\Lambda}{\log(1-\zeta_{n+1})}\bigg)^{-1} 
        \\
        &\leq \frac{\Big(\theta_1 + \theta_2 \ \nabla_p\overline{W}_n\Big)}{\theta_2} \leq \frac{\theta_1}{\theta_2}.
    \end{align*}
    This completes the proof. 
\end{proof}

\subsection{Proof of Proposition~\ref{propn:model:theoretical_results}} \label{appendix:proof:propn:model:theoretical_results}

\begin{proof}[Proof of Proposition~\ref{propn:model:theoretical_results}]
    \begin{enumerate}
        \item[(a)] For now, let us restrict our analysis to the set \[{\mathscr E} := \big\{\omega: \overline{W}^{1}_0(\omega) \geq \overline{W}^{2}_0(\omega)\big\} \subseteq \Omega;\] the analysis for the other case ${\mathscr E}^c = \big\{\omega: \overline{W}^{1}_0(\omega) < \overline{W}^{2}_0(\omega) \big\}$ follows analogously.
        Let $g(x) = x - F_p^{-1}(\zeta;x)$. Then, by Assumption \ref{assumption: workload der: bounds}, we have that $g(\cdot)$ is an increasing function, as 
        $g'(x) = 1 - \nabla_w F_p^{-1}(\zeta;x) > 0$. 
        By the Lindley recursion it follows that $\overline{W}^{1}_n(\omega) \geq \overline{W}^{2}_n(\omega)$ for all $n \geq 1$, and $\omega \in {\mathscr E}$. In order to avoid cumbersome notation, for the rest of the proof, we will suppress $\omega$ in our random variables.
        
        \textbf{Case I: } Let $0 < \zeta_{k+1} \leq F_p\big(\overline{W}^{2}_k; \overline{W}^{2}_k\big)$, i.e., $\overline{W}^{1}_{k+1} \geq \overline{W}^{2}_{k+1} \geq 0$. Then, by the mean-value theorem, there exists a random variable $\widetilde{W}_k \in \big(\overline{W}^{2}_k, \overline{W}^{1}_k\big)$ such that
       \begin{align*}
            \overline{W}^{1}_{k+1} - \overline{W}^{2}_{k+1} 
            &= \overline{W}^{1}_k - F^{-1}_p\left(\zeta_{k+1}; \overline{W}^{1}_k\right) 
            - \overline{W}^{2}_k + F^{-1}_p\left(\zeta_{k+1}; \overline{W}^{2}_k\right) \\
            &= \left( \overline{W}^{1}_k - \overline{W}^{2}_k \right) 
            - \left( F^{-1}_p(\zeta_{k+1}; \overline{W}^{1}_k) - F^{-1}_p(\zeta_{k+1}; \overline{W}^{2}_k) \right) \\
            &= \left( \overline{W}^{1}_k - \overline{W}^{2}_k \right) 
            \left( 1 - \nabla_{w} F^{-1}_p(\zeta_{k+1}; \widetilde{W}_k) \right) \\
            &\leq \left( \overline{W}^{1}_k - \overline{W}^{2}_k \right) \,
            \Xi[\zeta_{k+1}],
        \end{align*}
        where the last inequality follows by Assumption \ref{assumption: workload der: bounds}.
        
        \textbf{Case II: } Now consider $F_p(\overline{W}^{1}_k; \overline{W}^{1}_k) > \zeta_{k+1} > F_p(\overline{W}^{2}_k; \overline{W}^{2}_k)$, i.e., $\overline{W}^{1}_{k+1} > \overline{W}^{2}_{k+1} = 0$. Then
       \begin{align*}
            \overline{W}^{1}_{k+1} - \overline{W}^{2}_{k+1} 
            &= \overline{W}^{1}_k - F^{-1}_p(\zeta_{k+1}; \overline{W}^{1}_k) \\
            &\leq \overline{W}^{1}_k - F^{-1}_p(\zeta_{k+1}; \overline{W}^{1}_k) 
            \underbrace{- \overline{W}^{2}_k + F^{-1}_p(\zeta_{k+1}; \overline{W}^{2}_k)}_{> 0} \\
            &\leq \left( \overline{W}^{1}_k - \overline{W}^{2}_k \right) 
            \,\Xi[\zeta_{k+1}].
        \end{align*}
        
        \textbf{Case III: } Finally, if $1 > \zeta_{k+1} > F_p(\overline{W}^{1}_k; \overline{W}^{1}_k)$, then $\overline{W}^{1}_{k+1} = \overline{W}^{2}_{k+1} = 0$, and clearly,
        \begin{align*}
            0 = \overline{W}^{1}_{k+1} - \overline{W}^{2}_{k+1} \leq (\overline{W}^{k}_1 - \overline{W}^{2}_k) \,\Xi[\zeta_{k+1}].
        \end{align*}
        We conclude that in all three cases, which encompass all possibilities $\zeta_{k+1} \in(0,1)$, and for any $m \geq 0$, we have that
        \begin{align*}
            \left(\overline{W}^{1}_{k+1} - \overline{W}^{2}_{k+1}\right)^m \leq \left(\overline{W}^{1}_k - \overline{W}^{2}_k\right)^m \,\big(\Xi[\zeta_{k+1}]\big)^m.
        \end{align*}
        Applying this inequality recursively from $k = 0$ to $k = n-1$, we conclude that for all $\omega \in {\mathscr E}$,
        \begin{align*}
            \left(\overline{W}^{1}_{n} - \overline{W}^{2}_{n}\right)^m \leq \left(\overline{W}^{1}_{0} - \overline{W}^{2}_{0}\right)^m \ \prod_{i=1}^{n} \big(\Xi[\zeta_i]\big)^m.
        \end{align*}
        Similarly, conditioning on the complementary event $\left\{\overline{W}^{1}_0 < \overline{W}^{2}_0 \right\}$, we obtain
        \begin{align*}
            \left(\overline{W}^{2}_{n} - \overline{W}^{1}_{n}\right)^m \leq \left(\overline{W}^{2}_{0} - \overline{W}^{1}_{0}\right)^m \ \prod_{i=1}^{n} \big(\Xi[\zeta_i]\big)^m.
        \end{align*}
        Recall that $(\Omega, \sigmaf, \prob)$ is the probability space on which the two queues have been coupled. Therefore, adding up both contributions,
        \begin{align*}
            \exptn\left\vert \overline{W}^{1}_n - \overline{W}^{2}_n \right\vert^m &= \int_{\left\{\omega: \ \overline{W}^{1}_0 \geq \overline{W}^{2}_0\right\}} \left(\overline{W}^{1}_n - \overline{W}^{2}_n\right)^m \ \mathrm{d}\prob(\omega) \ + \int_{\left\{\omega: \ \overline{W}^{1}_0 < \overline{W}^{2}_0\right\}} \left(\overline{W}^{2}_n - \overline{W}^{1}_n\right)^m \ \mathrm{d}\prob(\omega)
            \\
            &\leq \int_{\left\{\omega: \ \overline{W}^{1}_0(\omega) \geq \overline{W}^{2}_0(\omega)\right\}} \left(\overline{W}^{1}_{0} - \overline{W}^{2}_{0}\right)^m \ \prod_{i=1}^{n} \big(\Xi[\zeta_i]\big)^m \ \mathrm{d}\prob(\omega)
            \\
            &\hspace{1cm} +\int_{\big\{\overline{W}^{1}_0 < \overline{W}^{2}_0\big\}} \left(\overline{W}^{2}_{0} - \overline{W}^{1}_{0}\right)^m \ \prod_{i=1}^{n} \big(\Xi[\zeta_i]\big)^m \mathrm{d}\prob
            \\
            &= \int_{\Omega} \left\vert \overline{W}^{1}_0 - \overline{W}^{2}_0 \right\vert^m \ \prod_{i=1}^{n}  \big(\Xi[\zeta_i]\big)^m\ \mathrm{d}\prob.
        \end{align*}
        Using the fact that $\big\{ \zeta_i \big\}_{i \geq 1}$ are i.i.d., and independent of $\overline{W}^{1}_0$ and $\overline{W}^{2}_0$, we conclude
        \begin{align*}
            \exptn \left\vert \overline{W}^{1}_{n} - \overline{W}^{2}_{n} \right\vert^m \leq \exptn \left\vert \overline{W}^{1}_{0} - \overline{W}^{2}_{0} \right\vert^m  \prod_{i=1}^{n}  \exptn \big[ \big(\Xi[\zeta_i]\big)^m\big] = \gamma_m^n \exptn \left\vert \overline{W}^{1}_{0} - \overline{W}^{2}_{0} \right\vert^m.
        \end{align*}

        \item[(b)] Once again, we restrict our analysis to the event $\big\{\overline{W}^{1}_0 \geq \overline{W}^{2}_0 \big\}$.
        By the same reasoning as before, this implies that $\overline{W}^{1}_i \geq \overline{W}^{2}_i$ for all $i \geq 1$.
        Recall that for $n \geq 1$, 
        \begin{align*}
            A^{i}_n = F^{-1}_p\left(\zeta_{n}; \overline{W}^{i}_{n-1}\right).
        \end{align*}
        By the mean-value theorem in combination with Assumption \ref{assumption: workload der: bounds}, we have that for a random variable $\widetilde{W}_j \in \big(\overline{W}^{2}_j, \overline{W}^{1}_j\big)$,
        \begin{align}\label{eqn:A_1-A_2:upper_bound}
            A^{1}_j - A^{2}_j = F^{-1}_p\left(\zeta_{j+1}; \overline{W}^{1}_j\right) - F^{-1}_p\left(\zeta_{j+1}; \overline{W}^{2}_j\right) &= \left(\overline{W}^{1}_j - \overline{W}^{2}_j\right) \ \nabla_{w} F^{-1}_p\left(\zeta_{j+1}; \widetilde{W}_j \right) \notag
            \\
            &< \left(\overline{W}^{1}_j - \overline{W}^{2}_j\right). 
        \end{align}
       As a result, and by applying the conclusion of part (a),
        \begin{align*}
            \exptn\left\vert A^{1}_j - A^{2}_j \right\vert < \exptn\left\vert \overline{W}^{1}_j - \overline{W}^{2}_j \right\vert \leq \gamma_1^j \exptn\left\vert \overline{W}^{1}_0 - \overline{W}^{2}_0 \right\vert.
        \end{align*}
        The $n$-th effective arrival time in system $i=1,2$ is given by $\widetilde{A}^{i}_n = \sum_{j=1}^{n} A^{i}_j$. By Equation \eqref{eqn:A_1-A_2:upper_bound},
        \begin{align*}
            0 \leq \widetilde{A}^{1}_n - \widetilde{A}^{2}_n = \sum_{j=1}^{n} \left(A^{1}_j - A^{2}_j \right)<\sum_{j=0}^{n-1} \left(\overline{W}^{1}_j - \overline{W}^{2}_j\right) .
        \end{align*}
        Again using the result from part (a),
        \begin{align*}
            \exptn\big\vert \widetilde{A}^{1}_n - \widetilde{A}^{2}_n \big\vert &= \int_{\big\{\overline{W}^{1}_0 > \overline{W}^{2}_0 \big\}} \left(\widetilde{A}^{1}_n - \widetilde{A}^{2}_n\right) \ \mathrm{d}\prob + \int_{\big\{\overline{W}^{2}_0 \geq \overline{W}^{1}_0 \big\}} \left(\widetilde{A}^{2}_n - \widetilde{A}^{1}_n\right) \ \mathrm{d}\prob
            \\
            &\leq \int_{\big\{\overline{W}^{1}_0 > \overline{W}^{2}_0 \big\}} \sum_{j=0}^{n-1} \left(\overline{W}^{1}_j - \overline{W}^{2}_j\right) \ \mathrm{d}\prob + \int_{\big\{ \overline{W}^{2}_0 \geq \overline{W}^{1}_0 \big\}} \sum_{j=0}^{n-1} \left(\overline{W}^{2}_j - \overline{W}^{1}_j\right) \ \mathrm{d}\prob
            \\
            &= \int_{\Omega} \sum_{j=0}^{n-1} \left\vert \overline{W}^{1}_j - \overline{W}^{2}_j \right\vert \ \mathrm{d}\prob = \sum_{j=0}^{n-1} \exptn \left\vert \overline{W}^{1}_j - \overline{W}^{2}_j \right\vert \leq \sum_{j=0}^{n-1} \gamma_1^j \exptn\left\vert \overline{W}^{1}_0 - \overline{W}^{2}_0 \right\vert
            \\
            &= \bigg( \frac{1 - \gamma_1^n}{1-\gamma_1} \bigg) \exptn\left\vert \overline{W}^{1}_0 - \overline{W}^{2}_0 \right\vert.
        \end{align*}

        \item[(c)] We again restrict our analysis to the event $\big\{\overline{W}^{1}_0 \geq \overline{W}^{2}_0 \big\}$.
        Define\begin{align*}
         K := \inf\left\{n \geq 1 :  \underline{W}^{1}_n = \underline{W}^{2}_n \right\},   
        \end{align*} i.e., $K$ is the effective arrival number starting when the two queues are \textit{coupled with a time-lag}, where this time-lag is given by $\Delta := \big(\widetilde{A}^{1}_{K} - \widetilde{A}^{2}_{K} \big)$. Note that $N_1\left(0,t\right] - N_2\left(0,t\right]$ is increasing for $t \leq \widetilde{A}^{1}_{K}$, and is constant for $t > \widetilde{A}^{1}_{K}$. Then, for $t > \widetilde{A}^{1}_{K}$,
        \begin{align*}
            N_{1}\big(0,t \big] &= N_{1}\big(0, \widetilde{A}^{1}_{K}\big] + N_{1}\big(\widetilde{A}^{1}_{K},t \big] = K + N_{1}\big(\widetilde{A}^{1}_{K},t \big],
            \\
            N_2\big(0, t\big] &= N_2\big(0, \widetilde{A}^{2}_{K}\big] + N_{2}\big(\widetilde{A}^{2}_{K},t \big] = K + N_{2}\big(\widetilde{A}^{2}_{K},t \big] = K + N_{2}\big(\widetilde{A}^{2}_{K},t - \Delta \big] + N_2\big(t-\Delta, t\big].
        \end{align*}
        Observe that, since the two queues are coupled with a time-lag,
        \begin{align*}
            N_{1}\big(\widetilde{A}^{1}_{K},t \big] = N_{2}\big(\widetilde{A}^{2}_{K},t - \Delta \big].
        \end{align*}
        We then have that, for any $t > 0$,
        \begin{align}
            N_2\left(0,t\right] - N_1\left(0,t\right] = N_2\left(t-\Delta, t\right] \leq_{\text{st}} \overline{N}\left(\Delta\right),\label{sd}
        \end{align}
        where the random variable $\overline{N}(\Delta)$ records the number of arrivals in an interval of length $\Delta$, under a Poisson stream of arrivals with rate $\Lambda H\big(\underline{p}, 0\big)$. The stochastic dominance follows from an evident coupling argument: each interarrival time in the second queue stochastically dominates an $\text{Exp}\big(\Lambda\, H\big(\underline{p}, 0\big)\big)$ random variable. 
        Similarly, under the complementary event $\big\{\overline{W}^{1}_0 < \overline{W}^{2}_0 \big\}$ we have
        \eqref{sd} as well.
        In conclusion, 
         $\big\vert N_1\left(0,T^*\right] - N_2\left(0,T^*\right]\big\vert \leq_{\text{st}} \overline{N}\left(\Delta\right).$ 
         
        We have that $\overline{N}(\Delta)$ conditional on $\Delta=\delta$ has a Poisson distribution with parameter $\Lambda \,H\big(\underline{p}, 0\big) \, \delta$.
        Thus, by appealing to part~(b) of the present proposition,
        \begin{align*}
            \exptn \left\vert N_1\left(0, T^*\right] - N_2\left(0, T^*\right] \right\vert &\leq
            \exptn\Big[\overline{N}\big(\Delta\big)\Big] = \int_{0}^{\infty} \exptn\Big[\overline{N}\big(\Delta\big) \,\big\vert\, \Delta=\delta\Big] \, \prob\big(\Delta\in \mathrm{d} \delta\big)\\& = \int_{0}^{\infty} \Lambda H\big(\underline{p}, 0\big) \, \delta \, \prob\big(\Delta\in \mathrm{d} \delta\big)
            \\
            &= \Lambda \,H\big(\underline{p}, 0\big) \, \exptn\big[\Delta\big] \leq \frac{\Lambda \,H\big(\underline{p}, 0\big)}{1-\gamma_1} \exptn\left\vert \overline{W}^{1}_0 - \overline{W}^{2}_0 \right\vert.
        \end{align*}

        \item[(d)] Suppose there are $\sigma_T-2$ regeneration cycles excluding the partial regeneration cycles at the start and end of the interval $(0,T]$. Let $C_1, \ldots, C_{\sigma_T}$ be the corresponding number of arrivals in each of these time intervals, with $C_2, \ldots, C_{\sigma_T}-1$ being i.i.d. Let $C$, $\tau$ be steady-state number of effective arrivals and steady-state regeneration cycle length respectively.
        Then,
        \begin{align*}
            \frac{N_1\left(0,T\right]}{T} = \frac{C_1 + \cdots + C_{\sigma_T}}{\sigma_T} \times \frac{\sigma_T}{T}\asarrow \frac{\exptn\left[C\right]}{\exptn[\tau]} = \frac{1}{\exptn[A_{\infty}(p)]}.
        \end{align*}
        As a consequence, recalling that $g(\cdot) = x^m + a_1 x^{m-1} + \cdots + a_m$ is a degree-$m$ polynomial that $T$ has support $[T^*,\infty)$,
        \begin{align*}
            \exptn\left[\frac{1}{g\left(N_1(0,T]\right)}\right] &= \mathcal{O}_{\prob}\left(\exptn\left[\frac{1}{N_1\left(0, T\right]^m}\right]\right) = \mathcal{O}_{\prob}\left( \exptn\left[\frac{1}{T^m}\frac{1}{\left(N_1\left(0, T\right]/T\right)^m}\right]\right)
            \\
            &\leq \mathcal{O}_{\prob}\left(\sqrt{\exptn\left[\frac{1}{T^{2m}}\right]\exptn\left[\frac{1}{\left(N_1\left(0,T\right]/T\right)^{2m}}\right]}\right) 
            \\
            &\leq \mathcal{O}_{\prob}\left(\sqrt{\frac{1}{{T^*}^{2m}}\exptn\left[\frac{1}{\left(N_1\left(0, T\right]/T\right)^{2m}}\right]}\right)
            = \mathcal{O}\left(\frac{1}{{T^*}^m}\right). 
        \end{align*}
    \end{enumerate}This completes the proofs of the four parts. 
\end{proof}

\subsection{Proof of Proposition \ref{propn:estimator_is_consistent}} \label{appendix:proof:propn:estimator_is_consistent}

Recall that
\begin{align*}
    \widehat{\nabla \Psi}(p) := \frac{1}{\widehat{A_{\infty}}(p)} - p\frac{\widehat{\nabla A_{\infty}}(p)}{\widehat{A_{\infty}}(p)^2}, \ \text{and} \ \nabla \Psi(p) = \frac{1}{\exptn[A_{\infty}(p)]} - p\frac{\exptn[\nabla A_{\infty}(p)]}{\exptn[A_{\infty}(p)]^2}.  
\end{align*}
Our strategy is to first prove the statements
\begin{enumerate}
    \item[(a)] $\widehat{A_{\infty}}(p)\asarrow \exptn[A_{\infty}(p)]$,
    \item[(b)] $\widehat{\nabla A_{\infty}}(p)\asarrow \exptn[\nabla A_{\infty}(p)]$,
\end{enumerate}
so that, by the continuous mapping theorem, it follows that $\widehat{\nabla \Psi}(p)\asarrow \nabla\Psi(p)$.
We use the setup as before: we start at time $0$, and enforce an admission price $p$ until time $T$. Customers are indexed from 1 to $N$, where $N$ is a random variable, such that $\widetilde{A}_N = T$.

\medskip

\textit{Proof of (a):}
It is clear that $\big(A_i, \overline{W}_i\big)$ is an ergodic sequence. Therefore, by the ergodic theorem,
\begin{align*}
    \widehat{A_{\infty}}(p) = \frac{A_1 + \cdots + A_{N}}{N}\asarrow \exptn[A_{\infty}(p)].
\end{align*}
as $N$ goes to $\infty$, which happens as $T$ goes to $\infty$.

\medskip

\textit{Proof of (b):}
Suppose the sample path of the queue in our time period of observation $[0,T]$ can be divided into $M$ regeneration cycles, allowing the first and last cycle being partial cycles with $c_1, \ldots, c_M$ customers in each of those cycles, where $c_1 + \ldots + c_M = N$. 
Let \[\big\{\zeta_{1,1}, \ldots, \zeta_{1, c_1}, \zeta_{2,1}, \ldots, \zeta_{2, c_2}, \ldots, \zeta_{M,1}, \ldots, \zeta_{M,c_M} \big\}\] be seeds for the interarrival times (being independent samples from the \text{Uniform}[0,\,1] distribution), where
\begin{itemize}
    \item[$\circ$] $\zeta_{1,1}$ = $\zeta_1$, generating the arrival time of the first customer;
    \item[$\circ$]for $j \in \{1, \ldots, M\}$ and $i \in \{2, \ldots, c_j\}$, $\zeta_{j,i}$ is the seed which generates the interarrival time between the $(i-1)$-th and $i$-th customers in the $j$-th regeneration cycle
    \item[$\circ$] for $j \in \{2, \ldots, M\}$, $\zeta_{j,i}$ is the seed which generates the interarrival times between customer $c_{j-1}$ of regeneration cycle $j-1$, and the first customer of cycle $j$;
\end{itemize}
observe that this set of random quantities is the same as the set $\{\zeta_1, \zeta_2, \cdots, \zeta_{N}\}$ but enumerated differently. Then,
\begin{align*}
   \widehat{\nabla A_{\infty}}(p) =  \frac{1}{N-1} \times \Big(&\nabla F^{-1}_p\big(\zeta_{1,2};\overline{W}_{1,1}\big) + \cdots + \nabla F^{-1}_p\big(\zeta_{1,c_1};\overline{W}_{1,c_1-1}\big) + 
    \nabla F^{-1}_p\big(\zeta_{2,1}; \overline{W}_{1,c_1}\big)
    \\
    &+ \nabla F^{-1}_p\big(\zeta_{2,2};\overline{W}_{2,1}\big) + \cdots + \nabla F^{-1}_p\big(\zeta_{2,c_2};\overline{W}_{2,c_2-1}\big) + \nabla F^{-1}_p\big(\zeta_{3,1}; \overline{W}_{2,c_2}\big)
    +
    \\
    &+ \ \cdots
    \\
    &+ \nabla F^{-1}_p\big(\zeta_{M,2}; \overline{W}_{M,1}\big) + \cdots + \nabla F^{-1}_p\big(\zeta_{M,c_M}; \overline{W}_{M,c_M-1} \big) \Big).
\end{align*}

The next step is to define a collection of random vectors $\big\{\bs{Z}_1, \ldots, \bs{Z}_M\big\}$ as follows. Each vector $\bs{Z}_i$ consists of a set of independent \text{Uniform}[0,\,1] random variables along with the relevant queueing data from regeneration cycle $i$. Consequently, (i) the vectors $\big\{\bs{Z}_1, \ldots, \bs{Z}_M\big\}$ are mutually independent, and (ii) with the exception of $\bs{Z}_1$ and $\bs{Z}_M$, they are identically distributed. These vectors are defined through
\begin{align*}
    \bs{Z}_1 &= \big(\zeta_{1,2}, \ldots, \zeta_{1,c_1}, \overline{W}_{1,1}, \ldots, \overline{W}_{1,c_1} \big),
    \\
    \bs{Z}_2 &= \big(\zeta_{2,1}, \zeta_{2,2}, \ldots, \zeta_{2,c_2}, \overline{W}_{2,1}, \ldots, \overline{W}_{2,c_2} \big),
    \\
    &\vdots
    \\
    \bs{Z}_{M-1} &= \big(\zeta_{M-1, 1}, \zeta_{M-1,2}, \ldots, \zeta_{M-1,c_{M-1}}, \overline{W}_{M-1,1}, \ldots, \overline{W}_{M-1,c_{M-1}}\big),
    \\
    \bs{Z}_M &= \big(\zeta_{M, 1}, \zeta_{M,2}, \ldots, \zeta_{M,c_M}, \overline{W}_{M,1}, \ldots, \overline{W}_{M,c_M-1}\big).
\end{align*}
We use Equation (A.4) to split $\nabla F^{-1}_p\big(\zeta_{j,1}; \overline{W}_{j-1, c_{j-1}} \big)$ into a part depending on regeneration cycle $j-1$, and a part depending on regeneration cycle $j$.
Define a collection of functions $\big\{q^p\big(\bs{Z}_1\big), \cdots, q^p\big(\bs{Z}_M\big) \big\}$ via
\begin{align*}
    q^p\big(\bs{Z}_1\big) &= \nabla F^{-1}_p\big(\zeta_{1,2};\overline{W}_{1,1}\big) + \cdots + \nabla F^{-1}_p\big(\zeta_{1,c_1};\overline{W}_{1,c_1-1}\big) + 
   \Big(1-e^{-\theta_2 \overline{W}_{1,c_1}}\Big) \nabla \overline{W}_{1,c_1},
   \\
   q^p\big(\bs{Z}_2\big) &= -\frac{\theta_1}{\Lambda} e^{\theta_1 p} \log(1-\zeta_{2,1}) + \nabla F^{-1}_p\big(\zeta_{2,2};\overline{W}_{2,1}\big) + \cdots + \nabla F^{-1}_p\big(\zeta_{2,c_2};\overline{W}_{2,c_2-1}\big)
   \\
   & \ \ \ +\Big(1-e^{-\theta_2 \overline{W}_{2,c_2}}\Big) \nabla \overline{W}_{2,c_2},
   \\
   &\vdots
   \\
   q^p\big(\bs{Z}_M\big) &= -\frac{\theta_1}{\Lambda} e^{\theta_1 p} \log(1-\zeta_{M,1}) + \nabla F^{-1}_p\big(\zeta_{M,2};\overline{W}_{M,1}\big) + \cdots + \nabla F^{-1}_p\big(\zeta_{M,c_M};\overline{W}_{M,c_M-1}\big).
\end{align*}
It is clear that all vectors are independent, and that all but $q^p\big(\bs{Z}_1\big)$ and $q^p\big(\bs{Z}_M\big)$ are identically distributed. By the strong law of large numbers and the continuous mapping theorem, we therefore conclude that
\begin{align*}
    \widehat{\nabla A_{\infty}}(p) = \frac{1}{N-1} \sum_{j=1}^{M} q^{p}\big(\bs{Z}_j\big) = \bigg(\frac{M}{N-1}\bigg) \Bigg(\frac{\sum_{j=1}^{M} q^p\big(\bs{Z}_j\big)}{M} \Bigg)\asarrow \frac{\exptn\Big[q^p\big(\bs{Z}\big)\Big]}{\exptn\big[C \vert p\big]} = \nabla \exptn\big[A_{\infty}(p)].
\end{align*}

\subsection{Proofs for Section~\ref{subsection:bias analysis} - Bias Analysis} \label{appendix:proof:bias}

Recall that the upper bound on the bias could be decomposed into the following terms,
\begin{align*}
    \omega_k^{\rm (I)}&:={\Bigg\vert \exptn\left[\frac{1}{\widehat{A_{\infty}}(p_{k-1})} \ \Big\vert \mathcal{F}_{k-1} \right] - \frac{1}{\exptn\big[A_{\infty}(p_{k-1})\big]} \Bigg\vert},\\ \omega_k^{\rm (II)}&:=  {\Bigg\vert \exptn\left[\frac{\widehat{\nabla A_{\infty}}(p_{k-1})}{\widehat{A_{\infty}}(p_{k-1})^2} \ \Big\vert \mathcal{F}_{k-1} \right] - \frac{\nabla \exptn\big[A_{\infty}(p_{k-1})\big]}{\exptn\big[A_{\infty}(p_{k-1})\big]^2}\Bigg\vert}.
\end{align*}

Before establishing the convergence rates of these terms, as stated in Lemmas~\ref{L1} and~\ref{L2} we define
\begin{align*}
    \omega_k^{\rm (III)}:=\left\vert \exptn\left[\widehat{A_{\infty}}(p_{k-1}) \ \big\vert \mathcal{F}_{k-1} \right] - \exptn\big[A_{\infty}(p_{k-1})\big] \right\vert,
\end{align*}
which we analyze fist as an intermediate step.

\begin{lemma} \label{lemma:omega3} As $k\to\infty$,
    \begin{align} \label{eqn:term III:final_upper_bound}
    \omega_k^{\rm (III)}= \mathcal{O}\left(\eta_{k-1} \frac{1}{{T^*_{k-1}}^2}\right).
\end{align}
\end{lemma}
\begin{proof}
As a first step, observe that $\omega_1^{\rm (III)}$ can be majorized via
\begin{align}\label{eqn:term_III:temp_1}
   \omega_k^{\rm (III)} =\ &\left| \mathbb{E}\left[ \frac{1}{N_k} \sum_{j=\widetilde{N}_{k-1}+1}^{\widetilde{N}_k} A_j \,\bigg|\, \mathcal{F}_{k-1} \right] 
    - \mathbb{E}\left[A_{\infty}(p_{k-1})\right] \right| \notag \\
    =\ &\left| \mathbb{E}\left[ \frac{1}{N_k} \sum_{j=\widetilde{N}_{k-1}+1}^{\widetilde{N}_k} 
    \left( A_j - A_{\infty}(p_{k-1}) \right) \,\bigg|\, \mathcal{F}_{k-1} \right] \right| \notag \\
    =\ &\left| \mathbb{E}\left[ \mathbb{E}\left[ \frac{1}{N_k} \sum_{j=\widetilde{N}_{k-1}+1}^{\widetilde{N}_k} 
    \left( A_j - A_{\infty}(p_{k-1}) \right) \,\bigg|\, N_k, \mathcal{F}_{k-1} \right] 
    \,\bigg|\, \mathcal{F}_{k-1} \right] \right| \notag \\
    \leq\ &\mathbb{E}\left[ \frac{1}{N_k} \left| \sum_{j=\widetilde{N}_{k-1}+1}^{\widetilde{N}_k} 
    \mathbb{E}\left[ A_j - A_{\infty}(p_{k-1}) \,\big|\, N_k, \mathcal{F}_{k-1} \right] \right| 
    \,\bigg|\, \mathcal{F}_{k-1} \right] \notag \\
    \leq\ &\mathbb{E}\left[ \frac{1}{N_k} \sum_{j=\widetilde{N}_{k-1}+1}^{\widetilde{N}_k} 
    \mathbb{E}\left[ \left| A_j - A_{\infty}(p_{k-1}) \right| \,\big|\, N_k, \mathcal{F}_{k-1} \right] 
    \,\bigg|\, \mathcal{F}_{k-1} \right],
\end{align}
where in the first inequality we have used Jensen's inequality, and in the second inequality we have used the triangle inequality. 
In order to further dominate the expectation in \eqref{eqn:term_III:temp_1}, we construct a probability space $\chi$, and on this space we couple two M/G/1\,+\,$H(p, V)$ queues with the same admission price $p_{k-1}$, interarrival-time seeds, and service times but with different initial workloads, as follows: 
\begin{enumerate}
    \item[$\circ$] In the first queue we have an initial workload $W\big(\overline{T}_{k-1}\big)$ which is the workload in the queue at time $\overline{T}_{k-1}$, being the result of following the learning algorithm that we defined.
    \item[$\circ$]In the second queue we have an initial workload $\overline{W}_{\infty}(p_{k-1})$, i.e., the stationary workload in the queue with price $p_{k-1}$, sampled independently from the variables of the learning algorithm.
\end{enumerate}
We now have a setup where Proposition \ref{propn:model:theoretical_results} can be used. Specifically, 
\begin{enumerate}
    \item[$\circ$] In the first queue, $\overline{W}^{1}_0 \sim W\big(\overline{T}_{k-1}\big)$. Therefore, for $n \geq 1$, $\overline{W}^{1}_n \sim \overline{W}_{\widetilde{N}_{k-1}+n}$, and $A^{1}_n \sim A_{\widetilde{N}_{k-1}+n}$.
    \item[$\circ$] In the second queue, we start in stationarity, i.e., $\overline{W}^{2}_0 \sim \overline{W}_{\infty}(p_{k-1})$. It thus follows that for all $n \geq 1$, $\overline{W}^{2}_{n} \sim \overline{W}_{\infty}(p_{k-1})$ and $A^{2}_n \sim A_{\infty}(p_{k-1})$.
\end{enumerate}
Then, for any $j \in \big\{ \widetilde{N}_{k-1}+1, \cdots, \widetilde{N}_k\big\}$, by Proposition \ref{propn:model:theoretical_results}(c),
\begin{align*}
    \exptn\big[ \big\vert A_j - A_{\infty}(p_{k-1}) \big\vert \ \big\vert N_k, \mathcal{F}_{k-1} \big] \leq \gamma_1^{j-\widetilde{N}_{k-1}} \exptn\left[\left\vert W\left(\overline{T}_{k-1}\right) - \overline{W}_{\infty}\left(p_{k-1}\right) \right\vert \ \big\vert \,\mathcal{F}_{k-1}\right].
\end{align*}
Substituting this into inequality \eqref{eqn:term_III:temp_1},  recalling that $\gamma_1<1$, 
\begin{align*}
   \omega_k^{\rm (III)}  &\leq \exptn\left[\frac{1}{N_k} \exptn\left[\left\vert W\left(\overline{T}_{k-1}\right) - \overline{W}_{\infty}\left(p_{k-1}\right) \right\vert \ \big\vert \mathcal{F}_{k-1}\right] \sum_{j=1}^{N_k} \gamma_1^j \ \Big\vert \mathcal{F}_{k-1}\right]
    \\
    &<_{\text{a.s.}} \exptn\left[\left\vert W\left(\overline{T}_{k-1}\right) - \overline{W}_{\infty}(p_{k-1}) \right\vert \ \big\vert \mathcal{F}_{k-1} \right]\\
    &\le \ \exptn\big[\left\vert W\left(\overline{T}_{k-1}\right) - \overline{W}_{\infty}(p_{k-2}) \right\vert \ \big\vert \mathcal{F}_{k-1} \big] + \exptn\big[\left\vert \overline{W}_{\infty}(p_{k-2}) - \overline{W}_{\infty}(p_{k-1}) \right\vert \ \big\vert \mathcal{F}_{k-1} \big] \notag,
\end{align*}
where in the last step we once more used the triangle inequality. 

Now appealing to Proposition \ref{propn:model:theoretical_results}(b),
\begin{align*}
    \exptn\left[\left\vert W\left(\overline{T}_{k-1}\right) - \overline{W}_{\infty}(p_{k-2}) \right\vert \ \big\vert \mathcal{F}_{k-1} \right] \leq \gamma_1^{N_{k-1}} \exptn\left[\left\vert W\left(\overline{T}_{k-2}\right) - \overline{W}_{\infty}(p_{k-2}) \right\vert \ \big\vert \mathcal{F}_{k-1} \right],
\end{align*}
so that we arrive at the upper bound
\begin{align}\label{eqn:term III:temp_2}
    \omega_k^{\rm (III)} \leq \ \delta_{k-1} \leq \gamma_1^{N_{k-1}} \delta_{k-2} + \rho_{k-2},
\end{align}
with, for any $i\in{\mathbb N}$, for ease suppressing the dependence on $k$,
\begin{align}
    \delta_i &:= \exptn\left[\left\vert W\left(\overline{T}_{i}\right) - \overline{W}_{\infty}(p_{i}) \right\vert \ \big\vert \mathcal{F}_{k-1} \right], \quad 
    \rho_i := \exptn\left[\big\vert \overline{W}_{\infty}(p_i) - \overline{W}_{\infty}(p_{i+1})\big\vert \ \big\vert \mathcal{F}_{k-1}\right]. \label{eqn:rho:defn}  
\end{align}
By iteration we obtain that, the empty product being defined as $1$, 
\begin{align} \label{eqn:delta:stochastic upper bound}
    \delta_{k-1} &\leq \gamma_1^{N_1 + \cdots + N_{k-1}} \delta_0 + 
    \sum_{\ell=0}^{k-2} \rho_{\ell}\left(\prod_{m=\ell+2}^{k-1} N_m\right)
    = \mathcal{O}_{\prob} \left(\rho_{k-2}\right);
\end{align} 
here it has been used (i)~that, as $k \rightarrow \infty$, $N_{k-1} \rightarrow \infty$, (ii)~that, by Proposition \ref{propn:model:theoretical_results}(a), $\gamma_1 < 1$ (also noting that the $\rho_i$ are bounded quantities).
Applying Taylor’s theorem, and invoking Corollary~\ref{corr:bounds_on_gradients}, 
\begin{align*} 
    \rho_{k-2} &= \exptn\left[\left\vert W_{\infty}(p_{k-2}) - W_{\infty}(p_{k-1}) \right\vert \ \big\vert \mathcal{F}_{k-1} \right] \notag
    \\
    &\leq G_{\max} \exptn\left[\left\vert p_{k-2} - p_{k-1}\right\vert \big\vert \mathcal{F}_{k-1}\right] \leq G_{\max} \exptn\left[\left\vert \eta_{k-1} \widehat{\nabla\Psi}(p_{k-2})\right\vert \ \big\vert \mathcal{F}_{k-1}\right] \notag
    \\
    &= G_{\max} \eta_{k-1} \exptn\left[\left\vert \frac{1}{\widehat{A_{\infty}}(p_{k-2})} - p_m\frac{\widehat{\nabla A_{\infty}}(p_{k-2})} {\widehat{A_{\infty}}(p_{k-2})^2} \right\vert \ \Big\vert \mathcal{F}_{k-1} \right]\notag  \\&\overset{(*)}{=} \mathcal{O}_{\prob} \left(\eta_{k-1} \exptn\left[\frac{1}{\Gamma^2} \ \Big\vert \mathcal{F}_{k-1}\right]\right) = \mathcal{O}_{\prob} \left(\eta_{k-1} \frac{1}{(N_{k-1}-2)(N_{k-1}-3)}\right). \notag
\end{align*}
where the equality marked $(*)$ has been established, for
$\left[\,\Gamma\, \vert \,\mathcal{F}_{k-1}\,\right] \sim \text{Gamma}\big(N_{k-1}-1, \Lambda e^{-\theta_1 \underline{p}}\big)$,  in Section \ref{subsection:variability analysis}.
Then by Proposition \ref{propn:model:theoretical_results}(d),
\begin{align}\label{eqn:rho:stochastic upper bound}
    \rho_{k-2} \le \mathcal{O}\left(\eta_{k-1} \frac{1}{{T^*_{k-1}}^2}\right).
\end{align}
The proof is concluded by combining \eqref{eqn:term III:temp_2}, \eqref{eqn:delta:stochastic upper bound} and \eqref{eqn:rho:stochastic upper bound}.
\end{proof}

\begin{proof}[Proof of Lemma~\ref{L1}]
Let $\epsilon$ be such that $0 < \epsilon <  \exptn\big[A_{\infty}(\underline{p})\big] < \infty$. Then, by Jensen's inequality,
\begin{align*}
    &\hspace{-0.2cm}\omega_k^{\rm (I)}\leq \exptn\left[ \left\vert \frac{1}{\widehat{A_{\infty}}(p_{k-1})} - \frac{1}{\exptn[A_{\infty}(p_{k-1})]} \right\vert \Big\vert \mathcal{F}_{k-1} \right] \leq \omega_k^{\rm (I.a)}+\omega_k^{\rm (I.b)},
\end{align*}
 where, with ${\boldsymbol 1}_k= \mathbbm{1}\big\{\widehat{A_{\infty}}(p_{k-1}) \geq \epsilon\big\}$,
   \begin{align*}\omega_k^{\rm (I.a)}
   &:={\exptn\left[ \left\vert \frac{1}{\widehat{A_{\infty}}(p_{k-1})} - \frac{1}{\exptn[A_{\infty}(p_{k-1})]} \right\vert  {\boldsymbol 1}_k\,\Big\vert \mathcal{F}_{k-1} \right]},\\\omega_k^{\rm (I.b)}&:={\exptn\left[ \left\vert \frac{1}{\widehat{A_{\infty}}(p_{k-1})} - \frac{1}{\exptn[A_{\infty}(p_{k-1})]} \right\vert (1-{\bs 1}_k) \, \Big\vert \mathcal{F}_{k-1} \right]},
\end{align*}

We now proceed to analyze both terms separately.

\smallskip 

We start by exploring the decay of $\omega_k^{\rm (I.a)}$. By Taylor's theorem, there exists a random variable $\xi$ between $\widehat{A_{\infty}}(p_{k-1})$ 
and $\exptn\left[A_{\infty}(p_{k-1})\right]$ 
such that
\begin{align*}
    \left\vert\frac{1}{\widehat{A_{\infty}}(p_{k-1})} - \frac{1}{\exptn[A_{\infty}(p_{k-1})]} \right\vert=  \frac{1}{\xi^2} \left\vert\widehat{A_{\infty}}(p_{k-1}) - \exptn[A_{\infty}(p_{k-1})]\right\vert.
\end{align*}
Then,
\begin{align*}
    \omega_k^{\rm (I.a)}&= \exptn\left[ \frac{1}{\xi^2} \left\vert \widehat{A_{\infty}}(p_{k-1}) - \exptn[A_{\infty}(p_{k-1})]\right\vert\, {\bs 1}_k \Big\vert \mathcal{F}_{k-1} \right]
    \\
    & \leq_{\text{a.s.}} \frac{1}{\epsilon^2} \exptn\left[\left\vert \widehat{A_{\infty}}(p_{k-1}) - \exptn[A_{\infty}(p_{k-1})]\right\vert {\bs 1}_k \ \big\vert \mathcal{F}_{k-1}\right]
    \\
    & \leq_{\text{a.s.}} \frac{1}{\epsilon^2} \exptn\Big[\left\vert \widehat{A_{\infty}}(p_{k-1}) - \exptn[A_{\infty}(p_{k-1})]\right\vert {\bs 1}_k 
    \\
    &\hspace{2cm} + \left\vert \widehat{A_{\infty}}(p_{k-1}) - \exptn[A_{\infty}(p_{k-1})]\right\vert (1 - {\bs 1}_k) \ \big\vert \mathcal{F}_{k-1}\Big]
    \\
    &= \frac{1}{\epsilon^2} \exptn\left[\Big\vert \widehat{A_{\infty}}(p_{k-1}) - \exptn[A_{\infty}(p_{k-1})] \Big\vert \ \big\vert \mathcal{F}_{k-1} \right]
    = \mathcal{O}\left(\eta_{k-1} \frac{1}{{T^*_{k-1}}^2}\right),
\end{align*}
where we have used $\xi > \epsilon$ in the first inequality, and Lemma \ref{lemma:omega3} in the final step.

\smallskip

We continue by showing that $\omega_k^{\rm (I.b)}$, as $k \rightarrow \infty$, decays faster than $\omega_k^{\rm (I.a)}$, so that $\omega_k^{\rm (I.b)}$ becomes negligible. By `splitting the event of interest into slices', we can bound the expectations corresponding to the resulting subevents. As a first step, with 
\begin{align*}
    {\boldsymbol i}_{k,j}:= \mathbbm{1}\big\{\epsilon/2^{j+1}<\widehat{A_{\infty}}(p_{k-1}) \leq \epsilon/2^j\big\},    
\end{align*}
we write
\begin{align*}
   \omega_k^{\rm (I.b)} = \sum_{j=0}^{\infty} \ \exptn\Bigg[ \left\vert \frac{1}{\widehat{A_{\infty}}(p_{k-1})} - \frac{1}{\exptn[A_{\infty}(p_{k-1})]} \right\vert \,{\boldsymbol i}_{k,j}\, \Big\vert \mathcal{F}_{k-1} \Bigg].
\end{align*}
Now note that in case ${\bs i}_{k,j}$ equals $1$ for a given $j \geq 0$, we have, recalling that $0 < \epsilon <  \exptn\big[A_{\infty}(\underline{p})\big]$,
\begin{align*}
    \frac{2^{j+1}}{\epsilon} > \frac{1}{\widehat{A_{\infty}}(p_{k-1})} - \frac{1}{\exptn[A_{\infty}(p_{k-1})]} \geq \frac{1}{\widehat{A_{\infty}}(p_{k-1})} - \frac{1}{\exptn\left[A_{\infty}(\underline{p})\right]} > \frac{2^j}{\epsilon} - \frac{1}{\epsilon} \geq 0,
\end{align*}
thus leading to the inequality
\begin{align*}
    \exptn\Bigg[ \left\vert \frac{1}{\widehat{A_{\infty}}(p_{k-1})} - \frac{1}{\exptn[A_{\infty}(p_{k-1})]} \right\vert \,{\boldsymbol i}_{k,j} \Big\vert \mathcal{F}_{k-1} \Bigg]&< \frac{2^{j+1}}{\epsilon} \,\prob\{{\boldsymbol i}_{k,j}\,\vert\, \mathcal{F}_{k-1}\},
\end{align*}
so that
\begin{align*}
    \omega_k^{\rm (I.b)}\leq \sum_{j=0}^{\infty} \ \frac{2^{j+1}}{\epsilon} \ \prob\{{\boldsymbol i}_{k,j}\,\vert\, \mathcal{F}_{k-1}\}.
\end{align*}
We continue by further bounding $\prob\{{\boldsymbol i}_{k,j}\,\vert\, \mathcal{F}_{k-1}\}$. 
\begin{align*}
    \prob\{{\boldsymbol i}_{k,j}\,\vert\, \mathcal{F}_{k-1}\} &\leq   \ \prob \left\{\frac{\epsilon}{2^{j+1}} < \widehat{A_{\infty}}(p_{k-1}) \leq \frac{\epsilon}{2^j} \ \Big\vert \mathcal{F}_{k-1}\right\}= \prob\left\{ \frac{\epsilon}{2^{j+1}} < \frac{T_k}{N_k} \leq \frac{\epsilon}{2^j} \ \Big\vert \mathcal{F}_{k-1} \right\}
    \\
    & \leq \prob\left\{ \frac{2^j}{\epsilon} T_k \leq N_k \ \Big\vert \mathcal{F}_{k-1} \right\}
    \\
    & \leq  \prob\left\{ \frac{2^j}{\epsilon} T^{*}_k \leq N\left(\overline{T}_{k-1}, \overline{T}_{k-1}+T^{*}_k\right] + 1 \ \Big\vert \mathcal{F}_{k-1} \right\}.
\end{align*}
Let $\overline{N}(\cdot)$ be the counting process of arrivals in a Poisson process with arrival rate $\Lambda^\circ  = \Lambda H\big(\underline{p}, 0\big)$. 
From an easy coupling argument, it follows that $\overline{N}\big(T^*_k\big) \equiv \overline{N}\big(\overline{T}_{k-1}, \overline{T}_{k-1}+T^*_k\big] \geq_{\text{st}} N_k$, so that
\begin{align*}
    \prob\{{\boldsymbol i}_{k,j}\,\vert\, \mathcal{F}_{k-1}\} \leq  \prob\left\{ \frac{2^j}{\epsilon} T^{*}_k-1 \leq \overline{N}(T^{*}_k)\right\}.
\end{align*}
Now pick $a_j = {2^j T^{*}_k}/{\epsilon}$, $\mu = \Lambda^\circ  T^*_k$, and $f_j = a_j/\mu$. Then, using the Chernoff bound for $\overline{N}\left(T^{*}_k\right)$, we find
\begin{align*}
    \prob\left\{\overline{N}\left(T^{*}_k\right) \geq a_j\right\} &\leq \exp\left(-\mu\left(f_j\log(f_j) - f_j + 1\right)\right)
    \\
    &= \exp\left(-\Lambda^\circ  T^{*}_k\right) \exp\left(-\frac{2^j T^*_k}{\epsilon} \big(j\log 2 - \log\left(\Lambda^\circ \epsilon\right) - 1\big)\right). 
\end{align*}
Now put $C_2 := \log 2/\epsilon$ and $C_3 := - \log\left(\Lambda^\circ \epsilon\right) - 1$. Then, for any $\epsilon < \Lambda^\circ /e$, $C_2, C_3 > 0$, we obtain
\begin{align*}
    \prob\left\{\overline{N}\left(T^{*}_k\right) \geq a_j\right\} \leq \exp\left(\Lambda^\circ  T^{*}_k\right) \exp\left(-C_2 T^{*}_k \ j 2^j - C_3 T^{*}_k \ 2^j\right) \leq \exp\left(\Lambda^\circ  T^{*}_k\right) \exp\left(-C_2 T^*_k \ j 2^j\right).
\end{align*}
Upon combining the above, we have arrived at the upper bound
\begin{align*}
    \omega_k^{\rm (I.b)}&\leq \exp\left(-\Lambda^\circ  T^{*}_k\right) \sum_{j=0}^{\infty} \frac{2^{j+1}}{\epsilon} \exp\left(-C_2 T^*_k \ j 2^j\right)
    = \exp\left(-\Lambda^\circ  T^{*}_k\right) \frac{2}{\epsilon} \sum_{j=0}^{\infty} \exp\left(j\log 2 -C_2 T^*_k \ j 2^j\right).
\end{align*}
We are left with verifying that the expression on the right-hand side is  negligible compared to $\omega_k^{\rm (I.a)}$ as $k\to\infty$.
Since $j \log 2 \ll j2^j$, there exists $C_2' < C_2$ such that $j \log 2 - C_2 T^{*}_k \ j 2^j < -C_2' T^{*}_k \ j 2^j$.
As a consequence, $\exp\left(j\log 2 -C_2 T^*_k \ j 2^j\right) < \exp\left(-C_2' T^{*}_k \ j2^j\right)$, which becomes exponentially small as $j \rightarrow \infty$. As we can find a constant $C_2''$ such that
\begin{align*}
    \sum_{j=0}^{\infty} \exp\left(-C_2' T^{*}_k \ j2^j\right) \leq C_2'',
\end{align*}
we thus  conclude that
$
   \omega_k^{\rm (I.b)}\leq \mathcal{O}\big(e^{-\Lambda^\circ  T^*_k}\big),$
   which is asymptotically negligible relative to the contribution of $\omega_k^{\rm (I.a)}$.
Combining our findings regarding the decay of both terms, we obtain an upper bound to the the decay rate of $\omega_k^{\rm (I)}$. 
\end{proof}

\begin{proof}[Proof of Lemma~\ref{L2}]
In the analysis of the decay of $\omega_k^{\rm (II)}$ we use a reasoning that is similar to the one we used for analyzing $\omega_k^{\rm (I)}$ in Lemma \ref{L1}, in that we split the quantity of interest into two contributions, as follows:
\begin{align*}
    \omega_k^{\rm (II)}
    & \leq \exptn\Bigg[\left\vert \frac{\widehat{\nabla A_{\infty}}(p_{k-1})}{\widehat{A_{\infty}}(p_{k-1})^2} - \frac{\nabla \exptn\big[A_{\infty}(p_{k-1})\big]}{\exptn\big[A_{\infty}(p_{k-1})\big]^2} \right\vert  \Big\vert \mathcal{F}_{k-1}\Bigg] =\omega_k^{\rm (II.a)}+\omega_k^{\rm (II.b)},
\end{align*}
where, with ${\bs 1}_k$ as defined in the proof of Lemma \ref{L1},
\begin{align*}
    &  \omega_k^{\rm (II.a)}:={\exptn\Bigg[\left\vert \frac{\widehat{\nabla A_{\infty}}(p_{k-1})}{\widehat{A_{\infty}}(p_{k-1})^2} - \frac{\nabla \exptn\big[A_{\infty}(p_{k-1})\big]}{\exptn\big[A_{\infty}(p_{k-1})\big]^2} \right\vert \,{\bs 1}_k\,\Big\vert \mathcal{F}_{k-1}\Bigg]},
    \\
    & \omega_k^{\rm (II.b)}:= {\exptn\Bigg[\left\vert \frac{\widehat{\nabla A_{\infty}}(p_{k-1})}{\widehat{A_{\infty}}(p_{k-1})^2} - \frac{\nabla \exptn\big[A_{\infty}(p_{k-1})\big]}{\exptn\big[A_{\infty}(p_{k-1})\big]^2} \right\vert (1-{\bs 1}_k)\Big\vert \mathcal{F}_{k-1}\Bigg]}.
\end{align*}

\smallskip

We start by analyzing the decay of $\omega_k^{\rm (II.a)}$. Let $\xi_1$ and $\xi_2$ lie between $\widehat{\nabla A_{\infty}}(p_{k-1})$ and $\nabla\exptn[A_{\infty}(p_{k-1})]$. By the multivariate version of Taylor's theorem, applied to the function $f(x,y) ={x}/{y^2}$ at the point $\left(\nabla\exptn[A_{\infty}(p_{k-1})], \exptn[A_{\infty}(p_{k-1})]\right)$, we obtain
\begin{align*}
    \frac{\widehat{\nabla A_{\infty}}(p_{k-1})}{\widehat{A_{\infty}}(p_{k-1})^2} - \frac{\nabla \exptn\big[A_{\infty}(p_{k-1})\big]}{\exptn\big[A_{\infty}(p_{k-1})\big]^2} &= \left(\widehat{\nabla A_{\infty}}(p_{k-1}) - \nabla\exptn[A_{\infty}(p_{k-1})]\right)\cdot \frac{1}{\xi_2^2} \:-\\&\quad \quad \left(\widehat{A_{\infty}}(p_{k-1}) - \exptn[A_{\infty}(p_{k-1})]\right) \cdot 2\,\frac{\xi_1}{\xi_2^3}
   \end{align*}
   so that
    \begin{align*}&\left\vert \frac{\widehat{\nabla A_{\infty}}(p_{k-1})}{\widehat{A_{\infty}}(p_{k-1})^2} - \frac{\nabla \exptn\big[A_{\infty}(p_{k-1})\big]}{\exptn\big[A_{\infty}(p_{k-1})\big]^2} \right\vert \leq \left\vert\widehat{\nabla A_{\infty}}(p_{k-1}) - \nabla\exptn[A_{\infty}(p_{k-1})]\right\vert \left(\frac{1}{\xi_2^2} + 2\,\frac{\xi_1}{\xi_2^3} \right).
\end{align*}
Recalling that $\xi_2 > \epsilon$, and $\xi_1 \leq G_{\max}$, we find
\begin{align*}    
    \omega_k^{\rm (II.a)} &  \leq_{\text{a.s.}} \exptn\Bigg[\left\vert\widehat{\nabla A_{\infty}}(p_{k-1}) - \nabla\exptn[A_{\infty}(p_{k-1})]\right\vert \left(\frac{1}{\xi_2^2} + 2\frac{\xi_1}{\xi_2^3} \right) \,{\bs 1}_k\, \Big\vert \mathcal{F}_{k-1}\Bigg]
    \\
    &  \leq_{\text{a.s.}} \left(\frac{1}{\epsilon^2} + G_{\max} \frac{2}{\epsilon^3}\right) \exptn\left[\left\vert\widehat{\nabla A_{\infty}}(p_{k-1}) - \nabla\exptn[A_{\infty}(p_{k-1})]\right\vert \,{\bs 1}_k\,\Big\vert \mathcal{F}_{k-1}\right]
    \\
    & \leq_{\text{a.s.}} \left(\frac{1}{\epsilon^2} + G_{\max} \frac{2}{\epsilon^3}\right) \exptn\Big[\left\vert \widehat{A_{\infty}}(p_{k-1}) - \exptn[A_{\infty}(p_{k-1})]\right\vert\,{\bs 1}_k
    \\
    &\hspace{3.2cm}+ \left\vert \widehat{A_{\infty}}(p_{k-1}) - \exptn[A_{\infty}(p_{k-1})]\right\vert (1-{\bs 1}_k)\,\big\vert \mathcal{F}_{k-1}\Big]
    \\
    &  = \left(\frac{1}{\epsilon^2} + G_{\max}\frac{2}{\epsilon^3}\right) \exptn\left[ \left\vert\widehat{\nabla A_{\infty}}(p_{k-1}) - \nabla\exptn[A_{\infty}(p_{k-1})]\right\vert \big\vert \mathcal{F}_{k-1}\right]  = \mathcal{O}\left(\eta_{k-1} \frac{1}{{T^*_{k-1}}^2}\right).
\end{align*}

\smallskip

We finally aim to show that $\omega_k^{\rm (II.b)}$ is asymptotically negligible relative to $\omega_k^{\rm (II.a)}$. To this end, observe, with ${\bs i}_{k,j}$ as defined in the proof of Lemma \ref{L1}, 
\begin{align*}
    &\omega_k^{\rm (II.b)}= \sum_{j=0}^{\infty} \exptn\Bigg[\left\vert \frac{\widehat{\nabla A_{\infty}}(p_{k-1})}{\widehat{A_{\infty}}(p_{k-1})^2} - \frac{\nabla \exptn\big[A_{\infty}(p_{k-1})\big]}{\exptn\big[A_{\infty}(p_{k-1})\big]^2} \right\vert{\bs i}_{k,j}\, \Big\vert \mathcal{F}_{k-1}\Bigg].
\end{align*}
On the event $\{ {\epsilon}/{2^{j+1}} < \widehat{A_{\infty}}(p_{k-1}) \leq {\epsilon}/{2^j} \}$, again applying Taylor's theorem, and with reference to Corollary \ref{corr:bounds_on_gradients}, $\left\vert \widehat{\nabla A_{\infty}}(p_{k-1})\right\vert \leq G_{\max}$ and $\nabla \exptn[A_{\infty}(p_{k-1})] < G_{\max}$, we have
\begin{align*}
    \left\vert \frac{\widehat{\nabla A_{\infty}}(p_{k-1})}{\widehat{A_{\infty}}(p_{k-1})^2} - \frac{\nabla \exptn\big[A_{\infty}(p_{k-1})\big]}{\exptn\big[A_{\infty}(p_{k-1})\big]^2} \right\vert &\leq \left\vert \frac{\widehat{\nabla A_{\infty}}(p_{k-1})}{\widehat{A_{\infty}}(p_{k-1})^2} \right\vert + \left\vert \frac{\nabla \exptn\big[A_{\infty}(p_{k-1})\big]}{\exptn\big[A_{\infty}(p_{k-1})\big]^2} \right\vert \leq G_{\max} \frac{2^{2j+3}}{\epsilon^2}.
\end{align*}
Therefore,
\begin{align*}
    \omega_k^{\rm (II.b)} \leq \sum_{j=0}^{\infty} G_{\max} \frac{2^{2j+3}}{\epsilon^2}\,\prob\{{\boldsymbol i}_{k,j}\,\vert\, \mathcal{F}_{k-1}\}.
\end{align*}
From this point on, we essentially follow the argumentation developed in the analysis of $\omega_k^{\rm (I.b)}$. In particular, we have that
\begin{align*}
     \sum_{j=0}^{\infty} G_{\max} \frac{2^{2j+3}}{\epsilon^2}\prob\left\{ \frac{2^j T^*_k}{\epsilon}-1 \leq \overline{N}(T^*_k) \right\}= \mathcal{O}\big(e^{-\Lambda^\circ  T^*_k}\big).
\end{align*}
Combining the results we found for $\omega_k^{\rm (II.a)}$ and $\omega_k^{\rm (II.b)}$, we have proven the stated.
\end{proof}

\subsection{Proof of Theorem \ref{thm:price_convergence_rate}} \label{appendix:proof:th:price_convergence_rate}

First, we show that there exists $\eta < \infty$ such that Assumption~\ref{assk}(a) is satisfied. Consider the function
\begin{align*}
    v(x; \eta,\alpha) := 1 + \frac{K_0\eta}{2} x^{-\alpha} - \left(1 + \frac{1}{x}\right)^{\alpha}.
\end{align*}
We prove that there exists $\eta < \infty$ such that $v(x; \eta,\alpha) > 0$ for all $x \geq 1$, $\alpha \in \big(\frac{1}{2}, 1\big]$, i.e.,
\begin{align*}
    \frac{2}{K_0} \sup_{\substack{x \geq 1 \\\alpha \in \big(\frac{1}{2}, 1\big]}} x^\alpha \left(\left(1 + \frac{1}{x}\right)^\alpha - 1\right) < \infty.
\end{align*}
Towards that,
\begin{align*}
    \frac{2}{K_0} x^\alpha \left(\left(1 + \frac{1}{x}\right)^\alpha - 1\right) &= \frac{2}{K_0} x^\alpha \left(\frac{\alpha}{x} + \frac{\alpha(\alpha-1)}{2! \ x^2} + \cdots\right)
    \\
    &\leq \frac{2}{K_0} x^{\alpha} \left(\frac{\alpha}{x} + \frac{\alpha^2}{2 \ x^2} + \frac{\alpha^3}{2^2 \ x^3} + \cdots \right)
    \\
    &\leq \frac{4}{K_0} x^{\alpha} \left(\frac{\alpha}{2x} + \left(\frac{\alpha}{2x}\right)^2 + \left(\frac{\alpha}{2x}\right)^3 + \cdots \right)
    \\
    &= \frac{4}{K_0} x^{\alpha} \frac{\alpha}{2x - \alpha} 
    \leq \frac{4}{K_0} x^{\alpha} \frac{\alpha}{x} \leq \frac{4}{K_0}.
\end{align*}
Clearly, we can choose $\eta \geq \frac{4}{K_0}$, and hypothesis (a) is satisfied. Now suppose $T^*_k = \tau g(k)$. 
Due to \eqref{eqn:bias:final_rate}, $\tau$ can be chosen sufficiently large so that hypothesis (b) is satisfied.
Note that \cite[Thm.\ 2]{ChenLiuHong2024} assumes condition (c) about the variability $\nu_k$, which due to {the fact that $\nu_k = \mathcal{O}(1)$ as $k \rightarrow \infty$ (see Section \ref{subsection:variability analysis})}
holds trivially in our case when choosing $K_3 = \eta C_1$.
We now reproduce the first part of the proof of \cite{ChenLiuHong2024} within the context of our model.
Define for $j \geq 1$, $e_{j} := \exptn\left[(p_{j-1}-p^*)^2\right]$.

Similar to \cite{ChenLiuHong2024}, we  derive a recursive relationship between $e_{k}$ and $e_{k+1}$:
\begin{align}\label{eqn:price_convergence:temp_1}
    \exptn\left[\left(p_{k}-p^*\right)^2\right] &\leq \exptn\left[\left(p_{k-1}+\eta_k \widehat{\nabla\Psi}(p_{k-1}) - p^*\right)^2 \right] \notag
    \\
    &= \exptn\left[\left(p_{k-1}-p^*\right)^2 + 2\eta_k\widehat{\nabla\Psi}(p_{k-1})\left(p_{k-1}-p^*\right) + \eta_k^2 \widehat{\nabla\Psi}(p_{k-1})^2 \right] \notag
    \\
    &= \exptn\left[\left(p_{k-1}-p^*\right)^2 + 2\eta_k \nabla\Psi(p_{k-1})(p_{k-1}-p^*)\right] \notag
    \\
    &\hspace{0.5cm} + \exptn\left[2\eta_k\left(\widehat{\nabla\Psi}(p_{k-1})-\nabla\Psi(p_{k-1})\right)(p_{k-1}-p^*)\right] + \exptn\left[\eta_k^2 \ \widehat{\nabla\Psi}(p_{k-1})^2\right].
\end{align}
We begin by analyzing the first term on the right-hand side of Equation~\eqref{eqn:price_convergence:temp_1}. By Assumption \ref{assumption:Psi},
\begin{align*}
    \nabla\Psi(p_{k-1})(p_{k-1}-p^*) \leq -K_0(p_{k-1}-p^*)^2.
\end{align*}
Therefore,
\begin{align}\label{eqn:price_convergence:temp_1:term_1}
    \exptn\left[\left(p_{k-1}-p^*\right)^2 + 2\eta_k \nabla\Psi(p_{k-1})(p_{k-1}-p^*)\right] \leq \left(1 - 2\eta_k K_0\right) \exptn\left[(p_{k-1}-p^*)^2\right].
\end{align}
We then analyze the second term on the right-hand side of Equation \eqref{eqn:price_convergence:temp_1}:
\begin{align}\label{eqn:price_convergence:temp_1:term_2}
    &\exptn\left[2\eta_k\left(\widehat{\nabla\Psi}(p_{k-1}) - \nabla\Psi(p_{k-1})\right)(p_{k-1}-p^*)\right] \notag
    \\
    &\hspace{2cm} = \exptn\left[\exptn\left[2\eta_k\left(\widehat{\nabla\Psi}(p_{k-1}) - \nabla\Psi(p_{k-1})\right)(p_{k-1}-p^*) \ \big\vert \mathcal{F}_{k-1}\right]\right] \notag
    \\
    &\hspace{2cm} = 2\eta_k\exptn\left[\exptn\left[\widehat{\nabla\Psi}(p_{k-1}) - \nabla\Psi(p_{k-1}) \big\vert \mathcal{F}_{k-1}\right](p_{k-1}-p^*)\right] \notag
    \\
    &\hspace{2cm} \leq 2\eta_k \exptn\left[\exptn\left[\widehat{\nabla\Psi}(p_{k-1}) - \nabla\Psi(p_{k-1}) \big\vert \mathcal{F}_{k-1}\right]^2\right]^{\frac{1}{2}} \exptn\left[(p_{k-1}-p^*)^2\right]^{\frac{1}{2}} \notag
    \\
    &\hspace{2cm} \leq \eta_k \exptn\left[\exptn\left[\widehat{\nabla\Psi}(p_{k-1}) - \nabla\Psi(p_{k-1}) \big\vert \mathcal{F}_{k-1}\right]^2\right]^{\frac{1}{2}} \left(1 + \exptn\left[(p_{k-1}-p^*)^2\right]\right),
\end{align}
where the second last inequality follows by the Cauchy-Schwartz inequality while the last inequality holds because $2a \leq 1+a^2$.
Substituting \eqref{eqn:price_convergence:temp_1:term_1} and \eqref{eqn:price_convergence:temp_1:term_2} into \eqref{eqn:price_convergence:temp_1} yields
\begin{align*}
    \exptn\left[\left(p_{k}-p^*\right)^2\right] \leq &\left(1-2\eta_k K_0\right)\exptn\left[(p_{k-1}-p^*)^2\right]  + \eta_k^2 \exptn\left[\widehat{\nabla\Psi}(p_{k-1})^2\right] 
    \\
    &\hspace{0.5 cm} + \eta_k \exptn\left[\exptn\left[\widehat{\nabla\Psi}(p_{k-1}) - \nabla\Psi(p_{k-1}) \big\vert \mathcal{F}_{k-1}\right]^2\right]^{\frac{1}{2}} \left(1 + \exptn\left[(p_{k-1}-p^*)^2\right]\right) 
    \\
    = &\left(1 - 2\eta_k K_0 + \eta_k \exptn\left[\exptn\left[\widehat{\nabla\Psi}(p_{k-1}) - \nabla\Psi(p_{k-1}) \big\vert \mathcal{F}_{k-1}\right]^2\right]^{\frac{1}{2}}\right)\exptn\left[(p_{k-1}-p^*)^2\right]
    \\
    &\hspace{0.5 cm} + \eta_k \exptn\left[\exptn\left[\widehat{\nabla\Psi}(p_{k-1}) - \nabla\Psi(p_{k-1}) \big\vert \mathcal{F}_{k-1}\right]^2\right]^{\frac{1}{2}} + \eta_k^2 \exptn\left[\widehat{\nabla\Psi}(p_{k-1})^2\right],
\end{align*}
i.e.,
\begin{align*}
    e_{k+1} \leq \left(1 - 2\eta_k K_0 + \eta_k B_k \right)e_k + \eta_k B_k + \eta_k^2 \nu_k.
\end{align*}
Once we have established this recursion, the second part of the proof relies on assumptions (a) and (b) of Theorem \ref{thm:price_convergence_rate}. 
We refer the reader to the rest of the proof in \cite{ChenLiuHong2024}.

\section{Formulae for Example~\ref{example: poly_H}} \label{appendix:formulae:alternative_joining_probability}

In this appendix, we provide expressions analogous to Equations \eqref{eqn:F_p(l;w)}, \eqref{eqn:inverse:F_p(zeta;w)} and \eqref{eqn:price der_inverse_F_p(zeta;w)}, but now for the joining probability function
$H(p,V) = \bigl(1 + \theta_1 p^2 + \theta_2 V^2\bigr)^{-1}$:
\begin{align*} 
    F_p(\ell;w) &= 
    \begin{cases}
        - \exp\biggl(-\frac{\Lambda}{\sqrt{\theta_2} \sqrt{1 + \theta_1 p^2}} \biggl\{ \arctan\biggl(\frac{\sqrt{\theta_2}w}{\sqrt{1+\theta_1 p^2}}\biggr) - \arctan\biggl(\frac{\sqrt{\theta_2}(w-\ell)}{\sqrt{1+\theta_1 p^2}}\biggr) \biggr\}  \biggr), 
        & \ell < w, \\[1.2ex]
        1 - \exp\biggl(-\Lambda\biggl\{\frac{1}{\sqrt{\theta_2} \sqrt{1 + \theta_1 p^2}} \arctan\biggl(\frac{\sqrt{\theta_2}w}{\sqrt{1+\theta_1 p^2}}\biggr) + \frac{\ell-w}{1+\theta_1 p^2} \biggr\}  \biggr), 
        & \ell \geq w ,
    \end{cases}\\
    F^{-1}_p(\zeta;w) &=
    \begin{cases}
        w - \frac{1+\theta_1 p^2}{\theta_2} \tan\Biggl(\arctan\biggl(\frac{\sqrt{\theta_2}w}{\sqrt{1 + \theta_1 p^2}} + \frac{\sqrt{\theta_2}\sqrt{1+\theta_1 p^2}}{\Lambda} \log(1-\zeta) \biggr) \Biggr), 
        & \zeta < F_p(w;w), \\[1.2ex]
        w - \frac{1 + \theta_1 p^2}{\Lambda} \log(1-\zeta) - \frac{\sqrt{1+\theta_1p^2}}{\sqrt{\theta_2}} \arctan\biggl(\frac{\sqrt{\theta_2}w}{\sqrt{1+\theta_1 p^2}}\biggr), 
        & \zeta \geq F_p(w;w) ,
    \end{cases}\\
    \nabla_{p} F^{-1}_p(\zeta;w) &=
    \begin{cases}
        \begin{alignedat}{2}
        &\nabla_p w 
        - \frac{\theta_1 p}{\sqrt{\theta_2}\sqrt{1+\theta_1 p^2}}\tan(\vartheta) - \sec^2(\vartheta) \times  \\
        &\quad \Biggl(
        \frac{1+\theta_1 p^2}{1+\theta_1p^2 +\theta_2w^2} \nabla_p w
        - \frac{\theta_1 p w}{1 + \theta_1p^2 + \theta_2 w^2}
        + \frac{\theta_1 p}{\Lambda} \log(1-\zeta) \Biggr),
        &\qquad & \zeta < F_p(w;w),
        \\[1.2ex]
        &\frac{\theta_2 w^2}{1+\theta_1 p^2+\theta_2 w^2} \nabla_p w
        - \frac{2\theta_1 p}{\Lambda}\log(1-\zeta) + \frac{\theta_1 p w}{1 + \theta_1 p^2 + \theta_2 w^2}
        \\
        &\quad
        - \frac{\theta_1 p}{\sqrt{\theta_2}\sqrt{1+\theta_1 p^2}}
          \arctan\Biggl(\frac{\sqrt{\theta_2}w}{\sqrt{1+\theta_1p^2}}\Biggr),
        &\qquad & \zeta \ge F_p(w;w),
        \end{alignedat}
    \end{cases}
\end{align*}
where
\begin{align*}
    \vartheta := \arctan\Biggl(\frac{\sqrt{\theta_2}w}{\sqrt{1+\theta_1 p^2}}\Biggr) + \frac{\sqrt{\theta_2} \sqrt{1+\theta_1 p^2}}{\Lambda} \log(1-\zeta).
\end{align*}

\end{document}